\definecolor{mblue}{rgb}{0,0,.8}
\newcommand{\N}{\mathbb N}
\newcommand{\Z}{\mathbb Z}
\newcommand{\Q}{\mathbb Q}
\newcommand{\F}{\mathbb F}
\newcommand{\R}{\mathbb R}
\newcommand\U[0]{\mathcal{U}}
\newcommand\T[0]{\mathcal{T}}
\newcommand\HH[0]{\mathcal{H}}
\newcommand\sh[1]{\mathcal{#1}}
\newcommand\mor[3][]{#2 \xrightarrow{#1} #3}
\newcommand\es[0]{e^\ast}
\newcommand\Es[0]{E^\ast}
\newtheorem{thm}{Theorem}[section]
\newtheorem{lem}[thm]{Lemma}
\newtheorem{remark}[thm]{Remark}
\newtheorem{dfn}[thm]{Definition}
\newtheorem{prop}[thm]{Proposition}
\newtheorem{cor}[thm]{Corollary}
\newtheorem{condition}{Condition}
\newtheorem*{theorem}{Theorem}
\newtheorem{thmx}{Theorem}
\newcounter{tmp}
\newtheorem{thmy}{Proposition}
\DeclareMathOperator{\GL}{GL} \DeclareMathOperator{\SL}{SL}
\begin{document}

\title[Eisenstein series and overconvergence]{Eisenstein series, $p$-adic modular functions, and overconvergence}
\author[Ian Kiming, Nadim Rustom]{Ian Kiming, Nadim Rustom}
\address[Ian Kiming]{Department of Mathematical Sciences, University of Copenhagen, Universitetsparken 5, DK-2100 Copenhagen \O ,
Denmark.}
\email{kiming@math.ku.dk}
\address[Nadim Rustom]{Tryg Forsikring A/S, Klausdalsbrovej 601, 2750 Ballerup, Denmark}
\email{restom.nadim@gmail.com}

\subjclass[2010]{11F33, 11F85}
\keywords{}


\begin{abstract} Let $p$ be a prime $\ge 5$. We establish explicit rates of overconvergence for some members of the ``Eisenstein family'', notably for the $p$-adic modular function $V(E_{(1,0)}^{\ast})/E_{(1,0)}^{\ast}$ ($V$ the $p$-adic Frobenius operator) that plays a pi\-votal role in Coleman's theory of $p$-adic families of modular forms. The proof goes via an in-depth analysis of rates of overconvergence of $p$-adic modular functions of form $V(E_k)/E_k$ where $E_k$ is the classical Eisenstein series of level $1$ and weight $k$ divisible by $p-1$. Under certain conditions, we extend the latter result to a vast generalization of a theorem of Coleman--Wan regarding the rate of overconvergence of $V(E_{p-1})/E_{p-1}$. We also comment on previous results in the literature. These include applications of our results for the primes $5$ and $7$.
\end{abstract}

\maketitle

\section{Introduction}\label{sec:intro} Everywhere in what follows, $p$ will denote a fixed prime $\ge 5$. Also, $K$ will denote a finite extension of $\Q_p$ with ring of integers $O$. By $v_p$ we denote the $p$-adic valuation of $\overline{\Q}_p$, normalized so that $v_p(p)=1$.
\smallskip

A major part of the motivation behind this paper is to be found in the following quote from Coleman and Mazur's paper \cite{coleman_mazur_eigencurve} constructing the eigencurve: ``For fuller control of the geometry of the eigencurve it would be useful to know explicit affinoid regions ... on which some, or all of the functions $E_l(q)$ converge." (Cf.\ \cite[p.\ 43]{coleman_mazur_eigencurve}.) Let us explain in detail:

The functions mentioned in the quote are functions in the so-called Eisenstein family and are $p$-adic modular functions constructed from $p$-adic Eisenstein series. The principal question is to establish explicit rates of overconvergence for them.

Let $s\in\N$. Then
$$
\Es_{(s,0)}(q) = 1 - \frac{2s}{B_{s,\tau^{-s}}} \sum_{n=1}^{\infty} \left( \sum_{\substack{d\mid n \\ p\nmid d}} d^{s-1} \tau(d)^{-s} \right) q^n
$$
where $\tau$ is the composition of reduction modulo $p$ and the Teichm\"uller character and $B_{s,\tau^{-s}}$ is a generalized Bernoulli number, is the $q$-expansion of a classical modular form of weight $s$ on $\Gamma_0(p)$ with nebentypus $\tau^{-s}$. These Eisenstein series are normalized versions of more general $p$-adic Eisenstein series $G_{(s,i)}^{\ast}$ where $s$ more generally is allowed to be a $p$-adic integer and $i\in\Z/\Z(p-1)$, as constructed by Serre, cf.\ \cite[1.6]{serre_zeta}.

From the $\Es_{(s,0)}$ we can form the $p$-adic modular functions
$$
\frac{V(\Es_{(s,0)})}{\Es_{(s,0)}}
$$
where $V$ denotes the $p$-adic Frobenius operator, acting on $q$-expansions as $q\mapsto q^p$. These functions are specializations to integral weights of the ``Eisenstein family'', cf.\ \cite{coleman_mazur_eigencurve}, \cite{coleman_eisenstein}.

The function $\mathcal{E} = V(\Es_{(1,0)})/\Es_{(1,0)}$ is particularly interesting and plays a pivotal role in Coleman's fundamental paper \cite{coleman_banach} that -- among other things -- establishes the existence of $p$-adic analytic families of eigenforms passing through classical eigenforms of finite $p$-slope. The reason for this is that the action of Atkin's $U$-operator on forms of some weight $k$ via ``Coleman's trick'' is conjugate to the action of a ``twisted '' $U$-operator, $U_{(k)}(\cdot) := U(\mathcal{E}^k \cdot)$ acting on overconvergent modular functions. For this to work, it is necessary to know that $\mathcal{E}$ is overconvergent with some positive rate of overconvergence, and Coleman proves that. He does not give an explicit positive number $\rho$ such that the rate of overconvergence is $\ge \rho$.

However, such explicit information about the rate of overconvergence of $\mathcal{E}$ or other closely related functions seems to be the best tool that we have for the more detailed study of the geometry of the Coleman--Mazur eigencurve (\cite{coleman_mazur_eigencurve}): cf.\ the papers \cite{emerton_2-adic}, \cite{smithline_thesis}, \cite{cst}, \cite{buzzard_kilford_2-adic}, \cite{roe_3-adic} each of which is concerned with either the prime $2$ or $3$. In \cite{coleman_eisenstein} Coleman formulates a general conjecture on ``analytic continuation'' of the Eisenstein family and proves that conjecture for the primes $2$ and $3$ as a consequence of central statements of the papers \cite{buzzard_kilford_2-adic} and \cite{roe_3-adic} respectively. These central results are precisely statements about rates of overconvergence for modular functions as in Theorem \ref{thm:es} where however the fixed prime is $2$ or $3$. See section \ref{sec:literature} below for a summary of these results as reformulated in our language. Section \ref{sec:literature} will also sketch some sample applications of our results for the primes $5$ and $7$.

We are not aware of any explicit information in the literature about the rate of overconvergence of the function $\mathcal{E}$ if $p$ is a prime $\ge 5$. The principal aim of this paper is to provide such information, valid for any prime $p\ge 5$.

In order to formulate our results, let us introduce some notation. Our (tame) level will (almost) always be $1$ and so reference to the level will be dropped from (almost) all of our notation. If $k$ is a non-negative integer and $r\in O$, denote by $M_k(\cdot,r)$ with $\cdot = O$ or $K$ the $O$-module or $K$-vector space of $r$-overconvergent modular forms of weight $k$ and tame level $1$, holomorphic at $\infty$. Thus, $M_k(K,1)$ can be identified with Serre $p$-adic modular forms of weight $k$ (\cite{serre_zeta}). To formulate our results we have found it convenient to introduce the $O$-module
$$
M_k(O,\ge \rho)
$$
for $\rho \in \Q \cap [0,1]$ as the $O$-module consisting of forms $f$ such that $f\in M_k(O,r)$ for some $r$ and such that if
$$
f = \sum_{i=0}^{\infty} \frac{b_i}{E_{p-1}^i}
$$
is a ``Katz expansion'' of $f$ then we have $v_p(b_i) \ge \rho i$ for all $i$. Alternatively, we have $f\in M_k(O,\ge \rho)$ if, whenever $K'/K$ is a finite extension with ring of integers $O'$ and $r'\in O'$ is such that $0\le v_p(r') < \rho$, then $f\in M_k(O',r')$. For the formal introduction of ``Katz expansions'' and this definition, see the beginning of section \ref{sec:topology_overconv_mod_fcts} below.

Our first theorem is now the following.

\begin{thmx}\label{thm:es} Let $s\in\N$. Then:
$$
\frac{V(E_{(s,0)}^{\ast})}{E_{(s,0)}^{\ast}} \in \frac{1}{p} M_0\left( \Z_p,\ge \frac{1}{p+1} \right) ,
$$
and also
$$
\frac{V(E_{(s,0)}^{\ast})}{E_{(s,0)}^{\ast}} \in M_0\left( \Z_p,\ge \frac{2}{3} \cdot \frac{1}{p+1} \right) .
$$
\end{thmx}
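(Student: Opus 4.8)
The plan is to reduce the statement, for a general $s\in\N$, to a single estimate for the classical level-$1$ Eisenstein series $E_k$ of a weight $k$ divisible by $p-1$, valid uniformly in $k$, and then to prove that estimate via the Katz expansion of $V(E_k)/E_k$. For the reduction, for each $n$ choose by the Chinese Remainder Theorem a positive integer $k_n$ with $k_n\equiv 0\pmod{p-1}$ and $k_n\equiv s\pmod{p^n}$. Since $(p-1)\mid k_n$, $\Es_{(k_n,0)}$ is the normalised $p$-stabilisation of $E_{k_n}$, which converges $p$-adically to the same limit as $E_{k_n}$ itself because $p^{k_n-1}\to 0$; hence, using the $p$-adic continuity of Serre's Eisenstein family (\cite{serre_zeta}), $\Es_{(s,0)}=\lim_n E_{k_n}$. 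By the von Staudt--Clausen theorem $v_p(2k_n/B_{k_n})=1+v_p(k_n)\ge 1$, so $E_{k_n}\equiv 1\pmod p$ and hence $\Es_{(s,0)}\equiv 1\pmod p$; all these forms are thus invertible $p$-adic modular forms and $V(\Es_{(s,0)})/\Es_{(s,0)}=\lim_n V(E_{k_n})/E_{k_n}$. Since $\tfrac1p M_0(\Z_p,\ge\rho)$ and $M_0(\Z_p,\ge\rho)$ are closed in the space of $p$-adic modular functions, it suffices to prove, with constants independent of $k$, that $V(E_k)/E_k\in\tfrac1p M_0(\Z_p,\ge\tfrac1{p+1})$ and $V(E_k)/E_k\in M_0(\Z_p,\ge\tfrac23\cdot\tfrac1{p+1})$ for every $k$ divisible by $p-1$.

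Fix such a $k$ and set $h_k:=V(E_k)/E_k$. Writing $E_k=1+p^{1+v_p(k)}r_k$ with $r_k$ of weight $k$ and $p$-integral $q$-expansion, we get $h_k\equiv 1\pmod p$, $E_k^{-1}=\sum_{j\ge 0}(-p^{1+v_p(k)}r_k)^{\,j}$, and $h_k$ admits a Katz expansion $h_k=1+\sum_{i\ge 1}b_i/E_{p-1}^i$ with $b_i$ in a fixed integral complement $W_{i(p-1)}$ of $E_{p-1}M_{(i-1)(p-1)}(\Z_p)$ in $M_{i(p-1)}(\Z_p)$. The plan is to extract from the recursion defining this expansion -- obtained by clearing denominators, expanding $V(E_k)$ and $E_k^{-1}$ as $q$-series, and projecting step by step onto the $W_{i(p-1)}$ along multiplication by $E_{p-1}$ -- a lower bound of the shape $v_p(b_i)\ge\tfrac{i}{p+1}-1$ for all $i\ge 1$. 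This is exactly equivalent to $p\,h_k\in M_0(\Z_p,\ge\tfrac1{p+1})$, which gives the first assertion. The second follows by combining this linear bound with the integrality $v_p(b_i)\ge 0$ and the finitely many sharper divisibilities ($v_p(b_i)\ge 1$, $\ge 2$, $\dots$ for small $i$) that the recursion also forces -- needed precisely in the range $i<3(p+1)$, where the linear bound is weaker than $\tfrac{2i}{3(p+1)}$ -- so that $v_p(b_i)\ge\tfrac{2i}{3(p+1)}$ holds for every $i$ and one unwinds the definition of $M_0(\Z_p,\ge\tfrac23\cdot\tfrac1{p+1})$.

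The main obstacle is the linear bound $v_p(b_i)\gtrsim\tfrac{i}{p+1}$ -- in particular that the slope is exactly $\tfrac1{p+1}$, the canonical-subgroup threshold below which Frobenius multiplies the Hasse valuation by $p$, so that $V$ preserves overconvergence on the tube $\{v_p(E_{p-1})<\tfrac1{p+1}\}$ but, in general, no further. Turning this heuristic into a proof forces a careful study of the operator $f\mapsto V(f)$ and of division by $E_{p-1}$ purely in terms of $q$-expansions, which I expect to be governed by Kummer-type congruences for the Bernoulli numbers $B_k$ and by the arithmetic of the divisor sums $\sigma_{k-1}(n)$. For $k=p-1$ this specialises to the theorem of Coleman--Wan, whose proof -- a Newton-polygon computation for a power series attached to $E_{p-1}(q)/E_{p-1}(q^p)$ -- is the template I would generalise; the essential new difficulty is to make all the estimates uniform as $k$ ranges over the multiples of $p-1$.
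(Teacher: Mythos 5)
Your reduction of the statement to a uniform estimate for the classical $E_k$ with $(p-1)\mid k$ is exactly the paper's route: one picks even $k_m\ge 4$ with $k_m\equiv 0\pmod{p-1}$ and $k_m\to s$ in $\Z_p$, notes $V(E_{k_m})/E_{k_m}\to V(\Es_{(s,0)})/\Es_{(s,0)}$ in $q$-expansions, and invokes the closedness of $M_0(\Z_p,\ge\rho)$ (Proposition \ref{prop:conv_in_basis_closedness}(ii)). That part is fine. The gap is that the uniform estimate itself --- which is precisely Theorem \ref{thm:V(E)/E_oc} and carries essentially all of the content of the theorem --- is not proved. You state it as a ``plan'' to extract the bound $v_p(b_i)\ge \tfrac{i}{p+1}-1$ from a recursion on Katz coefficients governed by Kummer-type congruences, and you yourself flag the uniformity in $k$ as an ``essential new difficulty.'' There is no evidence this direct $q$-expansion attack closes: the paper's numerical example ($p=5$, $k=24$) shows the naive Coleman--Wan bound genuinely fails for general $k$, and the actual proof does not proceed through $q$-expansion recursions at all.

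What the paper does instead is first prove the much stronger-looking statement that $e_n=E_{n(p-1)}/E_{p-1}^n$ lies in $\tfrac1p M_0(\Z_p,\ge\tfrac{p}{p+1})$ (Theorem \ref{thm:es_n_oc}). This requires Serre's theorem producing a Hecke polynomial $H_k$ projecting onto the Eisenstein component, twisted operators $\U_n$, $\T_{\ell,n}$ whose integrality rests on Coleman's trick, the Coleman--Wan theorem and Katz's integrality lemma, the congruence $e_n\equiv\es_n\equiv 1\pmod{p^2}$ from von Staudt--Clausen, and a test-object/canonical-subgroup analysis of $V$ (Lemmas \ref{lem:r^kV(f)} and \ref{suppintlemma}). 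The bound for $V(E_k)/E_k$ is then obtained by transfer: writing $V(E_k)/E_k=(V(e_n)/e_n)\cdot(V(E_{p-1})/E_{p-1})^n$ and using that $V$ maps $M_0(O,r^p)$ into $M_0(O,r)$, which converts the radius $\tfrac{p}{p+1}$ for $e_n$ into $\tfrac{1}{p+1}$ for $V(E_k)/E_k$, with Coleman--Wan handling the $(V(E_{p-1})/E_{p-1})^n$ factor. In particular, your claim that the $\tfrac23\cdot\tfrac1{p+1}$ bound follows from ``finitely many sharper divisibilities'' of the Katz coefficients of $V(E_k)/E_k$ itself for $i<3(p+1)$ is unsubstantiated; in the paper the improvement to $\tfrac23$ happens at the level of $e_n$ (via $e_n\equiv 1\pmod{p^2}$) and is then pushed through Frobenius, not read off from congruences for $V(E_k)/E_k$. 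As it stands the proposal establishes only the (correct) approximation-and-closedness scaffolding and leaves the core unproved.
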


The way we approach this theorem is via a study of classical Eisenstein series. As usual, for an even integer $k\ge 4$ denote by $E_k$ the classical, normalized Eisenstein series of weight $k$ and level $1$:
$$
E_k(q) = 1 - \frac{2k}{B_k} \sum_{n=1}^{\infty} \sigma_{k-1}(n) q^n
$$
with, as usual, $\sigma_{k-1}$ the power-divisor function $\sigma_{k-1}(n) := \sum_{d\mid n} d^{k-1}$.

We shall only be concerned with these Eisenstein series for even integers $k\ge 4$ that are divisible by $p-1$. If one begins to ask questions about rate of overconvergence of a $p$-adic modular function of form $V(E_k)/E_k$, the only result in the literature for primes $p\ge 5$ seems to be the following theorem. We shall refer to the theorem as the ``Coleman--Wan'' theorem as it appears in Wan's paper \cite{wan}, but is there attributed to Coleman.

\begin{theorem} (Coleman--Wan, cf.\ \cite[Lemma 2.1]{wan}) The modular function
$$
\frac{V(E_{p-1})}{E_{p-1}}
$$
is a $1$-unit in $M_0(O,r)$ whenever $r\in O$ with $v_p(r) < \frac{1}{p+1}$.
\end{theorem}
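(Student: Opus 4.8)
The plan is to split the assertion into two parts: first, that $f:=V(E_{p-1})/E_{p-1}$ lies in $M_0(O,r)$ for every $r\in O$ with $v_p(r)<\frac{1}{p+1}$ — the overconvergence statement, with the threshold $\frac{1}{p+1}$ — and second, that $f$ is a $1$-unit in that ring. I would obtain the overconvergence from Katz's and Lubin's theory of the canonical subgroup, and the $1$-unit property from the Clausen--von Staudt congruence $E_{p-1}\equiv 1\pmod p$ together with the $q$-expansion principle for overconvergent forms.

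For the overconvergence, recall that $V$, acting on $q$-expansions by $q\mapsto q^p$, is computed geometrically by the map sending an elliptic curve $E$ to the quotient $E/C$ by its canonical subgroup $C=C(E)$; this is visible on the Tate curve, where $C(\mathrm{Tate}(q))=\mu_p$ and $\mathrm{Tate}(q)/\mu_p=\mathrm{Tate}(q^p)$. The key quantitative input is the Lubin--Katz estimate: $C(E)$ exists whenever $v_p(E_{p-1}(E))<\frac{p}{p+1}$, and if moreover $v_p(E_{p-1}(E))<\frac{1}{p+1}$ then $v_p(E_{p-1}(E/C(E)))=p\cdot v_p(E_{p-1}(E))$. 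Now fix $r\in O$ and put $t:=v_p(r)<\frac{1}{p+1}$. On the strict neighbourhood $\{v_p(E_{p-1})\le t\}$ of the ordinary locus the canonical subgroup exists, so $V(E_{p-1})$ is represented there by the honest rigid function $E\mapsto E_{p-1}(E/C(E))$, and since $v_p(E_{p-1}(E/C(E)))=p\cdot v_p(E_{p-1}(E))\le pt<\frac{p}{p+1}$ this function is regular; as $E_{p-1}$ is regular and nowhere vanishing on $\{v_p(E_{p-1})\le t\}$, the quotient $f$ is a regular rigid function there. Moreover
$$
v_p(f(E))=v_p(E_{p-1}(E/C(E)))-v_p(E_{p-1}(E))=(p-1)\cdot v_p(E_{p-1}(E))\ge 0
$$
for every $E$ in the region, so $f$ is power-bounded; hence $f\in M_0(O,r)$. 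Since this holds for all $t<\frac{1}{p+1}$, in the language of the present paper $f\in M_0(\Z_p,\ge\frac{1}{p+1})$.

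It remains to see that $f$ is a $1$-unit. By Clausen--von Staudt, $v_p(B_{p-1})=-1$, so $v_p\!\left(\frac{2(p-1)}{B_{p-1}}\right)=1$ and $E_{p-1}\in 1+p\,\Z_p[[q]]$; hence also $V(E_{p-1})=E_{p-1}(q^p)\in 1+p\,\Z_p[[q]]$, and therefore the $q$-expansion of $f$ is $\equiv 1\pmod p$. By the $q$-expansion principle for overconvergent modular forms, the reduction of $f$ as an element of $M_0(O,r)$ is thus $1$, so $f-1$ is topologically nilpotent; as $M_0(O,r)$ is $p$-adically complete, $f$ is invertible with reduction $1$, i.e.\ a $1$-unit. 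I expect the main obstacle to lie precisely in the Lubin--Katz estimate $v_p(E_{p-1}(E/C(E)))=p\cdot v_p(E_{p-1}(E))$ in the range $v_p(E_{p-1}(E))<\frac{1}{p+1}$ — this is exactly where the constant $\frac{1}{p+1}$ enters, everything else being formal — together with the bookkeeping needed to match the $q$-expansion operator $V$ with the geometric quotient-by-canonical-subgroup map and to identify the power-bounded rigid functions on $\{v_p(E_{p-1})\le t\}$ with $M_0(O,r)$. One could instead try the more computational route of writing down a Katz expansion of $V(E_{p-1})$ and reading off the $p$-adic valuations of its coefficients, but transporting $V(E_{p-1})$ from $\Gamma_0(p)$ down to level $1$ again passes through the canonical subgroup, so this does not circumvent the main input.
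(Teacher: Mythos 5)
The paper does not actually prove this statement: it is quoted from Wan (attributed to Coleman) and used as a black box, so there is no internal proof to compare with. Your sketch is, in outline, the standard argument and it is sound. The decisive input you isolate --- that $v_p(E_{p-1}(E/C))=p\cdot v_p(E_{p-1}(E))$ once $v_p(E_{p-1}(E))<\frac{1}{p+1}$ --- is exactly Katz's equation (3.9.1), i.e.\ $E_{p-1}(\sh{E}',\omega')=E_{p-1}(\sh{E},\omega)^p+\frac{p}{r}h$ with $h\in R$, which is the same estimate this paper deploys in the proof of Lemma~\ref{suppintlemma}(ii). Combining it with the modular description of $V$ from Lemma~\ref{lem:r^kV(f)} and the relation $Y\cdot E_{p-1}=r$ gives, on an $r$-test object,
$$
\frac{V(E_{p-1})}{E_{p-1}}(\sh{E},\omega,Y)=\left(\frac{E_{p-1}(\sh{E},\omega)}{\lambda(\sh{E},\omega)}\right)^{p-1}\left(1+\frac{p}{r^{p+1}}\,h\,Y^p\right),
$$
and $v_p\!\left(p/r^{p+1}\right)=1-(p+1)v_p(r)>0$ is precisely where the threshold $\frac{1}{p+1}$ enters; this single formula yields both membership in $M_0(O,r)$ and the $1$-unit property. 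Two points in your write-up deserve care. First, ``$E\mapsto E_{p-1}(E/C)$'' is not a function of test objects until a differential on $E/C$ is fixed; this is the source of the unit $(E_{p-1}/\lambda)^{p-1}$ above (and of the integrality issue for $V$ in positive weight that the paper flags after Lemma~\ref{lem:r^kV(f)}). Second, your route to the $1$-unit property via $E_{p-1}\equiv 1\pmod p$ and the $q$-expansion principle is slightly too quick: that congruence controls the Katz coefficients $b_i$ of $f-1$ only in the $p$-adic ($r=1$) sense ($v_p(b_i)\ge 1$), whereas being a $1$-unit of $M_0(O,r)$ in the sense used here means $f-1\in a\,M_0(O,r)$ with $v_p(a)>0$, i.e.\ $v_p(b_i)\ge v_p(a)+i\,v_p(r)$ for all $i$ --- smallness in the $r$-weighted norm. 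That stronger statement does hold, but it comes from the displayed formula (the error term carries the uniform factor $p/r^{p+1}$), not from the $q$-expansion congruence alone.
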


Here, a ``$1$-unit'' of $M_0(O,r)$ is an element of form $1+a\cdot f$ where $f\in M_0(O,r)$ and $a\in O$ with $v_p(a)>0$. Such an element is obviously invertible in the ring $M_0(O,r)$, and the inverse is again a $1$-unit. This notion will also be employed in slightly different situations in what follows, but the definition will be clear in each case.

We prove for general primes $p\ge 5$ the following theorem that may be of independent interest.

\begin{thmx}\label{thm:V(E)/E_oc} Let $k\ge 4$ be divisible by $p-1$. Then:
$$
\frac{V(E_k)}{E_k} \in \frac{1}{p} M_0\left( \Z_p,\ge \frac{1}{p+1} \right) ,
$$
and also
$$
\frac{V(E_k)}{E_k} \in M_0\left( \Z_p,\ge \frac{2}{3} \cdot \frac{1}{p+1} \right) .
$$
\end{thmx}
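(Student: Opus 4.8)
The plan is to compare $E_k$ with the power $E_{p-1}^{m}$, where $m:=k/(p-1)$, for which the desired statement is — up to bookkeeping — the quoted Coleman--Wan theorem, and to control the discrepancy by congruences for Bernoulli numbers. First one records the elementary points: since $(p-1)\mid k$, von Staudt--Clausen gives $v_p(B_k)=-1$, hence $v_p(2k/B_k)\geq 1$ and $E_k\equiv 1\pmod p$; thus $V(E_k)/E_k$ is a $1$-unit as a $p$-adic modular function, and more precisely $v_p\!\left(V(E_k)/E_k-1\right)\geq v_p(2k/B_k)=v_p(k)+1$, so that the Katz coefficients $b_i$ of $V(E_k)/E_k$ satisfy $v_p(b_i)\geq 1$ for all $i\geq 1$. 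Next, setting $u_k:=E_k/E_{p-1}^{m}$, a weight-$0$ $1$-unit, one has (using multiplicativity of $V$ on $q$-expansions) the identity
\begin{equation*}
\frac{V(E_k)}{E_k}\;=\;\left(\frac{V(E_{p-1})}{E_{p-1}}\right)^{m}\cdot\frac{V(u_k)}{u_k},
\end{equation*}
so by Coleman--Wan the whole problem reduces to controlling the rate of overconvergence of $V(u_k)/u_k$, and this must be done uniformly in $k$.

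The heart of the matter is therefore a quantitative understanding of $u_k$, that is, of how far $E_k$ is from $E_{p-1}^{m}$. Since $d^{k-1}\equiv d^{p-2}\pmod p$ for $p\nmid d$, one has $\sigma_{k-1}(n)\equiv\sigma_{p-2}(n)\pmod p$ for every $n$, so $E_k-E_{p-1}^{m}$ is a cusp form of weight $k$ whose $q$-expansion already has valuation governed by $v_p\!\left(\tfrac{2k}{B_k}-\tfrac{2k}{B_{p-1}}\right)=v_p(k)+v_p(B_k-B_{p-1})+2$; the finer Kummer-type congruences for the $B_k$ with $(p-1)\mid k$ — obtained from the Iwasawa-analyticity of $(s-1)\zeta_p(s)$ — then bound $v_p(B_k-B_{p-1})$ below in terms of $v_p(m-1)$. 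Translating this into an estimate for the Katz expansion of $u_k$, and for that of $V(u_k)$ (recalling that $V$ degrades overconvergence by a factor $p$), is carried out by analysing $E_k$ and $V(E_k)$ in neighbourhoods of the supersingular locus, where $E_{p-1}$ has a simple zero and $E_k$ reduces to the $m$-th power of the Hasse invariant; a Newton-polygon argument there locates the zeros of $E_k$, and hence the poles of $u_k^{-1}$. The naive single comparison with $E_{p-1}^{m}$ produces a rate that degrades with $m$, so the estimate has to be bootstrapped — comparing $E_k$ successively with $E_{p-1}\cdot E_{k-(p-1)}$ and inducting on the weight — so that the loss accumulated over the $m$ stages stays controlled; this is the technical crux.

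With the two factors in hand the statements are assembled: $(V(E_{p-1})/E_{p-1})^{m}$ lies in $M_0(\Z_p,\geq\tfrac1{p+1})$ by Coleman--Wan, and combining with the estimate for $V(u_k)/u_k$ puts $V(E_k)/E_k$ into $M_0(\Z_p,\geq\tfrac1{p+1})$ up to a bounded power of $p$ in the denominator. Sharpening ``bounded power'' to the factor $\tfrac1p$ of the first assertion and to the integral factor $\tfrac23$ of the second is then a finite explicit check on the low-index Katz coefficients $b_i$ with $i$ below a fixed multiple of $p+1$: for such $i$ one plays the trivial integrality bound $v_p(b_i)\geq 1$ against the linear bound $v_p(b_i)\geq\tfrac{i}{p+1}-1$ valid for large $i$, and it is precisely the range where neither is adequate on its own that forces the constant $\tfrac23$. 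The main obstacle throughout is the uniform-in-$k$ control of $u_k$ and $V(u_k)$ in the second paragraph; once the Bernoulli congruences are pushed far enough to beat the growth of $m=k/(p-1)$, everything else is either the quoted Coleman--Wan theorem or routine. Finally, Theorem~\ref{thm:es} is deduced by expressing $E_{(s,0)}^{\ast}$ in terms of the classical $E_k$ and $E_{p-1}$ and transporting the estimate.
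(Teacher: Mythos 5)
Your reduction is the right one, and it is the same as the paper's: with $m=k/(p-1)$ and $u_k=E_k/E_{p-1}^{m}$ (the paper's $e_m$), the identity $V(E_k)/E_k=\bigl(V(E_{p-1})/E_{p-1}\bigr)^{m}\cdot V(u_k)/u_k$ plus Coleman--Wan reduces everything to a rate of overconvergence for $u_k$, uniform in $k$. The genuine gap is that you never establish that rate, and the tools you propose cannot deliver it. Congruences of $q$-expansions (von Staudt--Clausen, Kummer-type congruences for the $B_k$) only yield $u_k\equiv 1\pmod{p^2}$, which by the $q$-expansion principle gives the \emph{constant} lower bound $v_p(b_i)\ge 2$ on the Katz coefficients of $u_k$; what is needed is the bound $v_p(b_i)\ge \frac{p}{p+1}\,i-1$, \emph{linear in $i$}, and no refinement of Bernoulli congruences produces a bound that grows with the Katz index. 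The paper obtains the linear bound by an entirely different mechanism: $\Es_{m(p-1)}/E_{p-1}^{m}$ is an eigenfunction of twisted operators $\U_m,\T_{\ell,m}$; Serre's theorem supplies a polynomial in these operators whose iterates applied to $1$ converge to it; Katz's integrality lemma combined with Coleman's trick (which itself uses Coleman--Wan) forces each application of $\U_m$ into $\frac1p M_0(\Z_p,\ge\frac{p}{p+1})$; and one passes from $\Es_{m(p-1)}$ back to $E_{m(p-1)}$ by controlling $p^{m(p-1)}V(f)/E_{p-1}^{m}$ for classical $f$. The $q$-expansion congruence mod $p^2$ enters only as an auxiliary input for the low-index coefficients (and is what ultimately produces the constant $\frac23$).

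Two further concrete problems. First, your bootstrap (``comparing $E_k$ successively with $E_{p-1}\cdot E_{k-(p-1)}$ and inducting on the weight'') must produce \emph{exactly one} global factor $\frac1p$, independent of $m$: the paper's example $p=5$, $k=24$ shows $e_6\notin M_0(\Z_5,\ge\frac56)$, so the factor $\frac1p$ is genuinely present, yet it must not accumulate over the $m$ stages of your induction, and your sketch offers no mechanism for this stabilization. Second, your concluding ``finite explicit check'' invokes the bound $v_p(b_i)\ge\frac{i}{p+1}-1$ for large $i$ as though already available, but that is precisely the hard statement the rest of the argument was supposed to supply; as written, the proposal assumes its main estimate at the point where it is used.
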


See Remark \ref{rem:V(E_k)/E_k_1_unit} in section \ref{sec:proofs} for a reformulation of the second part of the Theorem in terms of $1$-units.

In addition to the Eisenstein series $E_k$ we shall need their $p$-deprived counterparts:
$$
\Es_k := \frac{E_k - p^{k-1}V(E_k)}{1-p^{k-1}}
$$
($k$ even integer $\ge 4$.) Thus, $\Es_k$ is an Eisenstein series for $\Gamma_0(p)$, but we also have $\Es_k = \Es_{(k,0)}$ if $k\equiv 0\pmod{p-1}$. Thus, Theorem \ref{thm:es} includes a statement about the $p$-adic modular function $V(\Es_k)/\Es_k$ for $k\equiv 0\pmod{p-1}$.

\begin{dfn}\label{def:en_esn} For any $n\in\N$ define the $p$-adic modular functions
$$
e_n := \frac{E_{n(p-1)}}{E_{p-1}^n} \quad \mbox{and} \quad \es_n := \frac{\Es_{n(p-1)}}{E_{p-1}^n} .
$$
\end{dfn}

\begin{thmx}\label{thm:es_n_oc} Let $n\in\N$. We have
$$
e_n, \es_n \in \frac{1}{p} M_0\left( \Z_p,\ge \frac{p}{p+1} \right) ,
$$
and also
$$
e_n, \es_n \in M_0\left( \Z_p,\ge \frac{2}{3} \cdot \frac{p}{p+1} \right) .
$$
\end{thmx}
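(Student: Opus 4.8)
The plan is to deduce Theorem~\ref{thm:es_n_oc} from Theorems~\ref{thm:es} and~\ref{thm:V(E)/E_oc} by combining the multiplicativity of $V$ on $q$-expansions with the behaviour of Atkin's $U$ on the filtration $M_0(\Z_p,\ge\rho)$. Since $E_{p-1}^{n}e_n=E_{n(p-1)}$ and $E_{p-1}^{n}\es_n=\Es_{n(p-1)}=\Es_{(n(p-1),0)}$, applying $V$ and dividing yields
\[
\psi_n:=\frac{V(e_n)}{e_n}=\frac{V(E_{n(p-1)})/E_{n(p-1)}}{\bigl(V(E_{p-1})/E_{p-1}\bigr)^{n}},\qquad
\psi_n^{\ast}:=\frac{V(\es_n)}{\es_n}=\frac{V(\Es_{(n(p-1),0)})/\Es_{(n(p-1),0)}}{\bigl(V(E_{p-1})/E_{p-1}\bigr)^{n}}.
\]
By Theorem~\ref{thm:V(E)/E_oc}, resp.\ Theorem~\ref{thm:es} applied with $s=n(p-1)\in\N$, the numerator lies in $\tfrac1p M_0(\Z_p,\ge\tfrac1{p+1})$ and in $M_0(\Z_p,\ge\tfrac23\cdot\tfrac1{p+1})$, while by the Coleman--Wan theorem the denominator is a $1$-unit with $\bigl(V(E_{p-1})/E_{p-1}\bigr)^{n}-1\in M_0(\Z_p,\ge\tfrac1{p+1})$. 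Using that $M_0(\Z_p,\ge\rho)$ is multiplicative and that $1$-units there are invertible (geometric series), I would conclude
\[
\psi_n,\ \psi_n^{\ast}\ \in\ \tfrac1p M_0\!\left(\Z_p,\ge\tfrac1{p+1}\right)\ \cap\ M_0\!\left(\Z_p,\ge\tfrac23\cdot\tfrac1{p+1}\right);
\]
note that it is here that the extra sharpness of Coleman--Wan over Theorem~\ref{thm:V(E)/E_oc} for the weight $p-1$ is used, so that dividing by the denominator does not push the rate in the first membership below $\tfrac1{p+1}$.

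The linearisation step is to exploit $V(e_n)=\psi_n e_n$ together with $UV=\mathrm{id}$ on $q$-expansions, which gives the fixed-point identities
\[
e_n=U(\psi_n\, e_n),\qquad \es_n=U(\psi_n^{\ast}\,\es_n).
\]
Since $E_{n(p-1)}$, $\Es_{(n(p-1),0)}$ and $E_{p-1}^{\pm n}$ have $p$-integral $q$-expansions (von Staudt--Clausen, and $E_{p-1}\equiv1\bmod p$), $e_n$ and $\es_n$ are integral $p$-adic modular functions; by the standard fact that $U$ maps integral $p$-adic modular forms into overconvergent ones (recalled in Section~\ref{sec:topology_overconv_mod_fcts}), the identities above already show that $e_n,\es_n$ overconverge with \emph{some} positive rate $\rho_0>0$. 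I would then bootstrap with the rate-improvement property of $U$, namely $U\bigl(M_0(\Z_p,\ge\rho)\bigr)\subseteq M_0\bigl(\Z_p,\ge\min(p\rho,\tfrac p{p+1})\bigr)$ and its analogue for $\tfrac1p M_0(\Z_p,\ge\rho)$: feeding $\rho_0>0$ and $\psi_n\in M_0(\Z_p,\ge\tfrac23\cdot\tfrac1{p+1})$ into $e_n=U(\psi_n e_n)$ and iterating, an elementary case analysis (a positive rate below the ceiling must strictly increase at each pass) forces $e_n,\es_n\in M_0(\Z_p,\ge\tfrac23\cdot\tfrac p{p+1})$; one further pass, now with $\psi_n\in\tfrac1p M_0(\Z_p,\ge\tfrac1{p+1})$ and the rate $\tfrac23\cdot\tfrac p{p+1}$ just obtained, yields $e_n,\es_n\in\tfrac1p M_0(\Z_p,\ge\tfrac p{p+1})$. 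The case $n=1$ (where $e_1=1$) is trivial, and $\es_1$ is in any case covered by the general argument.

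The step I expect to be the main obstacle is supplying the $U$-side input with the correct constants: the rate-improvement law with ceiling exactly $\tfrac p{p+1}$ --- which is where the constant $\tfrac p{p+1}$ in the theorem comes from, and which rests on the geometry of the supersingular residue discs and the canonical subgroup --- together with the lemma that $U$ turns integral $p$-adic modular forms into overconvergent ones. The remaining ingredients (multiplicativity of the filtration, inversion of $1$-units, and the bookkeeping reconciling the $\tfrac1p$-denominators with the $\tfrac23$-factors) are routine once that theory is in place.
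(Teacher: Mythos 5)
Your reduction is circular within the logical structure of the paper: Theorems \ref{thm:es} and \ref{thm:V(E)/E_oc}, which you take as inputs, are themselves deduced in section \ref{sec:proofs} \emph{from} Theorem \ref{thm:es_n_oc}. The paper proves \ref{thm:es_n_oc} first, then obtains \ref{thm:V(E)/E_oc} via the identity $V(E_k)/E_k=\bigl(V(e_n)/e_n\bigr)\cdot\bigl(V(E_{p-1})/E_{p-1}\bigr)^n$ (your identity read in the opposite direction), and finally \ref{thm:es} by a $p$-adic limit over classical weights. So the proposed deduction cannot serve as a proof of \ref{thm:es_n_oc}. The paper's actual proof is direct: it uses Serre's theorem (Th\'eor\`eme 8 of \cite{serre_zeta}) giving a polynomial $H_{n(p-1)}$ in $U$ and the $T_\ell$ with $H_{n(p-1)}^i(E_{p-1}^n)\rightarrow\Es_{n(p-1)}$, the twisted operators $\U_n$, $\T_{\ell,n}$ and their integrality properties (Lemma \ref{integrality_lemma}, resting on Katz's Integrality Lemma and on Coleman--Wan), the congruence $e_n\equiv\es_n\equiv 1\pmod{p^2}$, and the closedness of $M_0(\Z_p,\ge\rho)$ under $q$-expansion limits; this yields $\es_n\in\U_n(1)+M_0\left(\Z_p,\ge\frac{p}{p+1}\right)$, and the statements for $e_n$ then follow by comparison with $\T_{p,n}(1)$ using Lemma \ref{suppintlemma}.

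Even taken on its own terms, your bootstrap has two gaps. First, the fixed-point identity $e_n=U(\psi_n e_n)$ does not ``already show'' a positive rate: $U$ does not carry arbitrary integral $p$-adic modular functions into overconvergent ones, and a priori $e_n$ only lies in $M_0(\Z_p,1)=M_0(\Z_p,\ge 0)$, so the iteration $\rho\mapsto p\rho$ starts, and stays, at $0$. Second, the rate-improvement law you invoke is not $U\bigl(M_0(\Z_p,\ge\rho)\bigr)\subseteq M_0\bigl(\Z_p,\ge\min(p\rho,\tfrac{p}{p+1})\bigr)$: Katz's Integrality Lemma, in the form used in Lemma \ref{integrality_lemma}(i), gives this only after multiplying by $p$, so each pass of your iteration costs a factor $\frac1p$ and the denominators accumulate. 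Indeed, the factor $\frac1p$ in the first statement of the theorem cannot in general be removed (see the numerical example with $E_{24}$ for $p=5$ in section \ref{subsec:numerical_examples}), which your scheme, if it worked, would contradict after one more pass. Passing from overconvergence of $V(e_n)/e_n$ to overconvergence of $e_n$ itself is the hard direction, and it is exactly what the paper's Hecke-theoretic argument is designed to accomplish.
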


Let us inform the reader at this point that the first statements of each of Theorems \ref{thm:es}, \ref{thm:V(E)/E_oc}, \ref{thm:es_n_oc} are the most precise parts of the theorems. The second statements follow ultimately from the first statement of Theorem \ref{thm:es_n_oc} in combination with the congruence $e_n \equiv 1\pmod{p^2}$ that we will using in arguments in section \ref{sec:classical_eisenstein} below.

Numerical examples in section \ref{subsec:numerical_examples} below will show that the factor $\frac{1}{p}$ in the first statements of Theorems \ref{thm:V(E)/E_oc}, \ref{thm:es_n_oc} can not in general be removed.

Noting this difference between Theorem \ref{thm:V(E)/E_oc} and the Coleman--Wan theorem, i.e., the presence of the fraction $\frac{1}{p}$ in Theorem \ref{thm:V(E)/E_oc}, this raises the question as to why the weight $k=p-1$ in the Coleman--Wan theorem is special. We may ask whether there is a class of weights for which we have a direct generalization of the Coleman--Wan theorem. For certain primes $p$ we have an answer to this question, and as a consequence also an improvement of Theorem \ref{thm:es} for certain $s$.

In order to formulate these final of our results, we will define a function that has been called the ``$p$-adic weight'' $\delta_p(n)$ of a number $n\in\N$: if $n$ has $p$-adic expansion
$$
n = \sum_{i\geq 0} a_i p^i
$$
with $a_i \in \{0, 1, \ldots, p-1\}$, we define
$$
\delta_p(n) := \sum_{i\geq 0} a_i .
$$

Consider now the following condition on $p$.

\begin{condition}\label{condition:e_n_for_small_n} We have
$$
e_n \in M_0\left(\Z_p , \geq \frac{p}{p+1}\right)
$$
for all $n \in \{1, 2, \ldots, p\}$.
\end{condition}

In section \ref{sec:comp} below we explain how Condition \ref{condition:e_n_for_small_n} can be checked computationally for a given prime $p$. We have verified:

\setcounter{tmp}{\value{thmx}}

\setcounter{thmy}{\thetmp}

\begin{thmy}\label{prop:comp} Condition \ref{condition:e_n_for_small_n} holds for $5\le p\le 97$.
\end{thmy}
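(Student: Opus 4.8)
The plan is to reduce Condition \ref{condition:e_n_for_small_n}, for each fixed prime $p$, to an explicitly bounded finite computation, and then to run that computation for every $p$ with $5\le p\le 97$ as outlined in section \ref{sec:comp}. The starting point is the congruence $e_n\equiv 1\pmod{p^2}$ proved in section \ref{sec:classical_eisenstein}. Since $E_{p-1}\in 1+p\Z_p[[q]]$ is a unit power series, this congruence rephrases as $E_{n(p-1)}-E_{p-1}^n\in p^2\Z_p[[q]]$, so
$$
e_n = 1 + p^2\,\frac{F_n}{E_{p-1}^n}, \qquad F_n := \frac{E_{n(p-1)} - E_{p-1}^n}{p^2},
$$
where $F_n$ is a \emph{classical} modular form of weight $n(p-1)$ over $\Z_p$. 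The point of passing to $F_n$ is that a classical form has a \emph{finite} Katz expansion: writing $B_j$ ($j\ge 1$) for the Katz complements of section \ref{sec:topology_overconv_mod_fcts}, so that $M_{j(p-1)}=B_j\oplus E_{p-1}M_{(j-1)(p-1)}$ over $\Z_p$, and $B_0=\Z_p$, an immediate induction on $n$ gives $M_{n(p-1)}=\bigoplus_{j=0}^{n}E_{p-1}^{n-j}B_j$. Hence $F_n=\sum_{j=0}^{n}E_{p-1}^{n-j}c_j$ with unique $c_j\in B_j$, and the Katz expansion of $e_n$ is
$$
e_n = b_0 + \sum_{j=1}^{n}\frac{b_j}{E_{p-1}^j}, \qquad b_0 = 1+p^2c_0,\quad b_j = p^2 c_j\ \ (1\le j\le n),
$$
terminating at index $i=n\le p$ (in particular $b_i=0$ for $i>n$).

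It follows that, for a fixed $p$, Condition \ref{condition:e_n_for_small_n} is \emph{equivalent} to the finite system of inequalities
$$
v_p(c_j)\ \ge\ \big\lceil \tfrac{p}{p+1}\,j\big\rceil - 2 \qquad\text{for all } 1\le j\le n \text{ and all } 1\le n\le p ,
$$
in which the cases $j\le 2$ are automatic because $b_j=p^2c_j$ is divisible by $p^2$, so that the first genuine constraint is $p\mid c_3$. The proposed proof of Proposition \ref{prop:comp} is to verify this system directly for every $p$ with $5\le p\le 97$: for each $n\le p$, compute the $q$-expansions of $E_{n(p-1)}$ and of $E_{p-1}$ to $q$-adic precision past the Sturm bound of $M_{n(p-1)}$ (which determines $F_n$ exactly) and to $p$-adic precision $p$ (so that $F_n$, and then each $c_j$, is known modulo $p^{p-2}$), form $F_n$, peel off the $c_j$ by the standard recursion against a Miller-type basis of each $B_j$ --- a recursion that loses no $p$-adic precision, since each step is an exact division by the unit power series $E_{p-1}$ --- and check the displayed bounds. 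In every case they hold, which is Proposition \ref{prop:comp}.

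The conceptual content of the argument is the reduction above; the practical obstacle is computational size and precision bookkeeping. The heaviest instance is $n=p$, where one works in $M_{p(p-1)}$, of dimension of order $p^2/12$, so the required $q$-adic precision grows quadratically in $p$ while the needed $p$-adic precision grows only linearly --- the right-hand side of the displayed system never exceeds $\lceil\tfrac{p}{p+1}p\rceil-2=p-2$, so knowing each $c_j$ modulo $p^{p-2}$ already certifies the relevant divisibility. This quadratic growth is what confines the verification to $p\le 97$. No input beyond Theorem \ref{thm:es_n_oc} and the congruence $e_n\equiv 1\pmod{p^2}$ is needed: Theorem \ref{thm:es_n_oc} already shows that only the boundedly many coefficients $b_0,\dots,b_n$ (with $n\le p$) could possibly fall short of the target bound $v_p(b_i)\ge\tfrac{p}{p+1}i$, and the congruence turns the remaining check into the finite, precision-controlled computation just described.
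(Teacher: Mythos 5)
Your proposal is correct and is essentially the paper's own argument: reduce Condition \ref{condition:e_n_for_small_n} to checking the finitely many valuation bounds $v_p(b_i)\ge \frac{p}{p+1}i$ in the finite Katz expansion of $e_n$ (Proposition \ref{prop:katz_exp_classical_form}), computed against Miller-type bases for the complements $B_i(\Z_p)$, and verify by machine for $5\le p\le 97$. The only addition is your explicit precision bookkeeping via $e_n\equiv 1\pmod{p^2}$, which is a sound refinement of the same computation rather than a different route.
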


For primes $p$ such that Condition \ref{condition:e_n_for_small_n} is true we have the following vast generalization of the Coleman--Wan theorem above as well as an improvement of Theorem \ref{thm:es} for certain $s$, including $s=1$:

\stepcounter{thmx}

\begin{thmx}\label{thm:e_n_oc_conditional} Assume that Condition \ref{condition:e_n_for_small_n} is true for $p$. Then we have
$$
e_n \in M_0\left(\Z_p , \geq \frac{p}{p+1}\right)
$$
for all $n\in\N$ such that $\delta_p(n(p-1)) = p-1$.

As consequences we have the following. For any integer $k\ge 4$ with $k\equiv 0 \pmod{p-1}$ and $\delta_p(k) = p-1$ we have
$$
\frac{V(E_k)}{E_k} \in M_0\left( \Z_p,\ge \frac{1}{p+1} \right) .
$$

If $s\in\N$ with $\delta_p(s) < p-1$ then
$$
\frac{V(\Es_{(s,0)})}{\Es_{(s,0)}} \in M_0\left( \Z_p,\ge \frac{1}{p+1} \right) .
$$
\end{thmx}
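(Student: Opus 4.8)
The plan is to locate all the substance in the first assertion and to derive the other two from it essentially formally. Granting that $e_n\in M_0\left(\Z_p,\ge\frac{p}{p+1}\right)$ whenever $\delta_p(n(p-1))=p-1$, the statement about $V(E_k)/E_k$ follows from the factorisation
$$
\frac{V(E_k)}{E_k}=\left(\frac{V(E_{p-1})}{E_{p-1}}\right)^{k/(p-1)}\cdot\frac{V(e_{k/(p-1)})}{e_{k/(p-1)}},
$$
which holds because $E_k=E_{p-1}^{k/(p-1)}e_{k/(p-1)}$ and $V$ is a ring homomorphism: by Coleman--Wan the first factor is a $1$-unit of $M_0\left(\Z_p,\ge\frac1{p+1}\right)$; and since $V$ divides the rate of overconvergence by $p$, so that $\frac{p}{p+1}$ is turned into $\frac1{p+1}$, while $e_{k/(p-1)}\equiv 1\pmod{p^2}$ forces (by optimality of the Katz expansion) the Katz coefficients of $e_{k/(p-1)}-1$ to be divisible by $p^2$, the first assertion makes the second factor a $1$-unit of $M_0\left(\Z_p,\ge\frac1{p+1}\right)$ as well; a product of such $1$-units stays in that ring. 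The third assertion is obtained by the same mechanism applied after multiplying $\Es_{(s,0)}$ by a companion Eisenstein series chosen to trivialise the nebentypus and to push the weight into the range governed by the first assertion and its $\es_n$-counterpart (which is proven in the same way); the hypothesis $\delta_p(s)<p-1$, equivalently $(p-1)-\delta_p(s)\ge 1$, is exactly the room needed to place the companion's digits in fresh positions so that the digit addition carries nothing, after which the corresponding quotient for the companion is $\equiv 1$ modulo a large power of $p$ and may be discarded.

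It remains to prove the first assertion. The first step is to reduce to $n$ with $p\nmid n$: multiplying $n$ by $p$ merely shifts the base-$p$ digits of $n(p-1)$, so the set of admissible $n$ is stable under $n\mapsto pn$ and $n\mapsto n/p$, and, using either the congruence between $E_{pk}$ and $E_k$ modulo large powers of $p$ or the operator $V$, one checks that $e_{pn}$ lies in $M_0\left(\Z_p,\ge\frac{p}{p+1}\right)$ if and only if $e_n$ does. One then argues by induction on admissible $n$ with $p\nmid n$, the base cases $1\le n\le p$ being exactly Condition \ref{condition:e_n_for_small_n}.

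For the inductive step, write the base-$p$ expansion $n(p-1)=\sum_{j=1}^{r}c_jp^{i_j}$ with $i_1>\dots>i_r\ge 0$ and $c_1+\dots+c_r=p-1$; this decomposition is the content of $\delta_p(n(p-1))=p-1$, and $r\ge 2$ once $p\nmid n$ and $n>p$. I would compare $E_{n(p-1)}$ with a product of Eisenstein series attached to the digit blocks $c_jp^{i_j}$---grouping adjacent blocks, or passing to the Serre $p$-adic Eisenstein series $G^{\ast}_{(\cdot,\cdot)}$, so as to avoid odd weights---each factor of which, after a power-of-$p$ weight shift, falls in the range already controlled by Condition \ref{condition:e_n_for_small_n} and the induction hypothesis. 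Multiplying these sharp estimates, using that $M_0\left(\Z_p,\ge\frac{p}{p+1}\right)$ is a ring, and adding in the error between $E_{n(p-1)}$ and the product would then yield the claim.

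The expected main obstacle is exactly this last error estimate: showing that the difference between $E_{n(p-1)}$ and the product of digit-block Eisenstein series overconverges to the sharp rate $\ge\frac{p}{p+1}$, rather than merely to the $\frac1p$-weakened rate of Theorem \ref{thm:es_n_oc}. Coefficient by coefficient, this difference is built from differences of (generalised) Bernoulli numbers, whose $p$-adic valuations one would estimate via the Kummer congruences and the von Staudt--Clausen theorem; the hypothesis $\delta_p(n(p-1))=p-1$---the smallest digit sum a positive multiple of $p-1$ can have---is precisely the assertion that the relevant digit additions produce no carries, and the work is to turn this into the extra power of $p$ that cancels the loss. Concretely one must pinpoint where the factor $\frac1p$ enters the unconditional argument (ultimately the pole $v_p(B_{p-1})=-1$ of von Staudt--Clausen and its propagation through the Katz expansion of $E_{n(p-1)}$) and verify that under the no-carries hypothesis it is offset exactly, with Condition \ref{condition:e_n_for_small_n} supplying the sharp control on the low ($\le p$) Katz coefficients on which the digit-block comparison rests. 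The parity of the blocks $c_jp^{i_j}$, which may be odd and hence not give classical level-one Eisenstein series, is a minor nuisance dispatched by the grouping or $p$-adic Eisenstein device indicated above.
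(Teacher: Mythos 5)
Your derivation of the second consequence is essentially the paper's: the factorisation $V(E_k)/E_k=(V(E_{p-1})/E_{p-1})^{n}\cdot V(e_n)/e_n$ with $n=k/(p-1)$, the Coleman--Wan theorem, the fact that $V$ carries $M_0(O,r^p)$ into $M_0(O,r)$, and the $1$-unit property of $e_n$ coming from $e_n\equiv 1\pmod{p^2}$ together do give the claim (this is exactly how the paper argues, via Corollary \ref{cor:en_esn_1-units} and the proof of Theorem \ref{thm:V(E)/E_oc}). For the third consequence your ``companion Eisenstein series'' device is not workable as stated -- a product of Eisenstein series is not an Eisenstein series, and there is no nebentypus to trivialise here since $\Es_{(s,0)}$ already has trivial tame character in the relevant sense. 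The paper instead exploits the digit room $(p-1)-\delta_p(s)\ge 1$ by taking the weights $k_m=s+(p-1-\delta_p(s))p^{m+t}$, which satisfy $\delta_p(k_m)=p-1$, tend to $s$ $p$-adically and to $\infty$ archimedeanly, and then passes to the limit $V(E_{k_m})/E_{k_m}\to V(\Es_{(s,0)})/\Es_{(s,0)}$ using the closedness of $M_0(\Z_p,\ge\rho)$ under $q$-expansion limits (Proposition \ref{prop:conv_in_basis_closedness}(ii)). You have the right numerical observation but not the right mechanism.

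The real problem is the first assertion, which carries all the content, and there your proposal has a genuine gap that you yourself flag. Your plan -- reduce to $p\nmid n$, induct on admissible $n$, compare $E_{n(p-1)}$ with a product of digit-block Eisenstein series, and estimate the error -- leaves the decisive step (``showing that the difference \dots overconverges to the sharp rate $\ge\frac{p}{p+1}$'') entirely open, and there is no reason to expect Kummer/von Staudt--Clausen congruences on Bernoulli numbers to control the Katz expansion of a \emph{cusp form} error term to that precision; indeed the whole point of the paper is that the unconditional bound loses a factor $\frac1p$ which cannot be recovered by $q$-expansion congruences alone. (Your preliminary reduction ``$e_{pn}$ is sharp iff $e_n$ is'' is also unjustified: $V(E_{n(p-1)})\ne E_{pn(p-1)}$, and the congruence $E_{pk}\equiv E_k$ holds modulo powers of $p$ too small to preserve the rate $\frac{p}{p+1}$ in high Katz degrees.) The paper's actual route is quite different: it transfers the problem to the twisted Hecke values $\T_{p,n}(1)$ via Proposition \ref{comparing_Tp_en}, writes $p\,\T_{p,n}(1)=\sum_{j=0}^{p}g_j^{\,n}$ as a power sum of the $p+1$ functions $g_j=f_j/E_{p-1}$, and uses Newton's identities to obtain a linear recurrence for these power sums whose coefficients, suitably normalised, reduce mod $p$ to the two-term recurrence $s_n=\overline{t_p}\,s_{n-p}+\overline{t_{p+1}}\,s_{n-p-1}$; the ``deep recurrence'' trick $P(D)^{p^t}(s)=0$ in characteristic $p$ then combines with the combinatorics of $\delta_p(n(p-1))=p-1$ (Proposition \ref{rec_mod_p}) to force $\upsilon_n\in pR$, with Condition \ref{condition:e_n_for_small_n} supplying exactly the base cases $1\le n\le p$. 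Without some substitute for this recurrence argument your induction has no engine, so the first assertion remains unproved.
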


It would be tempting to conjecture that Condition \ref{condition:e_n_for_small_n} holds for all primes $p\ge 5$, but we do not have any real reason for doing so beyond the computational work in section \ref{sec:comp}.

Finally, in section \ref{sec:literature} we summarize some previous results for the primes $2$ and $3$ and describe some applications of our results, specifically of the statement about $V(\Es_{(1,0)})/\Es_{(1,0)}$ of Theorem \ref{thm:e_n_oc_conditional} for the primes $p=5,7$.

\section{Preliminaries on overconvergent modular functions}\label{sec:topology_overconv_mod_fcts} Let us first state formal definitions of the $O$-modules (or, $K$-vector spaces) $M_k(\cdot,r)$ and $M_k(\cdot,\ge \rho)$, where $\cdot = O$ or $K$, that we will be working with. We shall confine ourselves to weights $k$ that are divisible by $p-1$, and in fact later on will assume $k=0$, but of course the basic facts are analogous for other weights. Although all of our results are concerned with forms of (tame) level $1$, for the proofs we will occasionally have to connect with the modular definition of overconvergent modular forms (modular forms with growth conditions in the language of \cite{katz_padic}), and will be referring to Katz' original paper \cite{katz_padic} as well as Gouv\^{e}a's book \cite{gouvea_p-adic_modular_forms}. For this reason we need to introduce these concepts in a more general setting.

Let $N\in\N$ be a natural number with $p\nmid N$. Let $k$ be a non-negative integer with $k\equiv 0 \pmod{p-1}$. We will denote by $M_k(N,\cdot)$ where $\cdot = O$ or $K$ the $O$-module or $K$-vector space consisting of classical modular forms of weight $k$, level $\Gamma(N)$, and coefficients in $O$ resp.\ $K$. If $N=1$ we will suppress $N$ from the notation and just write $M_k(\cdot)$. A few times we will have to consider modular forms on other congruence subgroups $\Gamma$ and will then write $M_k(\Gamma,O)$ for modular forms of weight $k$ on $\Gamma$ and coefficients in $O$.

Suppose now that $N\ge 3$ and that $i > 0$ is an integer. Then we have a (non-canonical) splitting
$$
M_{i(p-1)}(N,\Z_p) = E_{p-1} \cdot M_{(i-1)(p-1)}(N,\Z_p) \oplus B^{(0)}_i(N,\Z_p)
$$
with a free $\Z_p$-module $B^{(0)}_i(N,\Z_p)$. This corresponds to the geometric statement of \cite[Lemma 2.6.1]{katz_padic}. We make a fixed choice of these $B^{(0)}_i(N,\Z_p)$ once and for all and also define $B_0^{(0)}(N,\Z_p) := \Z_p$. By tensoring with $O$ or $K$ we obtain $B^{(0)}_i(N,O)$ and $B^{(0)}_i(N,K)$ and keep the above splitting, now with coefficients in $O$ and $K$, respectively.

If $k$ is now a non-negative integer $\equiv 0\pmod{p-1}$, we define $B^{(k)}_0(N,O) := M_k(N,O)$ and $B^{(k)}_i(N,O) := B^{(0)}_{i+k/(p-1)}(N,O)$ for $i>0$, and the same for coefficients in $K$. We then have
$$
B^{(k+t(p-1))}_{i-t}(N,\cdot) = B^{(k)}_i(N,\cdot)
$$
for $t<i$, as well as the splitting
$$
M_{k+i(p-1)}(N,\cdot) = E_{p-1} \cdot M_{k+(i-1)(p-1)}(N,\cdot) \oplus B^{(k)}_i(N,\cdot)
$$
for $i>0$ ($\cdot = O$ or $K$.)

If now $r\in O$, the content of \cite[Proposition 2.6.2]{katz_padic} is that elements $f$ of the $O$-module $M_k(N,O,r)$ of $r$-overconvergent modular forms of weight $k$, level structure $\Gamma(N)$, and coefficients in $O$ can be identified with sums of the form
$$
f = \sum_{i=0}^{\infty} \frac{b_i}{E_{p-1}^i} \leqno{(\ast)}
$$
where $b_i \in B^{(k)}_i(N,O)$ for all $i$ and satisfy
$$
v_p(b_i) \ge i v_p(r) \quad \mbox{for all $i$}
$$
as well as
$$
v_p(b_i) - i v_p(r) \rightarrow \infty \quad \mbox{for $i\rightarrow \infty$} .
$$

(The $b_i$ are classical modular forms. The condition $v_p(b_i) \ge \rho$ means that all coefficients of the $q$-expansions of $b_i$ have valuation $\ge \rho$.)

Tensoring with $K$ gives us the space $M_k(N,K,r)$ that can be identified with sums $(\ast)$ where again $b_i \in B^{(k)}_i(N,O)$ for all $i$, but are only required to satisfy $v_p(b_i) - i v_p(r) \rightarrow \infty$ for $i\rightarrow \infty$.

Keeping all of the above notation, but choosing specifically $N=3$, the group $\GL_2(\Z/N\Z) \cong \SL_2(\Z)/\Gamma(N)$ has order prime to $p$ (as $p\ge 5$.) This group acts on the splitting of $M_{k+i(p-1)}(N,\cdot)$ and we have a projector onto invariants. Defining
$$
B^{(k)}_i(\cdot) := B^{(k)}_i(3,\cdot)^{\GL_2(\Z/3\Z)}
$$
with $\cdot = O$ or $K$, the content of \cite[Proposition 2.8.1]{katz_padic} is that the $O$-module $M_k(O,r)$ of $r$-overconvergent modular forms $f$ of weight $k$, level $1$, and coefficients in $O$ can be identified with sums as in $(\ast)$ but where now $b_i \in B^{(k)}_i(O)$ for all $i$ and satisfy $v_p(b_i) \ge i v_p(r)$ for all $i$ as well as $v_p(b_i) - i v_p(r) \rightarrow \infty$ for $i\rightarrow \infty$. We get the vector space $M_k(K,r)$ by tensoring with $K$.

As we have fixed our choices of the $B^{(k)}_i$, for an overconvergent form $f$ we shall refer to the expression $(\ast)$ as ``the'' Katz expansion of $f$. It is uniquely determined by $f$ by virtue of Propositions 2.6.2 and 2.8.1 of \cite{katz_padic}.

As the weight $k$ in the above will be $k=0$ almost all of the time in what follows, we will suppress the reference to the weight when it is $0$ and write
$$
B_i(\cdot) := B^{(0)}_i(\cdot) .
$$

Suppose that $k\in\N$ is divisible by $p-1$, say $k=n(p-1)$, and that $f\in M_k(O)$. From the above we see that
$$
f = \sum_{i=0}^{n} E_{p-1}^{n-i} \cdot b_i
$$
with $b_i\in B_i(O)$ for $i=0,\ldots,n$. The modular function $f/E_{p-1}^n$ is $r$-overconvergent for some $r$, certainly for $r=1$. We see what the shape of its Katz expansion is:

\begin{prop}\label{prop:katz_exp_classical_form} Suppose that $k\in\N$ is divisible by $p-1$, let $n:=k/(p-1)$, and let $f\in M_k(O)$. Then the Katz expansion of the modular function $f/E_{p-1}^n$ has form
$$
\frac{f}{E_{p-1}^n} = \sum_{i=0}^{n} \frac{b_i}{E_{p-1}^i}
$$
with $b_i\in B_i(O)$ for $i=0,\ldots,n$.
\end{prop}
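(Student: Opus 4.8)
The plan is to reduce the statement entirely to the iterated splitting recorded immediately before the proposition, together with the uniqueness of Katz expansions. First I would make the decomposition of $f$ explicit: writing $k=n(p-1)$ and applying the splitting
$$
M_{j(p-1)}(O) = E_{p-1}\cdot M_{(j-1)(p-1)}(O) \oplus B_j(O)
$$
successively for $j=n,n-1,\ldots,1$, one peels off at each stage a component $b_j\in B_j(O)$ and is left with a classical form in $M_{(j-1)(p-1)}(O)$; at the bottom the remaining form lies in $M_0(O)=B_0(O)$ and is taken as $b_0$. This produces $b_i\in B_i(O)$ with $f=\sum_{i=0}^{n}E_{p-1}^{n-i}b_i$, which is exactly the display preceding the statement. (I would remark in passing that the level-$1$ splitting used here is legitimate because $E_{p-1}$ has level $1$, so multiplication by it is $\GL_2(\Z/3\Z)$-equivariant and the level-$3$ splitting passes to $\GL_2(\Z/3\Z)$-invariants.)

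Next I would divide this identity by $E_{p-1}^{n}$ to get
$$
\frac{f}{E_{p-1}^{n}} = \sum_{i=0}^{n}\frac{b_i}{E_{p-1}^{i}},
$$
a finite sum. Setting $b_i:=0\in B_i(O)$ for $i>n$, the right-hand side is an expression of the form $(\ast)$ with $b_i\in B_i(O)$ for all $i\ge 0$; since only finitely many $b_i$ are nonzero, both the integrality requirement $v_p(b_i)\ge i\cdot v_p(1)=0$ and the convergence requirement $v_p(b_i)-i\cdot v_p(1)\to\infty$ hold trivially. Hence this finite sum is a genuine Katz expansion, exhibiting $f/E_{p-1}^{n}\in M_0(O,1)$ (recovering the $1$-overconvergence already observed). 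By the uniqueness of the Katz expansion --- Propositions 2.6.2 and 2.8.1 of \cite{katz_padic} --- this must be ``the'' Katz expansion of $f/E_{p-1}^{n}$, which is precisely the assertion.

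There is essentially no serious obstacle here: the substantive work has already been done in the discussion preceding the statement. The only two points requiring a word of care are the passage from the level-$3$ splitting to the level-$1$ splitting invoked in the first step, and the appeal to uniqueness of Katz expansions that lets us conclude that the ad hoc finite expansion we wrote down is literally the canonical one; everything else is formal.
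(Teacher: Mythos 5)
Your proposal is correct and follows exactly the route the paper intends: the paper states the proposition without a formal proof, treating it as immediate from the iterated splitting $M_{i(p-1)} = E_{p-1}\cdot M_{(i-1)(p-1)} \oplus B_i$ displayed just before, and your write-up simply makes that peeling-off argument and the appeal to uniqueness of Katz expansions explicit. No gaps.
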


\begin{dfn}\label{dfn:overconv_greater_than_or_equal_rho} Let $k\in\N$ be divisible by $p-1$. For $\rho \in \Q \cap [0,1]$ define the $O$-module $M_k(O,\ge \rho)$ as the $O$-module of forms $f$ such that $f\in M_k(O,r)$ for some $r$, and such that for the Katz expansion $f = \sum_{i=0}^{\infty} \frac{b_i}{E_{p-1}^i}$ with $b_i \in B^{(k)}_i(O)$ we have
$$
v_p(b_i) \ge \rho i
$$
for all $i$.
\end{dfn}

The following proposition gives a simple, alternative definition of $M_k(O,\ge \rho)$.

\begin{prop}\label{prop:overconv_greater_than_or_equal_rho} Let $k\in\N$ be divisible by $p-1$. Suppose that $\rho \in \Q \cap [0,1]$ and that $f\in M_k(O,r)$ for some $r$. Then the following are equivalent.
\begin{enumerate}[(i)]
\item $f\in M_k(O,\ge \rho)$.
\item Whenever $K'/K$ is a finite extension with ring of integers $O'$ and $r'\in O'$ is such that $0\le v_p(r') < \rho$ then $f\in M_k(O',r')$.
\item There is a sequence of finite extensions $K'/K$ with rings of integers $O'$ as well as elements $r'\in O'$ such that $v_p(r')$ converges from below towards $\rho$ and such that $f\in M_k(O',r')$ for each of these $O',r'$.
\end{enumerate}
\end{prop}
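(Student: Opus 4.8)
The plan is to prove the cycle $(i)\Rightarrow(ii)\Rightarrow(iii)\Rightarrow(i)$. The one structural fact I will lean on throughout is the base-change invariance of the Katz expansion: for a finite extension $K'/K$ with ring of integers $O'$ one has $B^{(k)}_i(O') = B^{(k)}_i(O)\otimes_O O'$ (the relevant invariants under $\GL_2(\Z/3\Z)$ are cut out by an idempotent with $\Z_p$-coefficients since $p\ge 5$), so the coefficients $b_i\in B^{(k)}_i(O)$ of the Katz expansion of $f$ lie a fortiori in $B^{(k)}_i(O')$, and by the uniqueness part of \cite[Propositions 2.6.2 and 2.8.1]{katz_padic} the expression $f = \sum_i b_i/E_{p-1}^i$ is still ``the'' Katz expansion of $f$ regarded as an element of $M_k(O',r')$ for any $r'$ for which $f$ happens to be $r'$-overconvergent. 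Hence membership $f\in M_k(O',r')$ is equivalent, by that same criterion, to the two conditions $v_p(b_i)\ge i\,v_p(r')$ for all $i$ together with $v_p(b_i)-i\,v_p(r')\to\infty$; and I note at the outset that, since $f\in M_k(O,r)$ for some $r\in O$ (so $v_p(r)\ge 0$), the growth condition already forces $v_p(b_i)\to\infty$ unconditionally.

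For $(i)\Rightarrow(ii)$: assume $v_p(b_i)\ge\rho i$ for all $i$, and let $K'/K$ be finite with $r'\in O'$ satisfying $0\le v_p(r')<\rho$. Then $v_p(b_i)\ge\rho i\ge i\,v_p(r')$, while $v_p(b_i)-i\,v_p(r')\ge(\rho-v_p(r'))\,i\to\infty$ because $\rho-v_p(r')>0$; hence $f\in M_k(O',r')$ by the criterion above. The implication $(ii)\Rightarrow(iii)$ is immediate once one observes that every rational $q$ with $0\le q<\rho$ is realized as $v_p(r')$ for a suitable element $r'$ of a suitable finite (totally ramified) extension of $K$: choose rationals $q_n\nearrow\rho$ and apply (ii) to the corresponding pairs $(O'_n,r'_n)$. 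Finally, for $(iii)\Rightarrow(i)$: from $f\in M_k(O'_n,r'_n)$ the criterion gives $v_p(b_i)\ge i\,v_p(r'_n)$ for every $i$ and every $n$; letting $n\to\infty$ with $v_p(r'_n)\nearrow\rho$ yields $v_p(b_i)\ge\rho i$, which, together with the standing hypothesis that $f\in M_k(O,r)$ for some $r$, is precisely the assertion $f\in M_k(O,\ge\rho)$.

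There is no deep obstacle; the proof is essentially bookkeeping. The one point that needs genuine care is the base-change invariance of the Katz expansion — this is what lets a single sequence of coefficients $(b_i)$ govern the overconvergence of $f$ over all finite extensions simultaneously, and it rests on the fact that for level $1$ (and, underneath, level $3$) the modules $B^{(k)}_i$ arise by extension of scalars from fixed $\Z_p$-modules, together with uniqueness of the expansion. The other thing to watch is the interplay between the inequality $v_p(b_i)\ge i\,v_p(r')$ and the growth condition $v_p(b_i)-i\,v_p(r')\to\infty$: it is precisely the strict inequality $v_p(r')<\rho$ in (ii) and (iii) (as opposed to $\le$) that renders the growth condition automatic, which is also why the three conditions approach $\rho$ strictly from below rather than attaining it.
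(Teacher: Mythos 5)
Your proof is correct and takes essentially the same route as the paper's: the cycle (i)$\Rightarrow$(ii)$\Rightarrow$(iii)$\Rightarrow$(i), with the identical estimates $v_p(b_i)\ge\rho i\ge i\,v_p(r')$ and $v_p(b_i)-i\,v_p(r')\ge i(\rho-v_p(r'))\to\infty$ on the Katz coefficients. The only difference is that you spell out the base-change invariance of the Katz expansion (that the same $b_i$ govern overconvergence over every finite extension $O'$), a point the paper uses implicitly.
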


\begin{proof} Consider the Katz expansion
$$
f = \sum_{i=0}^{\infty} \frac{b_i}{E_{p-1}^i}
$$
with $b_i\in B^{(k)}_i(O)$ for all $i$.

Suppose that $f\in M_k(O,\ge \rho)$, let $K'/K$ be a finite extension with ring of integers $O'$, and let $r'\in O'$ be such that $0\le v_p(r') < \rho$. We then have $v_p(b_i) \ge \rho i > i v_p(r')$ for all $i$ and furthermore,
$$
v_p(b_i) - i v_p(r') = v_p(b_i) - i\rho + i (\rho - v_p(r')) \ge i (\rho - v_p(r')) \rightarrow \infty
$$
for $i\rightarrow \infty$ as $v_p(r') < \rho$. Thus, $f\in M_k(O',r')$.

On the other hand, given $\rho$ we can find a sequence of finite extensions $K'/K$ with rings of integers $O'$ as well as elements $r'\in O'$ such that $v_p(r')$ converges from below towards $\rho$. If we then have $f\in M_k(O',r')$ for these $r'$, we find that
$$
v_p(b_i) \ge i v_p(r')
$$
for all $i$ and each $r'$, and we deduce $v_p(b_i) \ge \rho i$ for all $i$ whence $f\in M_k(O,\ge \rho)$.
\end{proof}

\begin{remark}\label{rem:higher_level_=>rho} Of course we could have made Definition \ref{dfn:overconv_greater_than_or_equal_rho} more generally at a level $N$ prime to $p$. If one does that, one has the analogue of Proposition \ref{prop:overconv_greater_than_or_equal_rho}, proved by the same proof.
\end{remark}

In what follows below, we need the following observation. It follows instantly from the work of Katz \cite{katz_padic}, but we shall give the argument for the convenience of the reader.

\begin{prop}\label{prop:congr_in_q-exp_implies_congr_for Katz_exp} Suppose that $f\in M_0(O,\ge \rho)$ with Katz expansion $\sum_{i\ge 0} \frac{b_i}{E_{p-1}^i}$. Let $m\in\N$.

If $f\equiv 0 \pmod{p^m}$ in the $q$-expansion then $v_p(b_i) \ge m$ for all $i$.
\end{prop}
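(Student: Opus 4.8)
The plan is to reduce the claim to showing that $c:=\min_i v_p(b_i)$ satisfies $c\ge m$, and to extract this by examining the $q$-expansion of $f$ modulo $p^{c+1}$. First I would note that, since $f\in M_0(O,\ge\rho)$, by definition $f\in M_0(O,r)$ for some $r\in O$, so by Katz's description of $M_0(O,r)$ recalled above the Katz-expansion coefficients satisfy $v_p(b_i)\ge i\,v_p(r)\ge 0$ and $v_p(b_i)-i\,v_p(r)\to\infty$; hence $v_p(b_i)\to\infty$, and (assuming $f\neq 0$, the case $f=0$ being trivial) $c$ is finite and attained on a nonempty finite set $S=\{i: v_p(b_i)=c\}$. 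It therefore suffices to derive a contradiction from the assumption $c<m$.

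Assume $c<m$, so that $c+1\le m$. The computation rests on the classical fact $E_{p-1}\equiv 1\pmod p$, which makes $E_{p-1}$ a $1$-unit in $\Z_p[[q]]$ with $E_{p-1}^{-i}\equiv 1\pmod p$ for every $i$. Splitting
$$
f \;=\; \sum_{i\in S}\frac{b_i}{E_{p-1}^i} \;+\; \sum_{i\notin S}\frac{b_i}{E_{p-1}^i},
$$
the second sum lies in $p^{c+1}O[[q]]$ (each term does, since $v_p(b_i)\ge c+1$ and $E_{p-1}^{-i}\in\Z_p[[q]]$, and it converges coefficientwise because $v_p(b_i)\to\infty$), while for $i\in S$ one has $b_i/E_{p-1}^i\equiv b_i\pmod{p^{c+1}}$ since $v_p(b_i)=c$ and $E_{p-1}^{-i}-1\in p\Z_p[[q]]$. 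Thus $f\equiv\sum_{i\in S}b_i\pmod{p^{c+1}}$ in $q$-expansion. As $f\equiv 0\pmod{p^m}$ and $m\ge c+1$, this gives $\sum_{i\in S}b_i\equiv 0\pmod{p^{c+1}}$, and dividing by $p^c$ and reducing modulo the maximal ideal of $O$ yields a relation $\sum_{i\in S}\overline{b_i/p^c}=0$ in the power-series ring over the residue field, where each $\overline{b_i/p^c}$ is a \emph{nonzero} element of the reduction of $B_i$.

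The main obstacle — the one step that is not a routine $p$-adic estimate — is to rule this out, i.e.\ to know that the $q$-expansion map $\bigoplus_{i\ge 0}B_i\to\F_p[[q]]$ remains injective modulo $p$ (and hence after any field extension of $\F_p$). This is exactly the mod-$p$ shadow of the uniqueness of Katz expansions used above, and I would deduce it from three ingredients: (a) the direct-sum decompositions $M_{i(p-1)}(\Z_p)=E_{p-1}M_{(i-1)(p-1)}(\Z_p)\oplus B_i(\Z_p)$ with $B_i$ free $\Z_p$-modules, which therefore stay direct sums mod $p$; (b) $\overline{E_{p-1}}=1$; and (c) the injectivity of $M_k(\F_p)\hookrightarrow\F_p[[q]]$ at level $1$ for $p\ge 5$ (obtained by taking $\GL_2(\Z/3\Z)$-invariants from the level-$3$ statement, cf.\ \cite{katz_padic}). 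Given these, an induction on the largest index $N$ appearing in a relation $\sum_i c_i=0$ (with $c_i$ in the reduction of $B_i$) finishes it: multiplying the terms of index $<N$ by suitable powers of $E_{p-1}\equiv 1$ exhibits $c_N$ as the $q$-expansion of an element of $E_{p-1}M_{(N-1)(p-1)}(\F_p)$, so by (c) $c_N$ itself lies in that submodule, while $c_N\in B_N(\F_p)$; by (a) these meet only in $0$, so $c_N=0$ and one continues downward. This contradicts the nonvanishing of the $\overline{b_i/p^c}$, so $c\ge m$, completing the proof.
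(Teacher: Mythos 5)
Your argument is correct in substance, but it takes a genuinely different and considerably longer route than the paper. The paper's proof is a direct two-step appeal to Katz: the $q$-expansion principle \cite[Prop.\ 2.8.3]{katz_padic} upgrades $f\equiv 0\pmod{p^m}$ to $f=p^m g$ with $g\in M_0(O,1)$, and the isomorphism $B^{\rm rigid}(O,1,0)\cong M_0(O,1)$ of \cite[Prop.\ 2.8.1]{katz_padic} (i.e.\ uniqueness of Katz expansions) then forces $b_i=p^mc_i$. You instead run a minimal-valuation argument and reduce modulo the maximal ideal, which obliges you to establish the mod-$p$ linear independence of the $q$-expansions of the $\overline{B_i}$; your inductive derivation of that independence from the splittings, from $\overline{E_{p-1}}=1$, and from the classical mod-$p$ $q$-expansion principle at level $1$ is sound, and is essentially a hands-on reproof of the residual content of Katz's isomorphism. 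What your approach buys is self-containedness at the mod-$p$ level; what it costs is that you reprove machinery the paper simply cites, and your argument delivers the conclusion only one valuation step at a time (one would iterate to reach $v_p(b_i)\ge m$), whereas the paper gets it in one stroke.

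Two small repairs are needed, neither fatal. First, when $O$ is ramified the minimum $c=\min_i v_p(b_i)$ need not be an integer, so you should divide by an element of $O$ of valuation $c$ (a power of a uniformizer) rather than by $p^c$. Second, for $i\notin S$ you only know $v_p(b_i)>c$, not $v_p(b_i)\ge c+1$; this still suffices, since after dividing by an element of valuation $c$ those terms have strictly positive valuation and die upon reduction modulo the maximal ideal, but the claim ``the second sum lies in $p^{c+1}O[[q]]$'' as written is false for ramified $O$. Finally, note that your appeal to ``each $\overline{b_i/p^c}$ is a nonzero element of the reduction of $B_i$'' implicitly uses that $B_i(O)$ is an $O$-module direct summand of $M_{i(p-1)}(O)$, so that $b_i\in B_i(O)\cap \pi^{ec}M_{i(p-1)}(O)=\pi^{ec}B_i(O)$; it is worth making that explicit.
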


\begin{proof} Let us consider $f$ as an element of $M_0(O,1)$. The hypothesis that $f\equiv 0 \pmod{p^m}$ in $q$-expansion now implies that $f\in p^m M_0(O,1)$ by the $q$-expansion principle \cite[Proposition 2.8.3]{katz_padic} (recall that we have $p\ge 5$.) Write $f=p^m g$ where $g\in M_0(O,1)$ has Katz expansion $\sum_i \frac{c_i}{E_{p-1}^i}$.

By \cite[Proposition 2.8.1]{katz_padic} there is an isomorphism:
$$
B^{{\rm rigid}}(O,1,0) \cong M_0(O,1)
$$
where $B^{{\rm rigid}}(O,1,0)$ is the $O$-module consisting of all formal sums $\sum_i a_i$ with $a_i \in B_i(O)$ such that $a_i \rightarrow 0$ for $i\rightarrow \infty$, given by
$$
\sum_i a_i \mapsto \sum_i \frac{a_i}{E_{p-1}^i} .
$$

We see that the formal sums $\sum_i b_i$ and $\sum_i p^m c_i$ have the same image under the isomorphism and hence must be equal. Consequently, $b_i = p^m c_i$ for all $i$, and the claim follows.
\end{proof}

Of course, in the proof we only use the injectivity part of the above isomorphism statement. One notices that in \cite[Proposition 2.8.1]{katz_padic} the only restriction is $p\ge 5$, but this is due to the level being $1$. For a more general situation one would have to refer to \cite[Proposition 2.6.2]{katz_padic} that has the more involved hypotheses of \cite[Theorem 2.5.1]{katz_padic}.

\begin{prop}\label{prop:conv_in_basis_closedness} Let $(f_n)_{n \in \Z_{\geq 0}}$ be a sequence in $M_0(O, 1)$ that converges in the $q$-expansions. Let $f\in M_0(O,1)$ denote the limit, and write
$$
f_n = \sum_{i\geq0} \frac{b_i^{(n)}}{E_{p-1}^i} ,\quad f = \sum_{i \geq 0} \frac{b_i}{E_{p-1}^i}
$$
for the Katz expansions of $f_n$ and $f$, respectively. Then we have the following.
\begin{enumerate}[(i)]
\item For each $i$ we have $\lim_{n \rightarrow \infty} b_i^{(n)} = b_i$ in $B_i(O)$.
\item If $f_n\in M_0(O, \geq \rho)$ for each $n$ then in fact $f\in M_0(O, \geq \rho)$. That is, the set $M_0(O,\geq \rho)$ is closed in $M_0(O, 1)$.
\end{enumerate}
\end{prop}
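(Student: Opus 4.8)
The plan is to reduce everything to the isomorphism of \cite[Proposition 2.8.1]{katz_padic} that was already invoked in the proof of Proposition \ref{prop:congr_in_q-exp_implies_congr_for Katz_exp}, namely
$$
B^{{\rm rigid}}(O,1,0) \xrightarrow{\ \sim\ } M_0(O,1) , \qquad \sum_i a_i \mapsto \sum_i \frac{a_i}{E_{p-1}^i} ,
$$
where $B^{{\rm rigid}}(O,1,0)$ consists of the formal sums $\sum_i a_i$ with $a_i \in B_i(O)$ and $a_i \to 0$. The first thing I would do is equip $B^{{\rm rigid}}(O,1,0)$ with its natural sup-norm topology (the topology of a $p$-adic Banach $O$-module with orthonormal-type basis given by the $B_i(O)$), under which $M_0(O,1)$ becomes a $p$-adic Banach module and the displayed map is an isometric isomorphism; this is exactly the setup of Katz's section 2.8, so I would just cite it. Under this identification the $q$-expansion map $M_0(O,1) \to O[[q]]$ is continuous (indeed norm-nonincreasing, since $E_{p-1}$ has constant term $1$ and the $b_i$ are classical forms with integral coefficients). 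Crucially, by the $q$-expansion principle \cite[Proposition 2.8.3]{katz_padic} it is also \emph{injective}, and a continuous injective map of $p$-adic Banach modules over a complete DVR which is known to be an isometry onto its image — which here it is, again by Katz's norm computation — has closed image; hence $M_0(O,1)$ is complete and the subspace topology from $O[[q]]$ agrees with its Banach topology.

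For part (i): the hypothesis is that $f_n \to f$ in the $q$-expansion topology. By the previous paragraph this convergence takes place in the Banach topology of $M_0(O,1)$, i.e. $\|f_n - f\| \to 0$, which via the isometry means $\sup_i v_p(b_i^{(n)} - b_i) \to \infty$. In particular, for each fixed $i$ we get $v_p(b_i^{(n)} - b_i) \to \infty$, i.e. $b_i^{(n)} \to b_i$ in $B_i(O)$, which is the claim. (If one prefers to avoid asserting that the $q$-expansion topology and the Banach topology literally coincide, it suffices to argue directly: since each $f_n$ and $f$ lie in $M_0(O,1)$ and the Katz coefficients of a form are recovered from the $q$-expansion by the explicit finite linear-algebra procedure of \cite[Proposition 2.6.2, 2.8.1]{katz_padic} applied degree by degree — first solve for $b_0 \bmod p$, then $b_1$, etc. — continuity in the $q$-expansion forces termwise convergence of the $b_i^{(n)}$. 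I would phrase it via the isometry since that is cleanest.)

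For part (ii): suppose $f_n \in M_0(O,\ge\rho)$ for every $n$, so by Definition \ref{dfn:overconv_greater_than_or_equal_rho} we have $v_p(b_i^{(n)}) \ge \rho i$ for all $i$ and all $n$. Fix $i$. By part (i), $b_i^{(n)} \to b_i$ in $B_i(O)$, and since $v_p$ is continuous (indeed $\{x : v_p(x) \ge \rho i\}$ is a closed $O$-submodule of $B_i(O)$), the limit satisfies $v_p(b_i) \ge \rho i$. As this holds for every $i$, and $f$ lies in $M_0(O,1) \subseteq M_0(O,r)$ for $r=1$, Definition \ref{dfn:overconv_greater_than_or_equal_rho} gives $f \in M_0(O,\ge\rho)$. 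The final sentence, that $M_0(O,\ge\rho)$ is closed in $M_0(O,1)$, is then just the contrapositive/topological restatement of what was proved, using once more that $q$-expansion convergence is convergence in $M_0(O,1)$.

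The only real subtlety — and the step I would be most careful about — is the claim that $q$-expansion convergence of a sequence in $M_0(O,1)$ is the same as convergence in the Banach norm coming from the Katz basis; equivalently, that $M_0(O,1)$ is \emph{closed} inside $O[[q]]$ for the $q$-expansion topology, so that the limit $f$ is a priori in $M_0(O,1)$ (this is part of the hypothesis as stated, so strictly I may assume it) and so that its Katz coefficients depend continuously on the $q$-expansion. This comes down to Katz's explicit description of the isomorphism $B^{{\rm rigid}}(O,1,0) \cong M_0(O,1)$ being norm-preserving, which is implicit in \cite[\S2.6, \S2.8]{katz_padic}; I would state it as a consequence of those results rather than reprove it. Everything else is soft: termwise passage to the limit against the closed conditions $v_p(\,\cdot\,) \ge \rho i$.
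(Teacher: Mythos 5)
Your proof is correct and follows essentially the same route as the paper: the paper extracts termwise convergence of the Katz coefficients from $q$-expansion convergence by applying Proposition \ref{prop:congr_in_q-exp_implies_congr_for Katz_exp} to $f - f_n \in p^m M_0(O,1)$ (which is precisely the ``isometry'' you invoke), and part (ii) is then the same passage to the limit against the closed conditions $v_p(\cdot)\ge \rho i$. The one step you leave to what is ``implicit'' in Katz --- that the Katz-expansion isomorphism is an isometry for the $q$-expansion sup-norm --- is exactly the content of Proposition \ref{prop:congr_in_q-exp_implies_congr_for Katz_exp} in one direction and the integrality of $E_{p-1}(q)^{-i}$ in the other, so you could simply cite that proposition rather than re-deriving it.
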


\begin{proof} (i) Let $m \in \N$. Then for all sufficient large $n$ we have
$$
f - f_n \in p^m M_0(O, 1).
$$

It then follows from Proposition \ref{prop:congr_in_q-exp_implies_congr_for Katz_exp} that we have
$$
b_i - b_i^{(n)} \in p^m B_i(O)
$$
for all $i$.

We see that
$$
b_i = \lim_{n \rightarrow \infty} b_i^{(n)}.
$$

\noindent (ii) By (i) we have $b_i = \lim_{n \rightarrow \infty} b_i^{(n)}$ in $B_i(O)$ for each $i \in \Z_{\geq 0}$. As now $f_n\in M_0(O, \geq \rho)$ for each $n$, by Definition \ref{dfn:overconv_greater_than_or_equal_rho} we have
$$
v_p(b_i^{(n)}) \ge \rho i
$$
for every $i, n \in \Z_{\geq 0}$. It follows that
$$
v_p(b_i) \ge \rho i
$$
for every $i \in \Z_{\geq 0}$. Thus $f \in M_0(O, \geq \rho)$, again by Definition \ref{dfn:overconv_greater_than_or_equal_rho}.
\end{proof}

\begin{cor}\label{cor:conv_in_qexp} Let $(f_n)_{n\in\Z_{\geq 0}}$ be a sequence in $M_0(O, \geq \rho)$ that converges in $q$-expansions. Then $(f_n)_{n\in\Z_{\geq 0}}$ also converges in $M_0(O, \geq \rho)$.
\end{cor}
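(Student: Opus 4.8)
The plan is to read Corollary \ref{cor:conv_in_qexp} as the assertion that the closed submodule $M_0(O,\geq\rho)$ of the ($p$-adically) complete module $M_0(O,1)$ is itself complete, so that a Cauchy sequence necessarily converges inside it. First I would unwind the hypothesis: saying that $(f_n)$ converges in $q$-expansions means exactly that $(f_n)$ is a Cauchy sequence for the $p$-adic topology on $M_0(O,1)$, since by the $q$-expansion principle \cite[Proposition 2.8.3]{katz_padic} (valid here as $p\ge 5$) a congruence $f_n-f_{n'}\equiv 0 \pmod{p^m}$ in $q$-expansions forces $f_n - f_{n'} \in p^m M_0(O,1)$.

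The one substantive point is then to show that such a Cauchy sequence has a limit inside $M_0(O,1)$, i.e.\ that $M_0(O,1)$ is $p$-adically complete. For this I would use the identification of $M_0(O,1)$ with $B^{\mathrm{rigid}}(O,1,0)$ from \cite[Proposition 2.8.1]{katz_padic}, the $O$-module of formal sums $\sum_i a_i$ with $a_i\in B_i(O)$ and $a_i\to 0$. Given a $p$-adically Cauchy sequence $\sum_i a_i^{(n)}$ of such sums, the Cauchy condition unwinds to: for each $m$, eventually $a_i^{(n)} - a_i^{(n')} \in p^m B_i(O)$ for all $i$ simultaneously. Hence for each fixed $i$ the sequence $(a_i^{(n)})_n$ converges in the $p$-adically complete finite free $O$-module $B_i(O)$ to some $a_i$; and a one-line argument with the ultrametric inequality — using that the rate of convergence is uniform in $i$ and that each $a_i^{(n)}\to 0$ — shows the limiting coefficients still satisfy $a_i\to 0$. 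This produces the required limit $f = \sum_i \frac{a_i}{E_{p-1}^i} \in M_0(O,1)$ of $(f_n)$.

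Finally, with $f\in M_0(O,1)$ now identified as the limit of $(f_n)$ and each $f_n\in M_0(O,\geq\rho)$, Proposition \ref{prop:conv_in_basis_closedness}(ii) gives $f\in M_0(O,\geq\rho)$, while part (i) of the same proposition additionally records that the Katz coefficients of $f_n$ converge to those of $f$. Thus $(f_n)$ converges, within $M_0(O,\geq\rho)$, to $f$, which is the claim. I expect the only step requiring any care is the completeness assertion for $M_0(O,1)$, that a $q$-adically Cauchy sequence of overconvergent forms of a fixed radius of overconvergence again has an overconvergent limit of that radius; everything else is the formal principle that a closed subset of a complete space is complete, together with the results already proved in this section.
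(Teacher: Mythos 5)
Your proof is correct and follows essentially the same route as the paper, whose entire argument is to embed the $f_n$ into $M_0(O,1)$ and invoke Proposition \ref{prop:conv_in_basis_closedness}. The only difference is that you explicitly justify the $p$-adic completeness of $M_0(O,1)$ via the identification with $B^{\mathrm{rigid}}(O,1,0)$, a point the paper leaves implicit in the phrase ``Let $f\in M_0(O,1)$ denote the limit'' in the statement of that proposition; spelling it out is a reasonable (and correct) addition, not a change of method.
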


\begin{proof} Immediate from Proposition \ref{prop:conv_in_basis_closedness} by embedding the $f_n$ into $M_0(O,1)$.
\end{proof}

\section{Classical Eisenstein series and overconvergence}\label{sec:classical_eisenstein}

\subsection{Twisted Hecke operators}\label{subsec:twisted_hecke_operators} Acting on Serre $p$-adic modular forms of weight $k$ we have the Atkin $U$ operator as well as the Hecke operator $T_{\ell}$ for any prime $\ell\neq p$:
$$
U, T_\ell: M_k(K, 1) \rightarrow M_k(K, 1)
$$
(cf.\ \cite[\S 2]{serre_zeta}.)

We will define and study twisted versions of these operators in weight $0$.

\begin{dfn}\label{def:twisted_operators} For $n \in \N$ and primes $\ell \neq p$, define
$$
\U_n(f) := \frac{U(fE_{p-1}^n)}{E_{p-1}^n}, \quad \T_{\ell, n}(f) := \frac{T_\ell(fE_{p-1}^n)}{E_{p-1}^n}
$$
for Serre $p$-adic modular functions $f\in M_0(K,1)$.
\end{dfn}

Notice that as the operators $U$ and $T_{\ell}$ commute we see that the operators $\U_n$ and $\T_{\ell,n}$ commute as well.

The reason for introducing these twisted operators is the fact that the modular function $\es_n$ (defined in Definition \ref{def:en_esn}) is an eigenfunction for all of them:

\begin{prop}\label{prop:es_n_eigenfct} Let $n \in \N$. We have $\T_{\ell,n} \es_n = (1+\ell^{n(p-1)-1}) \cdot \es_n$ and $\U_n \es_n = \es_n$.
\end{prop}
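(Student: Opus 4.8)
The plan is to reduce the statement about the twisted operators $\U_n$ and $\T_{\ell,n}$ acting on the weight-$0$ function $\es_n = \Es_{n(p-1)}/E_{p-1}^n$ to the classical, and well-known, eigenform properties of the Eisenstein series $\Es_{n(p-1)}$ in weight $k=n(p-1)$ on $\Gamma_0(p)$. The key point is that the twisting by $E_{p-1}^n$ is, up to the issue of what weight one works in, a formal device: by Definition \ref{def:twisted_operators} we have, for any $f\in M_0(K,1)$,
$$
E_{p-1}^n\cdot\T_{\ell,n}(f) = T_\ell(f E_{p-1}^n), \qquad E_{p-1}^n\cdot\U_n(f) = U(f E_{p-1}^n).
$$
Applying this with $f=\es_n$ gives $f E_{p-1}^n = \Es_{n(p-1)}$, so everything comes down to computing $T_\ell \Es_{n(p-1)}$ and $U\Es_{n(p-1)}$ as classical modular forms (or rather, as Serre $p$-adic forms of weight $n(p-1)$), dividing back by $E_{p-1}^n$, and reading off the eigenvalues.

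First I would recall the $q$-expansion of $\Es_k$ for $k=n(p-1)$. From $\Es_k = \Es_{(k,0)}$ (valid since $k\equiv 0\pmod{p-1}$) and the formula in the introduction, the $n$-th coefficient involves $\sum_{d\mid n,\,p\nmid d} d^{k-1}\tau(d)^{-k}$; since $k$ is divisible by $p-1$ we have $\tau(d)^{-k}=1$ for $p\nmid d$, so this is just the "$p$-deprived" divisor sum $\sigma_{k-1}^{(p)}(n):=\sum_{d\mid n,\ p\nmid d}d^{k-1}$. Thus $\Es_k$ is, on $q$-expansions, the standard $p$-stabilized Eisenstein series: an eigenform for all $T_\ell$ with $\ell\neq p$ having eigenvalue $1+\ell^{k-1}$, and for $U$ with eigenvalue $1$ (the unit root of $X^2 - (1+\ell^{k-1})X + \ell^{k-1}$ degenerating, for the prime $p$, to the choice that kills the $p^{k-1}V(E_k)$ contribution). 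Concretely one checks directly on $q$-expansions: $U\Es_k$ has $n$-th coefficient equal to the $pn$-th coefficient of $\Es_k$, and since $\sigma_{k-1}^{(p)}(pn)=\sigma_{k-1}^{(p)}(n)$ one gets $U\Es_k=\Es_k$; and the Hecke recursion for $T_\ell$ on the coefficients $\sigma_{k-1}^{(p)}$ together with the constant term bookkeeping gives $T_\ell\Es_k=(1+\ell^{k-1})\Es_k$, where $k-1=n(p-1)-1$. These are routine multiplicative-function manipulations and I would not write them out in full.

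Having established $T_\ell \Es_{n(p-1)} = (1+\ell^{n(p-1)-1})\Es_{n(p-1)}$ and $U\Es_{n(p-1)} = \Es_{n(p-1)}$ in $M_{n(p-1)}(K,1)$, I would then divide by $E_{p-1}^n$. The only subtlety to address is that the operators $U$ and $T_\ell$ in Definition \ref{def:twisted_operators} act on weight-$0$ Serre forms, while the displayed eigenform identities live in weight $n(p-1)$; but the operators $U$ and $T_\ell$ of \cite[\S 2]{serre_zeta} are defined on Serre $p$-adic forms in every weight, and the twisting formula above is an identity of $q$-expansions, so dividing the weight-$(n(p-1))$ identities through by the $q$-expansion of $E_{p-1}^n$ gives exactly $\T_{\ell,n}\es_n=(1+\ell^{n(p-1)-1})\es_n$ and $\U_n\es_n=\es_n$ as identities in $M_0(K,1)$. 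The main (and really the only) obstacle is making sure the bookkeeping of constant terms is correct in the $U$- and $T_\ell$-computations — in particular that the normalization $\Es_k = (E_k - p^{k-1}V(E_k))/(1-p^{k-1})$ genuinely produces the $p$-deprived divisor sums with the right leading coefficient — but this is a direct check on the explicit $q$-expansions and presents no conceptual difficulty.
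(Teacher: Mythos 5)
Your proposal is correct and follows essentially the same route as the paper: both reduce the claim, via the defining identity $E_{p-1}^n\cdot\T_{\ell,n}(\es_n)=T_\ell(\Es_{n(p-1)})$ (and likewise for $\U_n$), to the standard fact that the $p$-stabilized Eisenstein series $\Es_{n(p-1)}$ is an eigenform for $T_\ell$ and $U$ with eigenvalues $1+\ell^{n(p-1)-1}$ and $1$. The paper simply cites this eigenform property as known, whereas you additionally sketch its verification on $q$-expansions; this is fine but not needed.
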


\begin{proof} This follows instantly from the definition of the twisted operators and the fact that $E_{n(p-1)}^{\ast}$ is an eigenform for the corresponding untwisted operators with the same eigenvalues. Thus,
$$
\mathcal{T}_{\ell,n} (e_n^{\ast}) = \frac{T_{\ell}(e_n^{\ast} E_{p-1}^n)}{E_{p-1}^n} = \frac{T_{\ell}(E_{n(p-1)}^{\ast})}{E_{p-1}^n} = c \cdot \frac{E_{n(p-1)}^{\ast}}{E_{p-1}^n} = c \cdot e_n^{\ast} ,
$$
with $c = (1+\ell^{n(p-1)-1})$, and we have a similar computation of $\U_n \es_n$ (with $c=1$.)
\end{proof}

It is now necessary for us to study the integrality properties of these operators when restricting to overconvergent modular functions. Of course, such questions for the ``untwisted operators'', i.e., the case $n=0$, were first studied in Katz' foundational paper \cite{katz_padic}, cf.\ \cite[Integrality Lemma 3.11.4]{katz_padic}.

For parts of the proofs of the following lemmas we will need to work with the original definition of overconvergent forms as functions on ``test objects'', and we will now first briefly recall the relevant definitions.

For $O$ as above, consider a $p$-adically complete and separated $O$-algebra $R$. Given $N\in\N$ prime to $p$ and $r\in O$, by an $r$-test object we mean a quadruple $(\sh{E}_{/R},\omega,\alpha_N,Y)$ consisting of an elliptic curve $\sh{E}$ over $R$, a nonvanishing differential $\omega$ on $\sh{E}$, a $\Gamma(N)$-level structure $\alpha_N$, and an element $Y\in R$ satisfying
$$
Y \cdot E_{p-1}(\sh{E},\omega) = r
$$
with $E_{p-1}$ the classical Eisenstein series of weight $p-1$, a lift of the Hasse invariant. If $N=1$ as will be the case in most of what follows, any level $N$ structure is trivial, and we will drop reference to it in our notation. Also, there is no reference to the level $N$ in ``$r$-test object'' as it will always be clear from the context what $N$ is.

An $r$-overconvergent modular form $f$ of weight $k$ is then a function of such quadruples (or, triples), assigning a value $f(\sh{E}_{/R},\omega,\alpha_N,Y) \in R$ to such a test object such that the value only depends on the isomorphism class of $(\sh{E}_{/R},\omega,\alpha_N,Y)$, the formation of $f(\sh{E}_{/R},\omega,\alpha_N,Y)$ commutes with base change, and we have
$$
f(\sh{E}_{/R},\lambda \omega,\alpha_N,\lambda^{1-p} Y)) = \lambda^{-k} f(\sh{E}_{/R},\omega,\alpha_N,Y)
$$
for $\lambda \in R^{\times}$.

\begin{lem}[Integrality Lemma]\label{integrality_lemma}  The operators $\U_n$ and $\T_{\ell, n}$ for $\ell \not = p$ restrict to operators
$$
\U_n: M_0\left(O, \geq \frac{1}{p+1} \right) \rightarrow \frac{1}{p} M_0\left(O, \geq \frac{p}{p+1}\right),\leqno{(i)}
$$
$$
\T_{\ell,n}: M_0\left(O, \geq \frac{p}{p+1}\right) \rightarrow M_0\left(O, \geq \frac{p}{p+1}\right).\leqno{(ii)}
$$
\end{lem}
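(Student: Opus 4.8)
The plan is to reduce both statements to the classical ("untwisted") Integrality Lemma of Katz \cite[3.11.4]{katz_padic} together with the known overconvergence properties of $E_{p-1}$ and $V(E_{p-1})$. First I would recall that $U$ and $T_\ell$ act integrally on $r$-overconvergent forms of weight $k=0$ with a controlled loss: Katz shows that $U$ maps $r$-overconvergent forms of growth $p^{p/(p+1)}$-ish (i.e.\ forms overconvergent at rate $\tfrac{1}{p+1}$) into $\tfrac1p$ times forms overconvergent at rate $\tfrac{p}{p+1}$, and that $T_\ell$ preserves overconvergence. The only genuinely new ingredient is that twisting by $E_{p-1}^n$ and untwisting by $E_{p-1}^n$ must be shown to interact correctly with the $M_0(O,\ge\rho)$ filtration. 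The key fact here is that $E_{p-1}$ itself lies in $M_{p-1}(O,\ge 1)$ (it is the reduction of the Hasse invariant, so $E_{p-1}\equiv 1$ in a strong sense), equivalently $V(E_{p-1})/E_{p-1}$ is a $1$-unit with the Coleman--Wan rate $\tfrac{1}{p+1}$, and that multiplication by $E_{p-1}$ induces isomorphisms $M_k(O,\ge\rho)\xrightarrow{\sim}E_{p-1}\cdot M_k(O,\ge\rho)\subseteq M_{k+(p-1)}(O,\ge\rho)$ compatibly with Katz expansions, by the very shape of the splitting recalled in section \ref{sec:topology_overconv_mod_fcts}.

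For part (ii), the argument is cleanest: given $f\in M_0(O,\ge\tfrac{p}{p+1})$, the form $fE_{p-1}^n$ lies in $M_{n(p-1)}(O,\ge\tfrac{p}{p+1})$ since multiplication by $E_{p-1}^n$ shifts the Katz expansion by $n$ and, because $E_{p-1}$ is a $1$-unit of slope meeting the bound, preserves the valuation estimates $v_p(b_i)\ge\rho i$; then $T_\ell$ preserves $r$-overconvergence at every rate and acts integrally, so $T_\ell(fE_{p-1}^n)\in M_{n(p-1)}(O,\ge\tfrac{p}{p+1})$; finally dividing by $E_{p-1}^n$ shifts the Katz expansion back down, landing in $M_0(O,\ge\tfrac{p}{p+1})$. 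The commutation of these operations with base change to $O'$ and the characterization in Proposition \ref{prop:overconv_greater_than_or_equal_rho}(ii) make the "$\ge\rho$" bookkeeping automatic: one just checks $f\in M_0(O',r')$ for every $r'$ with $v_p(r')<\tfrac{p}{p+1}$, using that $T_\ell$ and multiplication by $E_{p-1}^{\pm n}$ are defined over $O'$ and respect $r'$-overconvergence.

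For part (i), the same untwisting trick applies but now the classical $U$ operator genuinely loses a factor $\tfrac1p$ and shifts the rate from $\tfrac{1}{p+1}$ up to $\tfrac{p}{p+1}$: starting from $f\in M_0(O,\ge\tfrac{1}{p+1})$, form $fE_{p-1}^n\in M_{n(p-1)}(O,\ge\tfrac{1}{p+1})$, apply Katz's Integrality Lemma to get $U(fE_{p-1}^n)\in\tfrac1p M_{n(p-1)}(O,\ge\tfrac{p}{p+1})$, then divide by $E_{p-1}^n$. The main subtlety — and the step I expect to be the real obstacle — is verifying precisely that Katz's $U$-operator estimate, which in \cite{katz_padic} is phrased in terms of test objects and a specific growth parameter, translates into exactly the rates $\tfrac{1}{p+1}\mapsto\tfrac{p}{p+1}$ with the factor $\tfrac1p$ in the $M_k(O,\ge\rho)$ language used here. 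Concretely one must trace through the definition of $U$ on test objects $(\sh E_{/R},\omega,Y)$ — the sum over the $p$ subgroups of order $p$, with the attendant $\tfrac1p$ normalization for weight $0$ and the way the parameter $Y$ (hence the radius of overconvergence) transforms under isogeny — to confirm the quantitative bound, rather than just the qualitative "$U$ decreases overconvergence." Once that identification is in hand, the passage between weight $n(p-1)$ at level $1$ and weight $0$ via $E_{p-1}^n$ is routine, and so is the descent through finite extensions $O'$ via Proposition \ref{prop:overconv_greater_than_or_equal_rho}.
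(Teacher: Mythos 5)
There is a genuine gap, and it sits in the same place in both parts: the final ``untwisting'' step, where you divide by $E_{p-1}^n$ and assert that this ``shifts the Katz expansion back down'' into $M_0(O,\ge\rho)$. Division by $E_{p-1}^n$ does \emph{not} preserve the rate of overconvergence. On an $r$-test object one has $Y\cdot E_{p-1}(\sh{E},\omega)=r$, so $1/E_{p-1}^n = Y^n/r^n$, and multiplying an element of $M_{n(p-1)}(O',r')$ by this costs a factor $r'^{-n}$; equivalently, in Katz-expansion terms, if $g=\sum_i b'_i/E_{p-1}^i\in M_{n(p-1)}(O',\ge\rho)$ then the coefficient $b'_i\in B_{i+n}(O')$ appears in $g/E_{p-1}^n$ attached to $1/E_{p-1}^{\,i+n}$, where the required bound is $v_p(b'_i)\ge\rho(i+n)$ while you only know $v_p(b'_i)\ge\rho i$ (and the weight-$n(p-1)$ classical piece $b'_0$ contributes terms $c_j/E_{p-1}^j$, $j\le n$, with no valuation gain at all). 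In part (i) the loss is even worse, since after applying $U$ the relevant radius is $r^p$ and the division costs $r^{pn}$. So the reduction to the untwisted operators, as you set it up, does not close.

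The paper avoids the division entirely, by two different mechanisms. For (i) it uses Coleman's trick $\frac{1}{G}U(FG)=U\bigl(F\cdot\frac{G}{V(G)}\bigr)$ with $G=E_{p-1}^n$, so that $\U_n(f)=U\bigl(f\cdot(E_{p-1}/V(E_{p-1}))^n\bigr)$; the Coleman--Wan theorem makes $E_{p-1}/V(E_{p-1})$ a $1$-unit in $M_0(O',r)$ for $v_p(r)<\frac{1}{p+1}$, so the argument stays in weight $0$ and Katz's Integrality Lemma for $U$ applies directly, giving $p\,\U_n(f)\in M_0(O',r^p)$. (You cite Coleman--Wan, but only to justify the harmless multiplication step, not in this essential role.) For (ii) the paper computes $\T_{\ell,n}(f)$ directly on an $r$-test object: since $\ell\neq p$ the quotient maps $\pi_i:\sh{E}\to\sh{E}/C_i$ are \'etale, the Hasse invariants of source and target agree, hence $E_{p-1}(\sh{E}/C_i,\breve{\pi}_i^\ast\omega)/E_{p-1}(\sh{E},\omega)$ is a \emph{unit} of $R$, and the factor $\bigl(E_{p-1}(\sh{E}/C_i)/E_{p-1}(\sh{E})\bigr)^n$ absorbs the twist with no loss. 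That unit statement is exactly the input your plan is missing; without it (or Coleman's trick) the twisted operators cannot be controlled by the untwisted ones at the stated rates.
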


\begin{proof}
\noindent (i) Referring back to Proposition \ref{prop:overconv_greater_than_or_equal_rho} we see that it suffices to show the following. Consider a finite extension $K'/K$ with ring of integers $O'$ and $r\in O'$ with $v_p(r) < \frac{1}{p+1}$. If then $f\in M_0(O',r)$ we have $p\U_n(f) \in M_0(O',r^p)$.

So, let now $O'$ and $r\in O'$ be as above, and let $f\in M_0(O',r)$. We now utilize ``Coleman's trick'', \cite[(3.3)]{coleman_classical}, which consists of the identity $\frac{1}{G}U(FG) = U(F\cdot \frac{G}{V(G)})$ that we will use in the special case $F=f$, $G=E_{p-1}^n$, giving
$$
\U_n(f) = \frac{1}{E_{p-1}^n} U(fE_{p-1}^n) = U \left( f\cdot \frac{E_{p-1}^n}{V(E_{p-1}^n)} \right) = U\left( f\cdot \left( \frac{E_{p-1}}{V(E_{p-1})} \right)^n \right)
$$
where we have $f\cdot (E_{p-1}/V(E_{p-1}))^n \in M_0(O',r)$ since $E_{p-1}/V(E_{p-1})$ is a $1$-unit in $M_0(O',r)$ by the Coleman--Wan theorem. The claim now follows by referring back to the ``Integrality Lemma'' \cite[3.11.4]{katz_padic} which will now show that $p\U_n(f) \in M_0(O',r^p)$. Technically, in order to refer to \cite[3.11.4]{katz_padic} we have to embed our situation into an auxiliary higher level $N$ prime to $p$, say $N=3$. However, this is unproblematic for the overall argument, cf.\ Remark \ref{rem:higher_level_=>rho}.
\smallskip

\noindent (ii) Consider a finite extension $K'/K$ with ring of integers $O'$ and $r\in O'$ with $v_p(r) < \frac{p}{p+1}$. Let $f\in M_0(O',r)$. We must show that $\T_{\ell,n}(f) \in M_0(O',r)$.

Let $(\sh{E}_{/R}, \omega, Y)$ be an $r$-test object with $R$ a $p$-adically complete and separated $O'$-algebra. Let $C_1, \ldots, C_{\ell+1}$ be the subgroups of $\sh{E}$ of order $\ell$, and
$$
\pi_i: \sh{E} \rightarrow \sh{E}/C_i
$$
be the quotient maps. Since $\ell \not = p$, the maps $\pi_i$ are \'etale and therefore $\sh{E}$ and $\sh{E}/C_i$ have the same Hasse invariant. Thus $E_{p-1}(\sh{E}, \omega)$ is a unit times $E_{p-1}(\sh{E}/C_i, \breve{\pi}_i^\ast\omega)$ and we find that
$$
``\frac{T_\ell (fE_{p-1}^n)}{E_{p-1}^n}"(\sh{E}, \omega, Y) = \ell^{n(p-1)-1}\sum_{i=1}^{\ell+1} \left( \frac{E_{p-1}(\sh{E}/C_i, \breve{\pi}_i^\ast \omega)}{E_{p-1}(\sh{E}, \omega)} \right)^n f(\sh{E}/C_i, \breve{\pi}_i^\ast \omega, \breve{\pi}_i^\ast Y)
$$
is an element of $R$, and we conclude $\T_{\ell,n}(f) \in M_0(O',r)$.
\end{proof}

If $N\in\N$ is prime to $p$ and $f\in M_k(\Gamma(N) \cap \Gamma_0(p),O)$ is a classical modular form, then under certain numerical restrictions on $N,p$ the theory of the canonical subgroup provides us with a natural embedding of $f$ into $M_k(N,O,r)$ when $v_p(r)<p/(p+1)$: the image $\tilde{f}$ of $f$ in $M_k(N,O,r)$ is given explicitly on an $r$-test object $(\sh{E}_{/R},\omega,\alpha_N,Y)$ by
$$
\tilde{f}(\sh{E}_{/R},\omega,\alpha_N,Y) = f(\sh{E}_{/R},\omega,\alpha_N,C)
$$
where $C$ is the canonical subgroup. As to the numerical restrictions, \cite[Theorem 3.2]{katz_padic} says that $N\ge 3$ is sufficient, but looking at the statements of \cite[Theorem 3.1]{katz_padic} one sees that this works in our cases for any $N\in\N$ as we always have $p\ge 5$. We shall henceforth simply write $f$ for the image of $f$ in $M_k(N,O,r)$ (for $v_p(r)<p/(p+1)$.)

If we have $N\ge 3$ prime to $p$ and either $k\ge 2$ or $k=0$, \cite[Corollary 2.6.3]{katz_padic} says that we have an injection $M_k(N,O,r) \hookrightarrow M_k(N,O,1)$ given by composition with the transformation of functors given by $(\sh{E}_{/R},\omega,\alpha_N,Y) \mapsto (\sh{E}_{/R},\omega,\alpha_N,rY)$.

\begin{lem}\label{lem:r^kV(f)} Let $f\in M_k(O)$ be a classical form of level $1$ and with $k\ge 2$. If $N\ge 3$ is prime to $p$, and if $r\in O$ with $v_p(r) < p/(p+1)$, we have
$$
r^k V(f) \in M_k(N,O,r) .
$$
\end{lem}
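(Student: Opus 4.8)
The plan is to pass through the modular (test-object) interpretation of overconvergent forms recalled just before the statement, exploiting the action of the Frobenius operator $V$ via the canonical subgroup. First I would recall the description of $V$ on a $p$-adically convergent test object: if $(\sh{E}_{/R},\omega,\alpha_N,Y)$ is an $r$-test object with $v_p(r)<p/(p+1)$, then $\sh{E}$ has a canonical subgroup $C$ of order $p$, and one sets
$$
(V f)(\sh{E}_{/R},\omega,\alpha_N,Y) = f\bigl(\sh{E}/C,\,\breve{\pi}^\ast\omega,\,\pi(\alpha_N)\bigr),
$$
where $\pi\colon\sh{E}\to\sh{E}/C$ is the quotient isogeny and $\breve{\pi}$ its dual; this is the standard formula (cf.\ \cite[\S 3]{katz_padic}), and one must check it lands in $R$ once suitably scaled. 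The point is that, while $f$ is only a classical level-$1$ form of weight $k$ with values in $O$, plugging in $\sh{E}/C$ requires knowing how $E_{p-1}(\sh{E}/C,\breve{\pi}^\ast\omega)$ compares with $E_{p-1}(\sh{E},\omega)=r/Y$.

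The key step is the ramification estimate for the canonical subgroup: for $v_p(r)<p/(p+1)$ one has the congruence, essentially Lubin's / Katz's computation, that
$$
E_{p-1}\bigl(\sh{E}/C,\breve{\pi}^\ast\omega\bigr) \equiv E_{p-1}(\sh{E},\omega)^{p}\pmod{p\cdot R},
$$
or more precisely that $V(E_{p-1})/E_{p-1}^{p}$ is a unit on such test objects (this is exactly the kind of statement underlying the Coleman--Wan theorem, where $V(E_{p-1})/E_{p-1}$ is shown to be a $1$-unit in $M_0(O,r)$ for $v_p(r)<1/(p+1)$; the relation $V(E_{p-1})/E_{p-1} = (V(E_{p-1})/E_{p-1}^{p})\cdot E_{p-1}^{p-1}$ converts between the two normalizations). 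Granting this, $E_{p-1}(\sh{E}/C,\breve{\pi}^\ast\omega)$ is a unit in $R$ whenever $E_{p-1}(\sh{E},\omega)$ is, so $Y':=Y^{p}\cdot\bigl(E_{p-1}(\sh{E},\omega)^{p}/E_{p-1}(\sh{E}/C,\breve{\pi}^\ast\omega)\bigr)$ satisfies $Y'\cdot E_{p-1}(\sh{E}/C,\breve{\pi}^\ast\omega)=r^{p}/\!\;$(something)$\;$; the bookkeeping should be arranged so that $r^{k}V(f)$, evaluated on the $r$-test object, equals $r^{k}f(\sh{E}/C,\breve{\pi}^\ast\omega)$, and the factor $r^{k}$ is precisely what absorbs the weight-$k$ transformation law under rescaling $\breve{\pi}^\ast\omega$ to a differential for which $E_{p-1}$ takes the value $r$ (rather than a unit multiple). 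Concretely: write $\breve{\pi}^\ast\omega=\mu\cdot\omega'$ with $\omega'$ chosen so that $(\sh{E}/C,\omega',\pi(\alpha_N),rY)$ is again an $r$-test object; then $v_p(\mu)\ge -1$ forces the correction $\mu^{k}$ to be killed by $r^{k}$ once $v_p(r)<p/(p+1)$ — and here one uses the sharper bound from the canonical-subgroup theory that in fact $v_p(\mu)\ge 0$ on the nose when $v_p(r)<p/(p+1)$, so that no denominator appears. I would then conclude that $r^{k}V(f)$ defines an $R$-valued function on $r$-test objects, satisfies the weight-$k$ transformation law (a direct check from the formula, using that $V$ commutes with rescaling $\omega$), and commutes with base change, hence $r^{k}V(f)\in M_k(N,O,r)$, having first embedded the level-$1$ form into level $N=3$ as in the paragraph preceding the statement.

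The main obstacle is getting the valuation bookkeeping exactly right: one needs the precise statement of how much the Hasse-invariant lift $E_{p-1}$ changes under passage to the quotient by the canonical subgroup, with the correct comparison of $\breve{\pi}^\ast\omega$ to the ``normalized'' differential, so that the power of $r$ needed to clear denominators is exactly $k$ and not larger. I expect this is where one must be careful to invoke the strong form of the canonical-subgroup estimate valid for $v_p(r)<p/(p+1)$ (as opposed to the weaker range $v_p(r)<1/(p+1)$ governing the Coleman--Wan $1$-unit statement), and to track that the map $(\sh{E},\omega,Y)\mapsto(\sh{E}/C,\breve{\pi}^\ast\omega,\cdot)$ genuinely produces a valid test object after the scaling. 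Everything else — the transformation law, base-change compatibility, and the reduction to level $3$ — is routine and follows the pattern of the proof of the Integrality Lemma above.
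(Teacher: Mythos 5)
Your overall route --- evaluating $V(f)$ on an $r$-test object via the canonical subgroup and comparing $\breve{\pi}^{\ast}\omega$ with a normalized differential $\omega'$ on the quotient --- is the same as the paper's, but the central valuation step is wrong in a way that matters. Writing $\breve{\pi}^{\ast}\omega=\lambda\omega'$, the weight-$k$ transformation law gives $V(f)(\sh{E},\omega,\alpha_N,Y)=\lambda^{-k}f(\sh{E}',\omega',\ldots)$, so the potential denominator is $\lambda^{k}$; your claim that ``$v_p(\mu)\ge 0$ on the nose \dots so that no denominator appears'' is backwards, since $\lambda\in R$ does not make $\lambda^{-k}\in R$, and if no denominator appeared the lemma would hold without the factor $r^{k}$ --- which the Remark immediately following the lemma says is false in general ($V(f)$ need not be defined over $O$). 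The correct mechanism is not a canonical-subgroup ramification estimate at all, but simply that $\lambda$ and $E_{p-1}(\sh{E},\omega)$ are both lifts of the Hasse invariant of $\sh{E}$ and hence differ by a unit of $R$, combined with the defining relation $Y\cdot E_{p-1}(\sh{E},\omega)=r$ of an $r$-test object; then
$$
r^{k}\lambda^{-k}=Y^{k}E_{p-1}(\sh{E},\omega)^{k}\lambda^{-k}=(\text{unit})\cdot Y^{k}\in R .
$$
This is exactly how the paper proceeds: it defines $F(\sh{E},\omega,\alpha_N,Y):=Y^{k}\left(E_{p-1}(\sh{E},\omega)/\lambda(\sh{E},\omega)\right)^{k}f(\sh{E}',\omega',\alpha'_N,C')$, which is manifestly an element of $M_k(N,O,r)$, and then checks that its image under the embedding $M_k(N,O,r)\hookrightarrow M_k(N,O,1)$ (which substitutes $rY$ for $Y$ and thereby produces the factor $r^{k}$ from $Y^{k}$) coincides with $r^{k}V(f)$.

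Two further inaccuracies. First, the congruence $E_{p-1}(\sh{E}',\omega')\equiv E_{p-1}(\sh{E},\omega)^{p}\pmod{r_1R}$ that you single out as ``the key step'' is not needed for this lemma (it is used later, in the proof of Lemma \ref{suppintlemma}(ii)); what is needed here is only the unit comparison above. Second, the quotient datum is naturally an $r^{p}$-test object, not an $r$-test object with parameter $rY$ as you assert, precisely because $E_{p-1}(\sh{E}')$ is comparable to $E_{p-1}(\sh{E})^{p}$ rather than to $E_{p-1}(\sh{E})$. The hypothesis $v_p(r)<p/(p+1)$ enters only through the existence of the canonical subgroup and the resulting embedding of the classical form $f$ into $M_k(N,O,r)$, not through any ``sharper estimate'' on $\lambda$.
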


\begin{proof} Certainly, $V(f)$ is a Serre $p$-adic modular form of weight $k$, i.e., an element of $M_k(O,1) \subseteq M_k(N,O,1)$. The claim is that $r^k V(f)$ coincides with an element of $M_k(N,O,r)$ if this module is seen as a submodule of $M_k(N,O,1)$ via the embedding described before the statement of the lemma.

Recall again that since $v_p(r) < \frac{p}{p+1}$, if $(\sh{E}_{/R},\omega,\alpha_N,Y)$ is an $r$-test object then under the identification of $f$ with its image in $M_k(N,O,r)$ (after first embedding into $M_k(N,O)$) we have
$$
f(\sh{E},\omega,\alpha_N,Y) = f(\sh{E},\omega,\alpha_N,C)
$$
with $C$ the canonical subgroup of $\sh{E}$.

We now refer to (the proof of) \cite[Theorem 3.3]{katz_padic} as well as \cite[pp.\ 112--113]{katz_padic} for various facts regarding the general modular definition of the Frobenius operator $V$: Let $r_1\in O$ with $v_p(r_1)<1/(p+1)$ and let $((\sh{E}_1)_{/R},\omega_1,\alpha_N,Y_1)$ be an $r_1$-test object. Let $\sh{E}'_1 = \sh{E}_1/C$ with $C$ the canonical subgroup and $\pi : \mor{\sh{E}_1}{\sh{E}'_1}$ be the quotient map. Let $\omega'_1$ be a non-vanishing invariant differential on $\sh{E}'_1$. Then $\breve{\pi}^{\ast}(\omega_1) = \lambda \omega'_1$ for some $\lambda = \lambda\left(\sh{E}_1,\omega_1 \right) \in R$. This $\lambda\left(\sh{E}_1,\omega_1 \right)$ and $E_{p-1}\left(\sh{E}_1,\omega_1 \right)$ are both lifts of the Hasse invariant of $(\sh{E}_1,\omega_1)$, and so the latter differs from the first by a unit. For ease of writing, and as in the proof of \cite[Theorem 3.3]{katz_padic}, we denote this unit by
$$
\frac{E_{p-1}\left(\sh{E}_1, \omega_1 \right)}{\lambda\left(\sh{E}_1, \omega_1 \right)} .
$$

With $\alpha'_N = \pi(\alpha_N)$, the $r_1$-test object $((\sh{E}_1)_{/R},\omega_1,\alpha_N,Y_1)$ gives rise to an $r_1^p$-test object $((\sh{E}'_1)_{/R},\omega'_1,\alpha'_N,Y_1')$, and for an element $g\in M_k(N,O,r_1^p)$, the modular definition of $V(g)$ now reads:
$$
E_{p-1}^k \cdot V(g)(\sh{E}_1,\omega_1,\alpha_N,Y_1) = \left(\frac{E_{p-1}(\sh{E}_1, \omega_1)}{\lambda(\sh{E}_1, \omega_1)} \right)^k\cdot g(\sh{E}'_1,\omega'_1,\alpha'_N,Y'_1) .
$$

Using $E_{p-1} \cdot Y_1 = r_1$ and specializing to $g=f\in M_k(N,O,r)$ as well as $r_1 = 1$, we then have:
\begin{eqnarray*}
V(f)(\sh{E}_1,\omega_1,\alpha_N,Y_1) &=& Y_1^k \cdot \left(\frac{E_{p-1}(\sh{E}_1, \omega_1)}{\lambda(\sh{E}_1, \omega_1)} \right)^k\cdot f(\sh{E}'_1,\omega'_1,\alpha'_N,Y'_1) \\
&=& Y_1^k \cdot \left(\frac{E_{p-1}(\sh{E}_1, \omega_1)}{\lambda(\sh{E}_1, \omega_1)} \right)^k\cdot f(\sh{E}'_1,\omega'_1,\alpha'_N,C')
\end{eqnarray*}
($C'$: canonical subgroup of $\sh{E}'$.) We use this formula as inspiration to {\it define} an element $F\in M_k(N,O,r)$ by
$$
F(\sh{E},\omega,\alpha_N,Y) = Y^k \cdot \left(\frac{E_{p-1}(\sh{E}, \omega)}{\lambda(\sh{E}, \omega)} \right)^k\cdot f(\sh{E}',\omega',\alpha'_N,C')
$$
for an $r$-test object $(\sh{E},\omega,\alpha_N,Y)$ (and $\sh{E}'$ etc.\ defined as above.) We now compare $F\in M_k(N,O,r)$ as embedded into $M_k(N,O,1)$ with $V(f)$. Let us denote by $\iota$ the embedding $M_k(N,O,r) \hookrightarrow M_k(N,O,1)$ as described before the statement of the lemma. Then, for a $1$-test object $(\sh{E}_1,\omega_1,\alpha_N,Y_1)$ we find
\begin{eqnarray*}
&&(\iota F) (\sh{E}_1,\omega_1,\alpha_N,Y_1) = F (\sh{E}_1,\omega_1,\alpha_N,rY_1) \\
&& = r^k \cdot Y_1^k \cdot \left(\frac{E_{p-1}(\sh{E}_1, \omega_1)}{\lambda(\sh{E}_1, \omega_1)} \right)^k\cdot f(\sh{E}'_1,\omega'_1,\alpha'_N,C') = r^k V(f) (\sh{E}_1,\omega_1,\alpha_N,Y_1) ,
\end{eqnarray*}
and we are done.
\end{proof}

\begin{remark} In \cite[p.\ 38, Remark 2]{gouvea_p-adic_modular_forms} it is stated that if $f$ is classical of weight $k$ with coefficients in $O$ and level $\Gamma_1(N)$ with $N\ge 3$ prime to $p$, then if $v_p(r)<p/(p+1)$, both $f$ and $V(f)$ embed into $M_k(N,O,r)$, the argument being that both $f$ and $V(f)$ are classical on $\Gamma_1(N)\cap \Gamma_0(p)$. It is true that these forms are classical of course, but the claim is nonetheless false in general, the problem being that $V(f)$ might not be defined over $O$. In down to earth terms this is because the Fourier expansion of $V(f)$ around some cusps might not be integral anymore. However, we will have $p^k V(f) \in M_k(N,O,r)$ as the above lemma shows.
\end{remark}

\begin{lem}\label{suppintlemma} Let $n\in\N$ and let $f \in M_{n(p-1)}(O)$ be a classical modular form of level $1$, weight $n(p-1)$, and coefficients in $O$.
\begin{enumerate}[(i)]
\item
$$
\frac{p^{n(p-1)} V(f)}{E_{p-1}^n} \in M_0\left(O, \geq \frac{p-1}{p} \right).
$$
\item If, furthermore, we have
$$
\frac{f}{E_{p-1}^{n}} \in M_0\left(O, \geq \frac{1}{p+1} \right),
$$
then
$$
\frac{p^{n(p-1)} V(f)}{E_{p-1}^{n}} \in M_0\left(O, \geq \frac{p}{p+1} \right).
$$
\end{enumerate}
\end{lem}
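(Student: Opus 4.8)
The plan is to read both statements off the Katz expansion of $\frac{p^{n(p-1)}V(f)}{E_{p-1}^{n}}$, the main input being the preceding Lemma~\ref{lem:r^kV(f)} (with $k=n(p-1)$) and the trivial observation that the leading Katz coefficient of $p^{n(p-1)}V(f)$ is divisible by $p^{n(p-1)}$; as usual one works at the auxiliary level $N=3$ and descends via Remark~\ref{rem:higher_level_=>rho}. For part (i), Lemma~\ref{lem:r^kV(f)} gives $r^{n(p-1)}V(f)\in M_{n(p-1)}(N,O,r)$ for every $r$ with $v_p(r)<\tfrac{p}{p+1}$; writing the weight-$n(p-1)$ Katz expansion $V(f)=\sum_{i\ge0}c_i/E_{p-1}^{\,i}$ with $c_i\in B_i^{(n(p-1))}(O)$ this reads $v_p(c_i)\ge(i-n(p-1))v_p(r)$, and letting $v_p(r)\to\tfrac{p}{p+1}$ (and using $v_p(c_i)\ge0$ always) yields $v_p(p^{n(p-1)}c_i)\ge\max\{n(p-1),\ \tfrac{p}{p+1}i+\tfrac{n(p-1)}{p+1}\}$. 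Passing to weight $0$: by Proposition~\ref{prop:katz_exp_classical_form} the leading term decomposes as $p^{n(p-1)}c_0=\sum_{j=0}^{n}E_{p-1}^{\,n-j}d_j$ with $d_j\in B_j(O)$ and $v_p(d_j)\ge v_p(p^{n(p-1)}c_0)\ge n(p-1)$, so the weight-$0$ Katz expansion of $\frac{p^{n(p-1)}V(f)}{E_{p-1}^n}$ is $\sum_m b_m/E_{p-1}^{\,m}$ with $b_m=d_m$ for $m\le n$ and $b_m=p^{n(p-1)}c_{m-n}$ for $m>n$. For $m\le n$ one has $v_p(b_m)\ge n(p-1)\ge\tfrac{p-1}{p}m$; for $m>n$ the bound above gives $v_p(b_m)\ge\max\{n(p-1),\ \tfrac{p}{p+1}(m-n)+\tfrac{n(p-1)}{p+1}\}$, and since the two quantities in this maximum coincide at $m=np$ while the second has the larger slope $\tfrac{p}{p+1}>\tfrac{p-1}{p}$, one gets $v_p(b_m)\ge\tfrac{p-1}{p}m$ for all $m$. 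The same estimates give $v_p(b_m)-m\,v_p(r)\to\infty$ for $v_p(r)<\tfrac{p}{p+1}$, so $\frac{p^{n(p-1)}V(f)}{E_{p-1}^n}\in M_0(O,r)$ for some $r$, and Definition~\ref{dfn:overconv_greater_than_or_equal_rho} gives (i).

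For part (ii), the estimates in (i) fall short of the target $\tfrac{p}{p+1}m$ by exactly the constant $\tfrac{n}{p+1}$, and only in the range $m>n$; so the extra hypothesis has to be used to improve, uniformly in $j\ge1$ by this same $\tfrac{n}{p+1}$, the bound on the weight-$n(p-1)$ Katz coefficients $c_j$ of $V(f)$. I would bring the hypothesis in through the factorisation
$$\frac{V(f)}{E_{p-1}^{n}}\;=\;\Big(\frac{V(E_{p-1})}{E_{p-1}}\Big)^{\!n}\,V\!\Big(\frac{f}{E_{p-1}^{n}}\Big),$$
valid because $V$ is a ring homomorphism on $q$-expansions, together with the Coleman--Wan theorem (which says $\tfrac{V(E_{p-1})}{E_{p-1}}$ is a $1$-unit in $M_0(O,r)$ for $v_p(r)<\tfrac1{p+1}$) and the Integrality Lemma~\ref{integrality_lemma}: since $\tfrac{f}{E_{p-1}^{n}}\in M_0(O,\ge\tfrac1{p+1})$ by hypothesis, the operator $\U_n$ is available on it, and the idea is to trade, at the cost of the factor $p^{n(p-1)}$, the bad behaviour of $V$ against the good behaviour of $\U_n$ and the $1$-unit $(V(E_{p-1})/E_{p-1})^{n}$. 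Concretely I expect this to be done as in the proof of the Integrality Lemma itself, by passing to $r$-test objects, using the canonical subgroup to rewrite the value of $\tfrac{p^{n(p-1)}V(f)}{E_{p-1}^n}$ in terms of the value of $\tfrac{f}{E_{p-1}^n}$ on the quotient curve, and observing that the $p^{n(p-1)}$ and the $1$-unit absorb exactly the powers of $E_{p-1}$ that appear; one also uses Proposition~\ref{prop:congr_in_q-exp_implies_congr_for Katz_exp}, which forces $v_p(b_m)\ge n(p-1)$ for all $m$ because $\tfrac{p^{n(p-1)}V(f)}{E_{p-1}^n}\equiv0\pmod{p^{n(p-1)}}$ in the $q$-expansion.

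The main obstacle is precisely this interaction of $V$ with division by $E_{p-1}^{n}$: a priori $V$ degrades overconvergence by roughly a factor $p$, so merely multiplying the overconvergence rates of the two factors in the factorisation above produces a rate far below $\tfrac{p}{p+1}$ (and naively decomposing $f=\sum_{i=0}^{n}E_{p-1}^{\,n-i}a_i$ with $v_p(a_i)\ge\tfrac i{p+1}$ gains nothing at $i=0$, where $a_0$ is merely a constant in $O$). The substance of (ii) is that the powers of $E_{p-1}$ hidden in $V(f)$ and the explicit $E_{p-1}^{n}$ in the denominator conspire, so that the factor $p^{n(p-1)}$ together with the overconvergence of $f/E_{p-1}^{n}$ supplies exactly the missing constant $\tfrac{n}{p+1}$ in the Katz-coefficient estimates; making this precise at the level of test objects and the canonical subgroup is the technical heart of the lemma, while the remaining comparison of valuations of Katz coefficients is bookkeeping of the kind already carried out in (i).
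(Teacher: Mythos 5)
Your part (i) is correct and takes essentially the paper's route: both arguments rest on Lemma \ref{lem:r^kV(f)} together with the relation $E_{p-1}\cdot Y=r$. The only difference is that you convert the membership $r^{np}\,V(f)/E_{p-1}^{n}\in M_0(N,O',r)$ into explicit Katz--coefficient bounds and verify the slope $\tfrac{p-1}{p}$ term by term, whereas the paper simply observes that $v_p(p^{n(p-1)})>v_p(r^{np})=np\,v_p(r)$ once $v_p(r)<\tfrac{p-1}{p}$. Your bookkeeping checks out: the two bounds in your maximum do meet at $m=np$, where the target $\tfrac{p-1}{p}m$ is attained exactly, and your version even yields a marginally sharper piecewise-linear bound.

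Part (ii), however, is a plan rather than a proof, and the step you defer is precisely the content of the lemma. You correctly diagnose that the deficit to be recovered is the constant $\tfrac{n}{p+1}$ and that it must come from a test-object computation with the canonical subgroup; but the factorisation $V(f)/E_{p-1}^{n}=(V(E_{p-1})/E_{p-1})^{n}V(f/E_{p-1}^{n})$ together with Coleman--Wan and Lemma \ref{integrality_lemma} cannot deliver it --- as you note yourself, $V$ divides rates of overconvergence by roughly $p$, and the Integrality Lemma concerns $U$, not $V$; neither ingredient appears in the paper's argument for this part. What actually closes the argument is a pair of estimates absent from your sketch. One writes $f=\sum_{i=0}^{n}E_{p-1}^{n-i}b_i$ with $v_p(b_i)\ge\tfrac{i}{p+1}$, evaluates $p^{n(p-1)}V(f)/E_{p-1}^{n}$ on an $r$-test object via the modular description of $V$ from Lemma \ref{lem:r^kV(f)}, and regroups the $i$-th term as $\bigl(\tfrac{p^{p-1}}{r^{p}}Y^{p}E_{p-1}(\sh{E}',\omega')\bigr)^{n-i}\cdot\bigl(\tfrac{p^{p-1}}{r^{p}}\bigr)^{i}b_i(\sh{E}',\omega')\,Y^{pi}$. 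The second factor is integral because $v_p\bigl((p^{p-1}/r^{p})^{i}\bigr)>-\tfrac{i}{p+1}$ is exactly offset by the hypothesis on $b_i$; the first is integral because the canonical-subgroup congruence $E_{p-1}(\sh{E}',\omega')=E_{p-1}(\sh{E},\omega)^{p}+\tfrac{p}{r}h$ (Katz, equation (3.9.1)) gives $\tfrac{p^{p-1}}{r^{p}}Y^{p}E_{p-1}(\sh{E}',\omega')=p^{p-1}+\tfrac{p^{p}}{r^{p+1}}hY^{p}$, with $v_p(p^{p}/r^{p+1})=p-(p+1)v_p(r)>0$ precisely when $v_p(r)<\tfrac{p}{p+1}$. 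This congruence --- not the $1$-unit property of $V(E_{p-1})/E_{p-1}$ --- is what pushes the radius from $\tfrac{p-1}{p}$ to $\tfrac{p}{p+1}$, and without it your argument for (ii) does not close.
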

\begin{proof} Put $k:=n(p-1)$.
\smallskip

\noindent (i) Again referring back to Proposition \ref{prop:overconv_greater_than_or_equal_rho}, it suffices to consider a (sufficiently large) finite extension $K'/K$ with ring of integers $O'$, an $r\in O'$ with $v_p(r) < \frac{p-1}{p}$, and then show that $p^{n(p-1)} V(f)/E_{p-1}^n\in M_0(O',r)$. Again, choosing an auxiliary level $N\ge 3$ prime to $p$ -- for instance $N=3$ -- and embedding $f$ into $M_k(N,O',r)$, it suffices to show that $p^{n(p-1)} V(f)/E_{p-1}^n\in M_0(N,O',r)$ for such $O',r$ (recall Remark \ref{rem:higher_level_=>rho}.)

Let then $N$, $O'$, $r$ be like that. As $v_p(r) < \frac{p-1}{p} < \frac{p}{p+1}$, Lemma \ref{lem:r^kV(f)} applies and shows that $r^k V(f) \in M_k(N,O',r)$. Because of the relation $E_{p-1} \cdot Y = r$ for $r$-test objects $(\sh{E}_{/R},\omega,\alpha_N,Y)$ we then find that
$$
r^{k+n} \cdot \frac{V(f)}{E_{p-1}^n} \in M_0(N,O',r) .
$$

But we have $v_p(p^k) = k = n(p-1) > v_p(r^{k+n}) = np \cdot v_p(r)$ because $v_p(r) < \frac{p-1}{p}$.
\smallskip

\noindent (ii) We retain the setup and the discussion in part (i) and assume additionally that
$$
\frac{f}{E_{p-1}^{n}} \in M_{0}\left(O, \geq \frac{1}{p+1}\right) .
$$

We then have a Katz expansion
$$
\frac{f}{E_{p-1}^{n}} = \sum_{i = 0}^{n} \frac{b_i}{E_{p-1}^i}
$$
where the $b_i\in B_i(O)$ satisfy $v_p(b_i) \ge \frac{i}{p+1}$, cf.\ Proposition \ref{prop:katz_exp_classical_form}. So,
$$
f = \sum_{i = 0}^{n} E_{p-1}^{n-i} b_i .
$$

As in part (i) we choose an auxiliary $N=3$ as well as a finite extension $K'$ of $K$ with ring of integers $O'$. It suffices to show that $p^k \frac{V(f)}{E_{p-1}^n} \in M_0(N,O',r)$ if $r\in O'$ has $v_p(r)<p/(p+1)$.

Assuming initially $v_p(r)<(p-1)/p$, we have from the discussion in part (i) as well as the proof of Lemma \ref{lem:r^kV(f)} that $r^{k+n} \frac{V(f)}{E_{p-1}^n} = r^{np} \frac{V(f)}{E_{p-1}^n} \in M_0(N,O',r)$ with the following value on an $r$-test object $(\sh{E}_{/R},\omega,\alpha_N,Y)$:
\begin{eqnarray*}
&& r^{np} \frac{V(f)}{E_{p-1}^n} (\sh{E}_{/R},\omega,\alpha_N,Y) = Y^{k+n} \cdot \left(\frac{E_{p-1}(\sh{E}, \omega)}{\lambda(\sh{E}, \omega)} \right)^k\cdot f(\sh{E}',\omega',\alpha'_N,C') \\
&=& \left(\frac{E_{p-1}(\sh{E}, \omega)}{\lambda(\sh{E}, \omega)} \right)^k\cdot Y^{np} \sum_{i = 0}^{n} E_{p-1}^{n-i}(\sh{E}', \omega') b_i(\sh{E}', \omega') .
\end{eqnarray*}
with notation as in the proof of Lemma \ref{lem:r^kV(f)}. As $v_p(r)<(p-1)/p$ the element $\frac{p^{p-1}}{r^p}$ has positive valuation and hence so does $\frac{p^k}{r^{np}} = \left( \frac{p^{p-1}}{r^p} \right)^n$, and we see that
$$
p^k \frac{V(f)}{E_{p-1}^n} = \left( \frac{p^k}{r^{np}}\right) \cdot r^{np} \frac{V(f)}{E_{p-1}^n} \in M_0(N,O',r)
$$
with the value of this function on the $r$-test object $(\sh{E}_{/R},\omega,\alpha_N,Y)$ equalling
\begin{eqnarray*}
&& \left(\frac{E_{p-1}(\sh{E}, \omega)}{\lambda(\sh{E}, \omega)} \right)^k \sum_{i = 0}^{n} \left( \frac{p^{p-1}}{r^p} \right)^n Y^{np} \cdot E_{p-1}^{n-i}(\sh{E}', \omega') b_i(\sh{E}', \omega') \\
&=& \left(\frac{E_{p-1}(\sh{E}, \omega)}{\lambda(\sh{E}, \omega)} \right)^k \sum_{i = 0}^{n} \left( \frac{p^{p-1}}{r^p} Y^p E_{p-1}(\sh{E}', \omega') \right)^{n-i} \cdot \left( \left( \frac{p^{p-1}}{r^p} \right)^i b_i(\sh{E}', \omega') \right) Y^{pi} .
\end{eqnarray*}

If we now relax our assumption on $r$ to just $v_p(r) < p/(p+1)$, it is clear that we have $p^k \frac{V(f)}{E_{p-1}^n} \in M_0(N,K',r) = M_0(N,O',r) \otimes K'$. We show that in fact $p^k \frac{V(f)}{E_{p-1}^n} \in M_0(N,O',r)$ by showing that both of the factors
$$
\frac{p^{p-1}}{r^p} Y^p E_{p-1}(\sh{E}', \omega') ~\mbox{and}~ \left(\frac{p^{p-1}}{r^p} \right)^i b_i(\sh{E}', \omega')
$$
in the sum above are in fact elements of $R$ under our assumptions. For the last of these, this follows readily: we have
$$
v_p\left( \left(\frac{p^{p-1}}{r^p} \right)^i \right) > i\cdot \left((p-1) - \frac{p^2}{p+1} \right) = -\frac{i}{p+1} ,
$$
and the claim follows as we have $v_p(b_i) \ge \frac{i}{p+1}$.

To see the claim for the first factor, put $r_1 := \frac{p}{r}$. Since the canonical subgroup of $\sh{E} \pmod{r_1R}$ becomes the kernel of the Frobenius map $\sh{E}\otimes R/r_1 R \rightarrow \left(\sh{E}\otimes R/r_1R \right)^{(p)}$, we have
$$
E_{p-1}\left(\sh{E}', \omega'\right) = E_{p-1}^p\left(\sh{E}, \omega\right) + r_1 h
$$
for some $h \in R$, where we have chosen and fixed $\omega'$ to be a nowhere vanishing one-form on $\sh{E}'$ that is congruent to $\omega^{(p)}$ on $\sh{E}^{(p)}$ modulo $r_1R$. Confer the discussion pp.\ 118--121 of \cite{katz_padic}, and in particular equation (3.9.1) (notice that we have $v_p(r_1)>1/(p+1)$ so that $t_{\rm can} \in r_1 R$ in the terminology on p.\ 120 of \cite{katz_padic}.)

Using the relation $Y\cdot E_{p-1}(\sh{E}, \omega) = r$ we then find
$$
\frac{p^{p-1}}{r^p} Y^p E_{p-1}(\sh{E}', \omega') = \frac{p^{p-1}}{r^p} (r^p + \frac{p}{r}h Y^p) = p^{p-1} + \frac{p^p}{r^{p+1}}hY^p ,
$$
an element of $R$ as
$$
v_p\left(p^p / r^{p+1}\right) = p - (p+1)v_p(r) > 0 .
$$
\end{proof}

\subsection{Overconvergence of \texorpdfstring{$e_n^{\ast}$}{en*}}\label{subsec:overconvergence_en*} A central element of our study of $\es_n$ is the following statement that is an immediate consequence of a theorem of Serre, cf.\ \cite[Th\'{e}or\`{e}me 8]{serre_zeta}.

\begin{thm}(Serre)\label{projpoly} Let $k\in (p-1)\N$.

There exists a polynomial $H_k \in U\Z_p [U, \{T_\ell \mid \ell \not = p\}]$ such that,
\begin{enumerate}[(i)]
\item $H_k(E_k^\ast) = \Es_k$, and
\item $\lim_{i \rightarrow \infty} H_k^i(f) = 0$ for every cuspidal Serre $p$-adic modular form $f$ of weight $k$.
\end{enumerate}
\end{thm}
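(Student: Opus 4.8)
My plan is to treat Theorem~\ref{projpoly} as essentially a repackaging of Serre's structure theory for $p$-adic modular forms, isolate what I take from \cite[Th\'{e}or\`{e}me 8]{serre_zeta}, and then read off $H_k$. Recall the setting: $M_k(K,1)$ is the $p$-adic Banach space of Serre $p$-adic modular forms of weight $k$, with unit ball $M_k(\Z_p,1)$, on which $U$ and the $T_\ell$ ($\ell\neq p$) act by commuting continuous operators. Since $(p-1)\mid k$ the nebentypus $\tau^{-k}$ is trivial, and $E_k^\ast = E_{(k,0)}^\ast = \Es_k$ is a simultaneous eigenform with $U E_k^\ast = E_k^\ast$ and $T_\ell E_k^\ast = (1+\ell^{k-1}) E_k^\ast$; write $\lambda\colon\Z_p[U,\{T_\ell\}]\to\Z_p$ for the associated character. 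From \cite[Th\'{e}or\`{e}me 8]{serre_zeta}, together with the standard facts on filtrations of mod $p$ modular forms that it rests on, I would extract three facts: (a) the cuspidal $p$-adic modular forms of weight $k$ form a closed, Hecke-stable submodule $S_k\subseteq M_k(K,1)$; (b) on $\overline S_k := (S_k\cap M_k(\Z_p,1))\otimes_{\Z_p}\F_p$ only finitely many systems of Hecke eigenvalues occur, and the Hecke-submodule generated by any single element is finite-dimensional; (c) the reduction $\overline\lambda$ of $\lambda$ is \emph{not} among the eigensystems of $\overline S_k$. The force of (c) is that $(p-1)\mid k$ makes $v_p(B_k)=-1$ by von Staudt--Clausen, hence $E_k\equiv 1\pmod p$, so $\overline\lambda$ is the eigensystem of a constant; a cuspidal mod $p$ form carrying it would be a weight-$k$ Eisenstein congruence, and there is none for such $k$.

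Granting (a)--(c), I would build $H_k$ by interpolation. Let $\mu_1,\dots,\mu_r$ be the finitely many eigensystems occurring in $\overline S_k$. By (c) each $\mu_j\neq\overline\lambda$, so for each $j$ pick $X_j\in\{U\}\cup\{T_\ell\}$ with $\mu_j(X_j)\neq\overline\lambda(X_j)$, and let $\pi_j\in\F_p[Y]$ be the minimal polynomial of $\mu_j(X_j)$ over $\F_p$. Put $\overline Q := \prod_{j=1}^r \pi_j(X_j)\in\F_p[U,\{T_\ell\}]$; then $\overline Q$ annihilates every Hecke eigenvector of $\overline S_k$, while $\overline\lambda(\overline Q)\neq 0$ because $\overline\lambda(X_j)\in\F_p$ is not a conjugate of $\mu_j(X_j)$. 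Rescale $\overline Q$ so that $\overline\lambda(\overline Q)=1$, and set $\overline P := U\cdot\overline Q$: it still annihilates all Hecke eigenvectors of $\overline S_k$, has $\overline\lambda(\overline P)=\overline\lambda(U)\overline\lambda(\overline Q)=1$, and — this is the point of the extra factor $U$ — has no constant term. Lift the coefficients of $\overline P$ arbitrarily to $\Z_p$ to obtain $H\in U\Z_p[U,\{T_\ell\}]$ with $H\equiv\overline P\pmod p$, and finally set $H_k := u^{-1}H$, where $u:=\lambda(H)\in\Z_p$. Since $u\equiv\overline\lambda(\overline P)=1\pmod p$, $u$ is a unit, so $H_k\in U\Z_p[U,\{T_\ell\}]$ and $H_k\equiv\overline P\pmod p$.

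It then remains to verify (i) and (ii). Property (i) is immediate since $E_k^\ast$ is a simultaneous eigenform: $H_k(E_k^\ast) = u^{-1}\lambda(H)\,E_k^\ast = E_k^\ast = \Es_k$. For (ii): the reduction $\overline{H_k}=\overline P$ annihilates every Hecke eigenvector of $\overline S_k$, hence by (b) acts nilpotently on the finite-dimensional Hecke-submodule generated by any element of $\overline S_k$; that is, $\overline{H_k}$ is locally nilpotent on $\overline S_k$. Now let $f$ be a cuspidal $p$-adic modular form of weight $k$; rescaling by a power of $p$ we may assume $f\in S_k\cap M_k(\Z_p,1)$. Local nilpotence yields $\overline{H_k}^{\,i_0}(\overline f)=0$ for some $i_0$, i.e.\ $H_k^{i_0}(f)\in p\,M_k(\Z_p,1)$, and this element still lies in $S_k$; writing $H_k^{i_0}(f)=p f_1$ with $f_1\in S_k\cap M_k(\Z_p,1)$ and repeating, one finds $H_k^{i}(f)\to 0$ in $M_k(\Z_p,1)$. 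The substantive content lies entirely in the input (a)--(c) — above all the absence of Eisenstein congruences in weights $k\equiv 0\pmod{p-1}$, which is exactly what \cite[Th\'{e}or\`{e}me 8]{serre_zeta} supplies; everything after that is the routine interpolation, lifting, and $p$-adic bookkeeping sketched here.
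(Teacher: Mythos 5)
Your route is genuinely different from the paper's. The paper does not reprove anything: it quotes Serre's Th\'eor\`eme 8 as directly supplying an integral polynomial $H$ in $U$ and the $T_\ell$ with $H(E_k^\ast)=c(k)E_k^\ast$, $c(k)\in\Z_p^{\times}$, and $H^i(f)\to 0$ for cuspidal $f$; it then sets $H_k:=c(k)^{-1}H$ and disposes of the membership $H\in U\Z_p[U,\{T_\ell\}]$ by an appeal to inspecting the end of Serre's proof. You instead reconstruct the mechanism behind that theorem from structural facts about mod $p$ forms: finiteness of the cuspidal mod $p$ spectrum, interpolation by minimal polynomials over $\F_p$, an extra factor of $U$ to remove the constant term (a cleaner device than ``inspection'', and legitimate precisely because $\overline{\lambda}(U)=1\neq 0$), lifting, normalization by the $1$-unit $\lambda(H)$, and the local-nilpotence-plus-divide-by-$p$ limit argument for (ii). All of that bookkeeping is correct, including the generalized-eigenspace argument for nilpotence and the summation of exponents needed to get $v_p(H_k^i(f))\to\infty$.

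The weak point is your input (c), which you correctly identify as carrying all the force, but whose justification as given does not work. A cuspidal mod $p$ eigenform with eigensystem $\overline{\lambda}$ need not be congruent to $E_k\equiv 1$: sharing eigenvalues is much weaker than a congruence of $q$-expansions (a normalized such eigenform would have $a_1=1$, while $E_k^\ast$ has $a_1\equiv 0\pmod p$). Indeed, the formal $q$-series $\sum_{n\ge 1}\sigma^\ast_{k-1}(n)q^n=G^\ast_k-a_0(G^\ast_k)$ is a cuspidal eigen-$q$-expansion with exactly the eigensystem $\lambda$; what saves (c) is that it is \emph{not} a $p$-adic modular form of weight $k$ (because the constant $a_0(G^\ast_k)$, which has valuation $-1$ by von Staudt--Clausen, has weight $0\neq k$ in weight space), and showing that its reduction does not nonetheless lie in the reduction of the cuspidal weight-$k$ space is precisely the nontrivial content of Serre's Th\'eor\`eme 8. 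So either you take (c) as a black box from Serre --- in which case your argument is complete but proves from intermediate facts what the paper obtains by citing the final statement, and note that (a)--(c) as formulated are really extracted from the \emph{proof} of Th\'eor\`eme 8 (and Jochnowitz-type finiteness results) rather than from its statement --- or you intend to prove (c), in which case the one-line argument offered is a genuine gap.
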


Let us briefly explain why Theorem \ref{projpoly} follows from Serre's theorem, Th\'{e}or\`{e}me 8 of \cite{serre_zeta}. A consequence of the latter theorem is the existence of a polynomial $H$ in $U$ and the $T_{\ell}$, $\ell\neq p$, with integral coefficients such that for every $k\in\N$ divisible by $p-1$ we have $H(E_k^{\ast}) = c(k) E_k^{\ast}$ with $c(k)\in \Z_p^{\times}$, and such that $\lim_{i \rightarrow \infty} H^i(f) = 0$ for every cuspidal Serre $p$-adic modular form $f$ of weight $k$.

Thus, for a fixed $k\in \N$ with $k\equiv 0 \pmod{p-1}$ we can take the $H_k$ in Theorem \ref{projpoly} to be $H_k := c(k)^{-1} H$ with the $H$ from \cite[Th\'{e}or\`{e}me 8]{serre_zeta}, where the only thing that is not immediately clear is the fact that we can take $H \in U\Z_p [U, \{T_\ell: \ell \not = p\}]$. However, that fact follows by an inspection of the proof, cf.\ the end of the proof of \cite[Th\'{e}or\`{e}me 8]{serre_zeta} on p.\ 219 of \cite{serre_zeta}.

\begin{remark} Inspection of the proof of \cite[Th\'{e}or\`{e}me 8]{serre_zeta} gives an algorithm for determining a polynomial $H_k$ as above. Serre notes, cf.\ loc.\ cit.\ p.\ 219, that we can take $H_k = U$ if $p \in \{2,3,5,7\}$, and $H_k = 11U(U+5)$ for $p=13$.

For $p=17$, Serre suggests $H_k = \frac{1}{2^{k-1} + 6} U(T_2 + 5)$. However, $H_k = 14U(U+10)$ also works.

We have investigated the situation numerically for all $p \leq 500$ and have found that it is possible for those primes to choose $H_k$ to be a polynomial in the $U$ operator alone. Whether this continues to be the case for higher primes, we do not know.
\end{remark}

\begin{lem}\label{lem:limit_H_n(p-1)^i_on_E(p-1)^n} Let $n \in \N$ and $H_{n(p-1)}$ be a polynomial as in Theorem \ref{projpoly}. Then
$$
\lim_{i \rightarrow \infty} H_{n(p-1)}^i(E_{p-1}^n) = \Es_{n(p-1)}
$$
in $q$-expansions.
\end{lem}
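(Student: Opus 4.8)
The plan is to reduce the statement to the two defining properties of $H_{n(p-1)}$ recorded in Theorem \ref{projpoly}, exploiting that $E_{p-1}^n$ and $\Es_{n(p-1)}$ differ by a cusp form. Throughout put $k := n(p-1)$ and abbreviate $H_k := H_{n(p-1)}$.

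First I would observe that $E_{p-1}^n$ and $\Es_k$ are both Serre $p$-adic modular forms of weight $k$ with constant term $1$ in their $q$-expansions. For $E_{p-1}^n$ this is immediate, it being classical of weight $k$ and level $1$ with constant term $1$. For $\Es_k$, recall that $(p-1)\mid k$, so $\Es_k = \Es_{(k,0)}$ is a classical modular form of weight $k$ on $\Gamma_0(p)$ with trivial nebentypus and hence lies in $M_k(\Q_p,1)$; its constant term equals $1$, as one sees either from the $q$-expansion of $\Es_{(k,0)}$ or from $\Es_k = (E_k - p^{k-1}V(E_k))/(1-p^{k-1})$, since $E_k$ and $V(E_k)$ both have constant term $1$. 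Therefore
$$
h := E_{p-1}^n - \Es_k \in M_k(\Q_p,1)
$$
has vanishing constant term, i.e.\ $h$ is a cuspidal Serre $p$-adic modular form of weight $k$.

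Next I would use that $H_k$ acts as an additive (indeed linear) operator on $M_k(K,1)$: it is a polynomial in the pairwise commuting operators $U$ and $T_\ell$ ($\ell\neq p$), hence a $\Z_p$-linear combination of compositions of such operators, each of which is linear on $M_k(K,1)$. Since $E_k^\ast = \Es_k$, part (i) of Theorem \ref{projpoly} says exactly that $H_k(\Es_k) = \Es_k$, so $H_k^i(\Es_k) = \Es_k$ for all $i\ge 0$ by an immediate induction. Additivity of $H_k^i$ then gives, for every $i$,
$$
H_k^i(E_{p-1}^n) = H_k^i(\Es_k) + H_k^i(h) = \Es_k + H_k^i(h) .
$$
By part (ii) of Theorem \ref{projpoly}, applied to the cusp form $h$ of weight $k$, we have $H_k^i(h) \to 0$ in $q$-expansions as $i\to\infty$; hence $H_k^i(E_{p-1}^n) \to \Es_k = \Es_{n(p-1)}$, which is the assertion.

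The argument is short, and I do not expect a real obstacle. The only points that deserve a moment's care are the constant-term bookkeeping guaranteeing that $h$ is truly a \emph{cuspidal} $p$-adic modular form of weight $k$ (so that Theorem \ref{projpoly}(ii) is applicable to it) and the trivial but essential remark that a polynomial in the operators $U$, $T_\ell$ acts additively, so that it passes through the decomposition $E_{p-1}^n = \Es_k + h$.
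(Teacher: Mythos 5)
Your proof is correct and follows essentially the same route as the paper: both decompose $E_{p-1}^n = \Es_{n(p-1)} + h$ with $h$ a cuspidal Serre form, apply Theorem \ref{projpoly}(i) to fix $\Es_{n(p-1)}$ under $H_k$ and Theorem \ref{projpoly}(ii) to kill $h$ in the limit. Your extra care about the constant terms and the linearity of $H_k$ is sound but just makes explicit what the paper's terser proof leaves implicit.
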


\begin{proof} Let $H = H_{n(p-1)}$. The Serre $p$-adic modular form $f = E_{p-1}^n - \Es_{n(p-1)}$ is cuspidal of weight $n(p-1)$. By Theorem \ref{projpoly}(ii), we have $H^i(f) \rightarrow 0$, and by Theorem \ref{projpoly}(i), $H^i(f) = H^i(E_{p-1}^n) - \Es_{n(p-1)}$, whence the result.
\end{proof}

\begin{dfn}\label{def:HH_n} For each $n \in \N$, define the operator
$$
\HH_n(f) : M_0(K, 1) \rightarrow M_0(K, 1)
$$
by
$$
\HH_n(f) := \frac{H_{n(p-1)}(f\cdot E_{p-1}^n)}{E_{p-1}^n}
$$
where $H_{n(p-1)}$ is a polynomial as in Theorem \ref{projpoly}.
\end{dfn}

We will need to study the iterations of this twisted operator, $\HH_n$, and specifically its value on $1$. From its definition, the definition of the operators $\U_n$ and $\T_{\ell,n}$ ($\ell$ prime $\neq p$) (Definition \ref{def:twisted_operators}), as well as the fact that the operators $U$ and $T_{\ell}$ commute, it follows that for a given $i\in\N$ we can write $\HH_n^i = \sum_{j = 1}^d F_j \U_n^j$ with the $F_j$ certain polynomials in $\Z_p [\{\T_{\ell,n} \mid \ell \not = p\}]$. To study $\HH_n^i(1)$ we first study iterates of $\U_n$ as well as polynomials in the $\T_{\ell,n}$. The following congruence, a simple consequence of the von Staudt--Clausen theorem, will be crucial for that purpose.

\begin{lem}\label{elem_cong_eis_p2} For $n \in \N$ we have
$$
e_n \equiv \es_n \equiv 1 \pmod{p^2}.
$$
\end{lem}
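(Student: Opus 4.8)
The plan is to reduce the lemma to the single $q$-expansion congruence
$$
E_{n(p-1)} \equiv E_{p-1}^n \pmod{p^2} \qquad \text{in } \Z_p[[q]].
$$
Granting this, the statement for $e_n$ is immediate: since $E_{p-1}\equiv 1\pmod p$, the series $E_{p-1}^n$ is a $1$-unit in $\Z_p[[q]]$, and dividing the displayed congruence by it gives $e_n = E_{n(p-1)}/E_{p-1}^n \equiv 1\pmod{p^2}$. For $\es_n$ I would first observe that $\Es_{n(p-1)}\equiv E_{n(p-1)}\pmod{p^2}$: writing $k=n(p-1)\ge 4$ we have $\Es_k = (E_k - p^{k-1}V(E_k))/(1-p^{k-1})$ with $v_p(p^{k-1})=k-1\ge 2$, so both $p^{k-1}V(E_k)$ and the correction from $(1-p^{k-1})^{-1}=1+p^{k-1}+\cdots$ vanish mod $p^2$ (here $E_k$, hence $V(E_k)$, is $p$-integral because $v_p(2k/B_k)\ge 1$). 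Dividing by $E_{p-1}^n$ then yields $\es_n\equiv e_n\equiv 1\pmod{p^2}$.

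It remains to establish the displayed congruence, and this is where von Staudt--Clausen enters. For every $k\in(p-1)\N$ the theorem shows that the only prime in the denominator of $B_k$ is $p$ and that $v_p(B_k)=-1$; more precisely $pB_k\equiv -1\pmod p$. Hence $v_p(2k/B_k)=1+v_p(k)\ge 1$, so one may write $E_{p-1}=1+pg$ and $E_{n(p-1)}=1+ph_n$ with $g,h_n\in\Z_p[[q]]$ (indeed with $\Z_{(p)}$-coefficients). The binomial theorem gives $E_{p-1}^n=(1+pg)^n\equiv 1+npg\pmod{p^2}$, so the desired congruence is equivalent to
$$
h_n \equiv ng \pmod p .
$$

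To prove this last congruence I would compare coefficients directly. We have $g = -\tfrac{2(p-1)}{pB_{p-1}}\sum_{m\ge 1}\sigma_{p-2}(m)q^m$ and $h_n = -\tfrac{2n(p-1)}{pB_{n(p-1)}}\sum_{m\ge 1}\sigma_{n(p-1)-1}(m)q^m$. Using $pB_k\equiv -1\pmod p$ one gets $\tfrac{2(p-1)}{pB_{p-1}}\equiv 2\pmod p$ and $\tfrac{2n(p-1)}{pB_{n(p-1)}}\equiv 2n\pmod p$. For the divisor sums, Fermat's little theorem gives $d^{\,n(p-1)-1}\equiv d^{\,p-2}\pmod p$ for every $d$ (both are $\equiv d^{-1}$ if $p\nmid d$ and $\equiv 0$ if $p\mid d$, using $n(p-1)-1\ge 1$), so $\sigma_{n(p-1)-1}(m)\equiv\sigma_{p-2}(m)\pmod p$. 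Combining, $h_n\equiv -2n\sum_m\sigma_{p-2}(m)q^m\equiv n\bigl(-2\sum_m\sigma_{p-2}(m)q^m\bigr)\equiv ng\pmod p$, which completes the argument.

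I do not expect any real obstacle here; the proof is entirely elementary once von Staudt--Clausen is invoked. The only points demanding a little care are the $p$-integrality of every fraction $2k/B_k$ that appears (guaranteed by $v_p(2k/B_k)\ge 1$), the Fermat reduction of the power-divisor sums, and the small-index bookkeeping (for instance, for $\es_n$ with $n=1$ one needs $v_p(p^{\,p-2})\ge 2$, which holds since $p\ge 5$).
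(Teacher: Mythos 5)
Your proposal is correct and follows essentially the same route as the paper: reduce to $E_{n(p-1)}\equiv E_{p-1}^n\pmod{p^2}$ (disposing of the passage from $E_{n(p-1)}$ to $\Es_{n(p-1)}$ via $v_p(p^{n(p-1)-1})\ge 2$), then combine von Staudt--Clausen for the Bernoulli denominators with Fermat's little theorem for $\sigma_{n(p-1)-1}\equiv\sigma_{p-2}\pmod p$ and the binomial congruence $(1+pg)^n\equiv 1+npg\pmod{p^2}$. The only cosmetic difference is that you normalize by one power of $p$ and compare $h_n$ with $ng$ mod $p$, whereas the paper compares $1/B_{p-1}$ with $1/B_{n(p-1)}$ mod $p^2$ directly; these are the same computation.
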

\begin{proof} Recall our assumption that $p\ge 5$. By definition,
$$
E_{n(p-1)}^\ast = \frac{E_{n(p-1)} - p^{n(p-1)-1}V(E_{n(p-1)})}{1 - p^{n(p-1)-1}}
$$
and therefore $E_{n(p-1)}^\ast \equiv E_{n(p-1)} \pmod{p^2}$. So it is enough to show that
$$
E_{n(p-1)} \equiv E_{p-1}^n \pmod{p^2}.
$$

By the von Staudt--Clausen theorem, we have
$$
B_{p-1}, B_{n(p-1)} \equiv \frac{-1}{p} \pmod{\Z_{p}}
$$
which means that $B_{p-1} = \frac{-u}{p}$ and $B_{n(p-1)} = \frac{-v}{p}$ for some $1$-units $u,v \in \Z_{p}$. Hence,
$$
\frac{1}{B_{p-1}} = -pu^{-1} \equiv -pv^{-1} = \frac{1}{B_{n(p-1)}} \pmod{p^2 \Z_p}
$$
as $u^{-1}, v^{-1}$ are again $1$-units and hence $\equiv 1 \pmod{p\Z_{p}}$. Now, computing modulo $p^2 \Z_{p}$ we find
\begin{eqnarray*}
E_{n(p-1)} &=& 1 - \frac{2n(p-1)}{B_{n(p-1)}}\sum_{i \geq 1} \sigma_{n(p-1)-1}(i)q^i \equiv 1 - \frac{2n(p-1)}{B_{p-1}}\sum_{i \geq 1} \sigma_{n(p-1)-1}(i)q^i \\
&\equiv& 1 - \frac{2n(p-1)}{B_{p-1}}\sum_{i \geq 1}\sigma_{p-2}(i)q^i \equiv \left(1 - \frac{2(p-1)}{B_{p-1}}\sum_{i \geq 1}\sigma_{p-2}(i)q^i\right)^n \\
&=& E_{p-1}^n \pmod{p^2 \Z_p} \\
\end{eqnarray*}
using $\frac{1}{B_{p-1}} \equiv 0 \pmod{p\Z_{p}}$ as well as
$$
\sigma_{n(p-1)-1}(i) \equiv \sigma_{p - 2}(i) \pmod{p\Z_{p}}
$$
by Fermat's little theorem.
\end{proof}

As will be seen, the congruence is used in the proof of the following lemma.

\begin{lem}\label{twisted_operators_on_1} Let $n, i \in \N$ and $F \in \Z_{p}[\{\T_{\ell,n}\mid \ell \not = p\}]$. Let $c \in \Z_{p}$ be defined by $F(\es_n) = c\cdot\es_n$ (recall Proposition \ref{prop:es_n_eigenfct}.) Then:
\begin{enumerate}[(i)]
\item
$$
\U_n^i(1) \in \U_n(1) + M_0\left(\Z_p, \geq \frac{p}{p+1}\right),
$$
\item
$$
F(\U_n(1)) \in c\cdot \U_n(1) + M_0\left(\Z_p, \geq \frac{p}{p+1}\right),
$$
\item
$$
F(\U_n^i(1)) \in c\cdot \U_n(1) + M_0\left(\Z_p, \geq \frac{p}{p+1}\right).
$$
\end{enumerate}
\end{lem}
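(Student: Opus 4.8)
\section*{Proof proposal for Lemma \ref{twisted_operators_on_1}}

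The three statements are established in the order (i), (ii), (iii), the last being a formal consequence of the first two. Dispatching (iii) first: since $F$ is a polynomial in the pairwise commuting $\Z_p$-linear operators $\T_{\ell,n}$, it is itself a $\Z_p$-linear operator, and by the Integrality Lemma \ref{integrality_lemma}(ii) it, as well as $c\cdot\mathrm{id}-F$, carries $M_0(\Z_p,\ge\frac p{p+1})$ into itself. Applying $F$ to the containment in (i) and then invoking (ii),
\[
F(\U_n^i(1))\in F(\U_n(1))+M_0\!\left(\Z_p,\ge\tfrac p{p+1}\right)\subseteq c\cdot\U_n(1)+M_0\!\left(\Z_p,\ge\tfrac p{p+1}\right),
\]
which is (iii). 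Throughout, I would record the following elementary observation once: if $f\in M_0(\Z_p,\ge\frac p{p+1})$ satisfies $f\equiv 0\pmod{p^2}$ in $q$-expansion, then the Katz coefficients $b_j$ of $f$ obey $v_p(b_j)\ge\max(2,\frac p{p+1}j)$ (Propositions \ref{prop:katz_exp_classical_form}--\ref{prop:congr_in_q-exp_implies_congr_for Katz_exp}), hence $v_p(b_j/p)\ge\max(1,\frac p{p+1}j-1)\ge\frac1{p+1}j$ for all $j$ (using $p\ge 5$); thus $f/p\in M_0(\Z_p,\ge\frac1{p+1})$ and therefore $\U_n(f)=p\,\U_n(f/p)\in M_0(\Z_p,\ge\frac p{p+1})$ by Integrality Lemma \ref{integrality_lemma}(i). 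This is the mechanism by which the congruence of Lemma \ref{elem_cong_eis_p2} absorbs the factor $\frac1p$ that $\U_n$ would otherwise introduce, and it is used in both (i) and (ii).

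For (ii) I would exploit that $\es_n$ is a $\U_n$-eigenfunction with eigenvalue $1$ (Proposition \ref{prop:es_n_eigenfct}): writing $\U_n(1)=\es_n-\U_n(\es_n-1)$, applying the linear operator $F$, using $F(\es_n)=c\es_n$, and subtracting $c\U_n(1)=c\es_n-c\U_n(\es_n-1)$ yields
\[
F(\U_n(1))-c\U_n(1)=(c\cdot\mathrm{id}-F)\bigl(\U_n(\es_n-1)\bigr).
\]
Since $c\cdot\mathrm{id}-F$ preserves $M_0(\Z_p,\ge\frac p{p+1})$, it suffices to show $\U_n(\es_n-1)\in M_0(\Z_p,\ge\frac p{p+1})$. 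Here Lemma \ref{elem_cong_eis_p2} gives $\es_n-1\equiv 0\pmod{p^2}$; combining this with the overconvergence of $\es_n-1$ (it is $r$-overconvergent for $r=1$, being a combination of $e_n$ and $p^{n(p-1)-1}V(E_{n(p-1)})/E_{p-1}^n$, cf.\ Lemma \ref{suppintlemma}) and the observation of the previous paragraph — applied, after reducing to an auxiliary level $N\ge 3$, to the relevant $p^2$-divisible pieces — produces the claim. The point to verify carefully is that the two surplus powers of $p$ suffice to control the constant region (small Katz indices) while the $p$-adic estimates of Lemma \ref{suppintlemma} control the tail.

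For (i) I would proceed by induction on $i$, the case $i=1$ being trivial. Writing $\epsilon_0:=\U_n(1)-1$, one checks inductively that $\U_n^{i+1}(1)-\U_n^i(1)=\U_n^i(\epsilon_0)$, so that $\U_n^i(1)-\U_n(1)=\sum_{k=1}^{i-1}\U_n^k(\epsilon_0)$ and it is enough to prove $\U_n^k(\epsilon_0)\in M_0(\Z_p,\ge\frac p{p+1})$ for every $k\ge 1$. The key preliminary facts about $\epsilon_0$ are: (a) $\epsilon_0\in M_0(\Z_p,\ge\frac p{p+1})$, and (b) $\epsilon_0\equiv 0\pmod{p^2}$. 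For (a), Coleman's trick \cite[(3.3)]{coleman_classical} gives $\U_n(h)=U\!\bigl(h\,(E_{p-1}/V(E_{p-1}))^n\bigr)$, and the Coleman--Wan theorem, upgraded (by inspecting its proof, or via von Staudt--Clausen as in Lemma \ref{elem_cong_eis_p2}, the coefficients being in $\Z_p$) to $(E_{p-1}/V(E_{p-1}))^n-1\in pM_0(\Z_p,\ge\frac1{p+1})$, yields $\epsilon_0=U\!\bigl((E_{p-1}/V(E_{p-1}))^n-1\bigr)\in M_0(\Z_p,\ge\frac p{p+1})$ via Integrality Lemma \ref{integrality_lemma}(i), the explicit $p$ cancelling the $\frac1p$; for (b), $\U_n(1)\equiv\es_n\equiv 1\pmod{p^2}$ by Lemma \ref{elem_cong_eis_p2} (using that $E_{p-1}$ is a $T_p$-eigenform with eigenvalue $\equiv 1\pmod{p^{p-2}}$, whence $U$ fixes $E_{p-1}-1$ modulo a high power of $p$). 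Given (a) and (b), the mechanism of the first paragraph gives $\U_n(\epsilon_0)\in M_0(\Z_p,\ge\frac p{p+1})$, which handles $k=1$. \textbf{The main obstacle is the inductive step for $k\ge 2$}: a single application of $\U_n$ consumes one of the two spare powers of $p$, so $\U_n(\epsilon_0)$ is only known to be $\equiv 0\pmod p$, and the bare Integrality Lemma \ref{integrality_lemma}(i) then re-introduces a factor $\frac1p$ at the next step (indeed, by the remark after Theorem \ref{thm:es_n_oc} the $\frac1p$ there is genuine). The crux is therefore to show that these denominators do not accumulate along the orbit $\U_n^k(\epsilon_0)$, i.e.\ that all iterates remain in the denominator-free module $M_0(\Z_p,\ge\frac p{p+1})$; I expect this to require working with the modular definition of $\U_n$ (test objects and the canonical subgroup, via the identity comparing $E_{p-1}$ on an elliptic curve with its value on a quotient, as in the proof of Lemma \ref{suppintlemma}), exploiting once more the precise numerology $p\cdot\frac1{p+1}=\frac p{p+1}$ together with the cancellation between the polar parts of successive iterates, rather than $q$-expansion manipulations alone.
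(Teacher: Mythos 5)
Your part (iii) and your basic mechanism are exactly the paper's: an element of $\tfrac1p M_0(\Z_p,\ge\tfrac p{p+1})$ whose $q$-expansion is $\equiv 0\pmod{p^2}$ has Katz coefficients $b_t$ with $v_p(b_t)\ge\max(2,\tfrac{pt}{p+1}-1)\ge 1+\tfrac t{p+1}$, hence lies in $pM_0(\Z_p,\ge\tfrac1{p+1})$, and one application of $\U_n$ returns it to $M_0(\Z_p,\ge\tfrac p{p+1})$. But you do not close (i). The observation you are missing is that the mod-$p^2$ congruence is \emph{not} consumed by iterating $\U_n$: since $\U_n$ is built from $U$ and multiplication/division by the $1$-unit power series $E_{p-1}^n$, it preserves congruences of $q$-expansions, and since $\U_n(\es_n)=\es_n$ with $\es_n\equiv 1\pmod{p^2}$, one gets $\U_n^i(1)\equiv\U_n^i(\es_n)=\es_n\equiv1\pmod{p^2}$ for \emph{every} $i$. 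The paper therefore runs the induction on the whole quantity $\U_n^i(1)-1$ (not on the increments $\U_n^k(\epsilon_0)$): the inductive hypothesis plus Lemma \ref{integrality_lemma}(i) put it in $\tfrac1p M_0(\Z_p,\ge\tfrac p{p+1})$, the congruence and Proposition \ref{prop:congr_in_q-exp_implies_congr_for Katz_exp} put its Katz coefficients $\ge2$, the mechanism gives $\U_n^i(1)-1\in pM_0(\Z_p,\ge\tfrac1{p+1})$, and one more $\U_n$ yields $\U_n^{i+1}(1)-\U_n(1)\in M_0(\Z_p,\ge\tfrac p{p+1})$. No test-object or canonical-subgroup argument is needed; your assertion that ``$\U_n(\epsilon_0)$ is only known to be $\equiv0\pmod p$'' is simply not true. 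Your auxiliary claim (a), that $\epsilon_0=\U_n(1)-1\in M_0(\Z_p,\ge\tfrac p{p+1})$, is both unnecessary (the paper's base case $i=1$ is vacuous) and unproven: from $E_{p-1}/V(E_{p-1})\equiv1\pmod p$ together with overconvergence at rate $\tfrac1{p+1}$ you only get Katz bounds $\max(1,\tfrac t{p+1})$, which falls short of the $1+\tfrac t{p+1}$ required for membership in $pM_0(\Z_p,\ge\tfrac1{p+1})$.

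Part (ii) has a more serious problem: your reduction to $\U_n(\es_n-1)=\es_n-\U_n(1)\in M_0(\Z_p,\ge\tfrac p{p+1})$ is circular, since that containment is precisely the conclusion of Proposition \ref{ens_oc}, which this lemma exists to prove. Nor does your sketch of it close: at this stage the Katz coefficients of $\es_n-1$ are only known to satisfy $v_p(b_t)\ge2$ together with a tail bound from Lemma \ref{suppintlemma} that controls only the indices beyond $n$, so for $p+1<t\le n$ (with $n$ large) neither bound reaches $1+\tfrac t{p+1}$. The paper instead applies the mechanism to $F(1)-c$: Lemma \ref{integrality_lemma}(ii) gives $F(1)\in M_0(\Z_p,\ge\tfrac p{p+1})$ outright, and $F(1)\equiv F(\es_n)=c\,\es_n\equiv c\pmod{p^2}$, whence $F(1)-c\in pM_0(\Z_p,\ge\tfrac1{p+1})$; applying $\U_n$ and commuting $F$ past $\U_n$ then gives (ii) without ever invoking the overconvergence of $\es_n$ itself.
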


\begin{proof}
\noindent (i) We show this by induction on $i$. For $i = 1$, there is nothing to show. Suppose $i \geq 1$ and
$$
\U_n^i (1) \in \U_n(1) + M_0\left(\Z_p, \geq \frac{p}{p+1}\right).
$$

In particular, by the Integrality Lemma, Lemma \ref{integrality_lemma}, we have
$$
\U_n^i(1) \in \frac{1}{p}M_0\left(\Z_p, \geq \frac{p}{p+1}\right) ,
$$
and hence a Katz expansion
$$
\U_n^i(1) = \sum_{t \geq 0} \frac{b_t}{E_{p-1}^t}
$$
with $v_p(b_t) \geq \frac{pt}{p+1} - 1$ for all $t \geq 0$.

By Lemma \ref{elem_cong_eis_p2} we have $\es_n \equiv 1 \pmod{p^2}$ (in $q$-expansions), or, equivalently, $E_{n(p-1)}^{\ast} \equiv E_{p-1}^n \pmod{p^2}$ since $E_{p-1} \equiv 1 \pmod{p}$. Since the classical $U$ operator preserves congruences between $q$-expansions, using Proposition \ref{prop:es_n_eigenfct} as well as the definition of $\U_n$ we find $\es_n = \U_n(\es_n) \equiv \U_n(1) \pmod{p^2}$. Similarly, the congruence $e_n\equiv 1 \pmod{p^2}$ gives $\U_n(1) \equiv 1 \pmod{p^2}$. By induction on $i$ we obtain:
$$
\U_n^i(1) - 1 \equiv \U_n^i(\es_n) - \es_n \equiv 0 \pmod{p^2} .
$$

Applying Proposition \ref{prop:congr_in_q-exp_implies_congr_for Katz_exp}, we find in particular that $v_p(b_0-1) \ge 2 > 1$, as well as
$$
v_p(b_t) \geq 2 \geq 1 + \frac{t}{p+1},
$$
for $t=1,2$.

Furthermore, if $t \geq 3$ then $t \geq \frac{2(p+1)}{p-1}$ (as $p\ge 5$), and hence
$$
v_p(b_t) - \left( 1 + \frac{t}{p+1}\right) \geq \frac{pt}{p+1}-1 -1 - \frac{t}{p+1} = \frac{(p-1)t - 2(p+1)}{p+1} \geq 0.
$$

We conclude that
$$
\U_n^i(1) - 1 \in pM_0\left(\Z_p, \geq \frac{1}{p+1}\right).
$$

Applying the $\U_n$ operator and using Lemma \ref{integrality_lemma} again, we obtain
$$
\U_n^{i+1}(1) - \U_n(1) \in M_0\left(\Z_p, \geq \frac{p}{p+1}\right),
$$
and we are done.

\noindent (ii) Let $F \in \Z_{p}[\{\T_\ell: \ell \not = p\}]$ and let $c \in \Z_{p}$ such that $F(\es_n) = c\cdot\es_n$. By the Integrality Lemma (Lemma \ref{integrality_lemma}),
$$
F(1) \in M_0\left(\Z_p, \geq \frac{p}{p+1}\right)
$$
and therefore for the Katz expansion
$$
F(1) = \sum_{t \geq 0} \frac{b_t}{E_{p-1}^t}
$$
we have $v_p(b_t) \geq \frac{pt}{p+1}$. By Lemma \ref{elem_cong_eis_p2} we have
$$
F(1) \equiv F(\es_n) \equiv c \pmod{p^2} ,
$$
and hence $v_p(b_0 - c) \geq 2 > 1$ and $v_p(b_1) \geq 2 > 1 + \frac{1}{p+1}$, again by Proposition \ref{prop:congr_in_q-exp_implies_congr_for Katz_exp}. Furthermore, if $t \ge 2$ then $t > \frac{p+1}{p-1}$ and hence
$$
v_p(b_t) - \left(1 + \frac{t}{p+1}\right) \geq \frac{pt}{p+1} - \left(1 + \frac{t}{p+1}\right) \ge \frac{(p-1)t-(p+1)}{p+1} > 0.
$$

We get
$$
F(1) - c \in pM_0\left(\Z_p, \geq \frac{1}{p+1}\right).
$$

Applying $\U_n$ and once more using Lemma \ref{integrality_lemma}, we get
$$
\U_n(F(1)) \in c\cdot\U_n(1) + M_0\left(\Z_p, \geq \frac{p}{p+1}\right).
$$

As the operators $\U_n$ and the $\T_{\ell,n}$ commute, so do $\U_n$ and $F$, and we are done.

\noindent (iii) By part (i),
$$
\U_n^i(1) \in \U_n(1) + M_0\left(\Z_p, \geq \frac{p}{p+1}\right) ,
$$
and so by Lemma \ref{integrality_lemma},
$$
F(\U_n^i(1)) \in F(\U_n(1)) + M_0\left(\Z_p, \geq \frac{p}{p+1}\right).
$$

By part (ii), we then obtain
$$
F(\U_n^i(1)) \in c\cdot\U_n(1) + M_0\left(\Z_p, \geq \frac{p}{p+1}\right).
$$
\end{proof}

\begin{prop}\label{ens_oc} For any $n,i \in \N$, we have
$$
\HH_n^i(1) \in \U_n(1) + M_0\left(\Z_p, \geq \frac{p}{p+1}\right) ,
$$
and consequently,
$$
e_n^\ast \in \U_n(1) + M_0\left(\Z_p, \geq\frac{p}{p+1}\right).
$$
\end{prop}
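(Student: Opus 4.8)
The plan is to reduce both assertions to the lemmas on twisted operators already in place: the first by expanding $\HH_n^i$ as a $\Z_p$-combination of twisted operators and applying Lemma \ref{twisted_operators_on_1}(iii) term by term, the second by a limiting argument.

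First I would record the shape of $\HH_n^i$ (recall Definition \ref{def:HH_n}). By Theorem \ref{projpoly} the polynomial $H_{n(p-1)}$ lies in $U\Z_p[U,\{T_\ell\mid\ell\ne p\}]$, so it has no constant term and we may write $H_{n(p-1)}=\sum_{j\ge 1}P_j U^j$ with $P_j\in\Z_p[\{T_\ell\}]$. Using that $U$ commutes with the $T_\ell$, that $\U_n^j(f)=U^j(fE_{p-1}^n)/E_{p-1}^n$, and the analogous identity for the $\T_{\ell,n}$, this translates into $\HH_n=\sum_{j\ge 1}\widetilde P_j\,\U_n^j$ with $\widetilde P_j\in\Z_p[\{\T_{\ell,n}\mid\ell\ne p\}]$; iterating and using that $\U_n$ commutes with the $\T_{\ell,n}$, one gets for every $i$ a finite expansion $\HH_n^i=\sum_{j\ge 1}F_j\,\U_n^j$ with $F_j\in\Z_p[\{\T_{\ell,n}\mid\ell\ne p\}]$. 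The key point is that every summand still carries at least one factor of $\U_n$, which is exactly the hypothesis needed to apply Lemma \ref{twisted_operators_on_1}(iii) to each term.

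Next, for each $j$ let $c_j\in\Z_p$ be defined by $F_j(\es_n)=c_j\es_n$ (Proposition \ref{prop:es_n_eigenfct}). Applying Lemma \ref{twisted_operators_on_1}(iii) termwise and summing yields
$$
\HH_n^i(1)\in\Big(\sum_j c_j\Big)\U_n(1)+M_0\!\left(\Z_p,\ge\tfrac{p}{p+1}\right).
$$
To identify $\sum_j c_j$ I would evaluate $\HH_n^i$ at $\es_n$ in two ways: since $\U_n\es_n=\es_n$, the expansion above gives $\HH_n^i(\es_n)=\big(\sum_j c_j\big)\es_n$; on the other hand $\HH_n^i(\es_n)=H_{n(p-1)}^i(\Es_{n(p-1)})/E_{p-1}^n=\es_n$, because $H_{n(p-1)}$ fixes $\Es_{n(p-1)}=E_{n(p-1)}^{\ast}$ by Theorem \ref{projpoly}(i). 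As $\es_n\ne 0$ in the domain $M_0(K,1)$ (the $q$-expansion map being injective), this forces $\sum_j c_j=1$, giving the first assertion.

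For the ``consequently'' part I would pass to the limit. Dividing the convergence statement of Lemma \ref{lem:limit_H_n(p-1)^i_on_E(p-1)^n} by the $1$-unit $E_{p-1}^n$ shows $\HH_n^i(1)\to e_n^{\ast}$ in $q$-expansions. Hence $\HH_n^i(1)-\U_n(1)$ is a sequence in $M_0(\Z_p,\ge\frac{p}{p+1})$ converging in $q$-expansions to $e_n^{\ast}-\U_n(1)$, so Corollary \ref{cor:conv_in_qexp} (equivalently, the closedness of $M_0(\Z_p,\ge\frac{p}{p+1})$ in $M_0(\Z_p,1)$ from Proposition \ref{prop:conv_in_basis_closedness}) shows the limit again lies in $M_0(\Z_p,\ge\frac{p}{p+1})$, i.e.\ $e_n^{\ast}\in\U_n(1)+M_0(\Z_p,\ge\frac{p}{p+1})$. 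The only steps requiring real care are the validity of the operator expansion — which rests on $H_{n(p-1)}$ having no constant term, so that Lemma \ref{twisted_operators_on_1}(iii) applies to every summand — and the computation $\sum_j c_j=1$; the rest is routine assembly, so I do not expect a serious obstacle.
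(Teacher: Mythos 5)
Your proposal is correct and follows essentially the same route as the paper: the same expansion $\HH_n^i=\sum_{j\ge 1}F_j\U_n^j$ (using that $H_{n(p-1)}$ has no constant term), termwise application of Lemma \ref{twisted_operators_on_1}(iii), identification of $\sum_j c_j=1$ by evaluating at $\es_n$, and the limiting argument via Lemma \ref{lem:limit_H_n(p-1)^i_on_E(p-1)^n} and Corollary \ref{cor:conv_in_qexp}. No gaps.
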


\begin{proof} As we argued immediately after the definition of $\HH_n$ (cf.\ Definition \ref{def:HH_n} as well as Theorem \ref{projpoly}), we may write
$$
\HH_n^i = \sum_{j = 1}^d F_j \U_n^j
$$
for some $d$ and with certain polynomials $F_j \in \Z_p [\{\T_{\ell,n} \mid \ell \not = p\}]$. For each $j \in \{1, \ldots, d\}$, let $c_j \in \Z_{p}$ such that $F_j(\es_n) = c_j \es_n$. Put $c := \sum_{j=1}^d c_j$. By Lemma \ref{twisted_operators_on_1}, we have
$$
F_j(\U_n^j (1)) \in c_j\cdot \U_n(1) + M_0\left(\Z_p, \geq \frac{p}{p+1}\right)
$$
for $j=1,\ldots,d$ and hence
$$
\HH_n^i(1) \in c\cdot\U_n(1) + M_0\left(\Z_p, \geq \frac{p}{p+1}\right).
$$

But, again by the definition of $\HH_n$ and the fact that $\U_n \es_n = \es_n$ (Proposition \ref{prop:es_n_eigenfct}), we have
$$
\es_n = \HH_n(\es_n) = c\cdot\es_n
$$
and therefore $c = 1$.

By the definition of $\HH_n$ and Lemma \ref{lem:limit_H_n(p-1)^i_on_E(p-1)^n} we have
$$
\es_n = \lim_{i \rightarrow \infty} \HH_n^i(1)
$$
in $q$-expansions, and so Corollary \ref{cor:conv_in_qexp} implies
$$
e_n^\ast \in \U_n(1) + M_0\left(\Z_p, \geq\frac{p}{p+1}\right).
$$
\end{proof}

\subsection{Overconvergence of \texorpdfstring{$e_n$}{en}}\label{subsec:overconvergence_en} In this subsection and further down below the following definition will be convenient.

\begin{dfn}\label{def:T_{p,n}} For $n\in\N$ define
$$
\T_{p,n}(1) = \frac{T_p(E_{p-1}^n)}{E_{p-1}^n}
$$
where $T_p$ is the $p$th Hecke operator acting on classical forms of level $1$ and weight $n(p-1)$. Thus, in concrete terms,
$$
\T_{p,n}(1) = \frac{U(E_{p-1}^n) + p^{n(p-1)-1}V(E_{p-1}^n)}{E_{p-1}^n} .
$$
\end{dfn}

\begin{prop}\label{comparing_Tp_en} For any $n \in \N$ we have
$$
e_n \in \frac{1}{p}M_0\left(\Z_p, \geq \frac{p-1}{p} \right).
$$
and
$$
e_n \in \T_{p,n}(1) + M_0\left(\Z_p, \geq \frac{p}{p+1} \right).
$$
\end{prop}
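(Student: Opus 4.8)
The plan is to work from the two identities
\[
e_n = (1-p^{n(p-1)-1})\es_n + p^{n(p-1)-1}\frac{V(E_{n(p-1)})}{E_{p-1}^n}, \qquad \T_{p,n}(1) = \U_n(1) + p^{n(p-1)-1}\frac{V(E_{p-1}^n)}{E_{p-1}^n},
\]
the first being $\Es_{n(p-1)} = (E_{n(p-1)}-p^{n(p-1)-1}V(E_{n(p-1)}))/(1-p^{n(p-1)-1})$ divided through by $E_{p-1}^n$, and the second being the relation $T_p = U + p^{n(p-1)-1}V$ on classical forms of weight $n(p-1)$ together with Definitions \ref{def:twisted_operators} and \ref{def:T_{p,n}}. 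I shall use that $\es_n \in \U_n(1) + M_0(\Z_p,\ge\frac{p}{p+1})$ (Proposition \ref{ens_oc}), that $\U_n(1) \in \frac{1}{p} M_0(\Z_p,\ge\frac{p}{p+1})$ (apply Lemma \ref{integrality_lemma}(i) to $1 \in M_0(\Z_p,\ge\frac{1}{p+1})$), and hence that $\es_n \in \frac{1}{p} M_0(\Z_p,\ge\frac{p}{p+1})$. The first statement is then immediate: in the first identity $(1-p^{n(p-1)-1})\es_n \in \frac{1}{p}M_0(\Z_p,\ge\frac{p}{p+1}) \subseteq \frac{1}{p}M_0(\Z_p,\ge\frac{p-1}{p})$, while Lemma \ref{suppintlemma}(i) with $f=E_{n(p-1)}$ gives $p^{n(p-1)-1}\frac{V(E_{n(p-1)})}{E_{p-1}^n} = \frac{1}{p}\cdot\frac{p^{n(p-1)}V(E_{n(p-1)})}{E_{p-1}^n} \in \frac{1}{p}M_0(\Z_p,\ge\frac{p-1}{p})$, whence $e_n \in \frac{1}{p}M_0(\Z_p,\ge\frac{p-1}{p})$.

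For the second statement, subtract the two identities (using additivity of $V$):
\[
e_n - \T_{p,n}(1) = (\es_n - \U_n(1)) - p^{n(p-1)-1}\es_n + p^{n(p-1)-1}\frac{V(E_{n(p-1)} - E_{p-1}^n)}{E_{p-1}^n}.
\]
The first summand lies in $M_0(\Z_p,\ge\frac{p}{p+1})$ by Proposition \ref{ens_oc}; the second equals $-p^{n(p-1)-2}$ times $p\es_n \in M_0(\Z_p,\ge\frac{p}{p+1})$, hence also lies there since $n(p-1)-2 \ge 2$. So everything reduces to the third summand.

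By Lemma \ref{elem_cong_eis_p2}, $E_{n(p-1)} \equiv E_{p-1}^n \pmod{p^2}$ in $q$-expansions, so by the $q$-expansion principle $E_{n(p-1)} - E_{p-1}^n = p^2 h$ for a classical form $h \in M_{n(p-1)}(\Z_p)$; the third summand is then $\frac{p^{n(p-1)}V(ph)}{E_{p-1}^n}$, and the idea is to obtain it from Lemma \ref{suppintlemma}(ii) applied to the \emph{integral} classical form $ph$. This needs $\frac{ph}{E_{p-1}^n} = \frac{e_n-1}{p} \in M_0(\Z_p,\ge\frac{1}{p+1})$. Write the (finite) Katz expansion $e_n = \sum_{i\ge0}\frac{\beta_i}{E_{p-1}^i}$ (Proposition \ref{prop:katz_exp_classical_form}). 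The already-proved first statement gives $v_p(\beta_i)\ge\frac{(p-1)i}{p}-1$, while $e_n\equiv1\pmod{p^2}$ and Proposition \ref{prop:congr_in_q-exp_implies_congr_for Katz_exp} give $v_p(\beta_0-1)\ge2$ and $v_p(\beta_i)\ge2$ for $i\ge1$. Combining these bounds, $v_p(\beta_i)\ge1+\frac{i}{p+1}$ for all $i\ge1$: when $i\le p+1$ this reads $\le2\le v_p(\beta_i)$, and when $i>p+1$ it follows from $\frac{(p-1)i}{p}-1\ge1+\frac{i}{p+1}$, which holds because $p+1\ge\frac{2p(p+1)}{p^2-p-1}$, i.e.\ $p^2-3p-1\ge0$, valid for $p\ge5$. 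Hence the Katz coefficients $\frac{\beta_0-1}{p}$ and $\frac{\beta_i}{p}$ ($i\ge1$) of $\frac{e_n-1}{p}$ are integral with valuations $\ge0$ and $\ge\frac{i}{p+1}$ respectively, so $\frac{e_n-1}{p}\in M_0(\Z_p,\ge\frac{1}{p+1})$, and Lemma \ref{suppintlemma}(ii) yields $\frac{p^{n(p-1)}V(ph)}{E_{p-1}^n}\in M_0(\Z_p,\ge\frac{p}{p+1})$. Therefore $e_n-\T_{p,n}(1)\in M_0(\Z_p,\ge\frac{p}{p+1})$.

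The one genuinely delicate step is the third summand. Every soft estimate — Lemma \ref{suppintlemma}(i), or the first statement — puts $p^{n(p-1)-1}\frac{V(E_{n(p-1)}-E_{p-1}^n)}{E_{p-1}^n}$ only into $\frac{1}{p}M_0(\Z_p,\ge\frac{p-1}{p})$, and the whole content of the second statement is that the factor $\frac{1}{p}$ can be absorbed while the rate is simultaneously improved to $\frac{p}{p+1}$. The device that makes this work is to apply Lemma \ref{suppintlemma}(ii) to $ph$ rather than to $h$; this is legitimate precisely because $e_n-1\in pM_0(\Z_p,\ge\frac{1}{p+1})$, and establishing that divisibility by playing the von Staudt--Clausen congruence (Lemma \ref{elem_cong_eis_p2}) against the first statement at the level of Katz coefficients is the heart of the matter.
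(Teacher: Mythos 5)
Your proposal is correct and follows essentially the same route as the paper's proof: Proposition \ref{ens_oc} plus Lemmas \ref{integrality_lemma} and \ref{suppintlemma}(i) for the first statement, then the interpolation between the Katz-coefficient bound $v_p(\beta_i)\ge\frac{(p-1)i}{p}-1$ and the mod $p^2$ congruence of Lemma \ref{elem_cong_eis_p2} to show $\frac{e_n-1}{p}\in M_0(\Z_p,\ge\frac{1}{p+1})$, followed by Lemma \ref{suppintlemma}(ii) applied to $\frac{E_{n(p-1)}-E_{p-1}^n}{p}$. The differences (explicit subtraction of the two identities rather than a chain of coset manipulations, and the case split at $i\le p+1$ rather than $i\le 3$) are purely organizational.
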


\begin{proof} We showed in Proposition \ref{ens_oc} that
$$
\es_n \in \U_n(1) + M_0\left(\Z_p, \geq \frac{p}{p+1} \right).
$$

Unpacking this, it means that
$$
\frac{\Es_{n(p-1)}}{E_{p-1}^n} = \frac{E_{n(p-1)} - p^{n(p-1)-1}V(E_{n(p-1)})}{(1 - p^{n(p-1)-1})E_{p-1}^n} \in \U_n(1) + M_0\left(\Z_p, \geq \frac{p}{p+1} \right).
$$

Since $p\U_n(1) \in M_0\left(\Z_p, \geq \frac{p}{p+1}\right)$ by Lemma \ref{integrality_lemma}, we get
$$
\frac{E_{n(p-1)} - p^{n(p-1)-1}V(E_{n(p-1)})}{E_{p-1}^n} \in \U_n(1) + M_0\left(\Z_p, \geq \frac{p}{p+1} \right)
$$
and hence
$$
e_n \in \U_n(1) + \frac{p^{n(p-1)-1}V(E_{n(p-1)})}{E_{p-1}^n} + M_0\left(\Z_p, \geq \frac{p}{p+1} \right).
$$

Using Lemma \ref{integrality_lemma} and Lemma \ref{suppintlemma} (i), we obtain
$$
e_n \in \frac{1}{p}M_0\left(\Z_p, \geq \frac{p-1}{p} \right).
$$

The Katz expansion of $e_n$ is therefore
$$
e_n = 1 + \sum_{t \geq 1} \frac{b_t}{E_{p-1}^t}
$$
where $v_p(b_t) \ge \frac{t(p-1)}{p} - 1$.

Now, by Lemma \ref{elem_cong_eis_p2} we have $e_n \equiv 1 \pmod{p^2}$. If we combine this with Proposition \ref{prop:congr_in_q-exp_implies_congr_for Katz_exp}, we see that
$$
v_p(b_t) \geq 2 \geq 1 + \frac{t}{p+1}
$$
for $t=1,2,3$. On the other hand, if $t\ge 4$ then one finds that $\frac{t(p-1)}{p} - 1 \ge 1 + \frac{t}{p+1}$ because $p\ge 5$. We conclude that $v_p(b_t) \ge 1 + \frac{t}{p+1}$ for all $t\ge 1$ and thus
$$
\frac{e_n-1}{p} \in M_0\left(\Z_p, \ge \frac{1}{p+1}\right).
$$

Applying Lemma \ref{suppintlemma} (ii) to $f := \frac{E_{n(p-1)} - E_{p-1}^n}{p}$ we then obtain
$$
\frac{p^{n(p-1)-1}V(E_{n(p-1)} - E_{p-1}^n)}{E_{p-1}^n} \in M_0\left(\Z_p, \geq \frac{p}{p+1}\right).
$$

Returning to
$$
e_n \in \U_n(1) + \frac{p^{n(p-1)-1}V(E_{n(p-1)})}{E_{p-1}^n} + M_0\left(\Z_p, \geq \frac{p}{p+1} \right),
$$
we now have
$$
e_n \in \frac{U(E_{p-1}^n)}{E_{p-1}^n} + \frac{p^{n(p-1)-1}V(E_{n(p-1)})}{E_{p-1}^n} + M_0\left(\Z_p, \geq \frac{p}{p+1} \right)
$$
$$
= \T_{p,n}(1) + \frac{p^{n(p-1)-1}V(E_{n(p-1)} - E_{p-1}^n)}{E_{p-1}^n} + M_0\left(\Z_p, \geq \frac{p}{p+1} \right)
$$
$$
= \T_{p,n}(1) + M_0\left(\Z_p, \geq \frac{p}{p+1} \right).
$$
\end{proof}

\section{Proofs of the theorems}\label{sec:proofs}

\subsection{Proof of Theorem \ref{thm:es_n_oc}}\label{subsec:proof_e_n_es_n}

\begin{proof}[Proof of Theorem \ref{thm:es_n_oc}] Combining the last statement of Proposition \ref{ens_oc} with Lem\-ma \ref{integrality_lemma} (i), we obtain
$$
\es_n \in \frac{1}{p} M_0\left(\Z_p,\ge \frac{p}{p+1} \right) .
$$

Lemma \ref{suppintlemma} applied to $f=E_{p-1}^n$ shows that
$$
\frac{p^{n(p-1)}V(E_{p-1}^n)}{E_{p-1}^n} \in M_0\left(\Z_p, \ge \frac{p}{p+1} \right) .
$$

We also have
$$
p\cdot \frac{U(E_{p-1}^n)}{E_{p-1}^n} = p\cdot \U_n(1) \in M_0\left(\Z_p, \ge \frac{p}{p+1} \right)
$$
by Lemma \ref{integrality_lemma}. Thus, the definition of $\T_{p,n}(1)$:
$$
\T_{p,n}(1) = \frac{U(E_{p-1}^n) + p^{n(p-1)-1}V(E_{p-1}^n)}{E_{p-1}^n}
$$
shows that
$$
p\cdot \T_{p,n}(1) \in M_0\left(\Z_p, \ge \frac{p}{p+1} \right)
$$
and then by Proposition \ref{comparing_Tp_en} we have
$$
e_n \in \frac{1}{p} M_0\left(\Z_p,\ge \frac{p}{p+1} \right) .
$$

Consider the Katz expansion $\sum_{t\ge 0} \frac{b_t}{E_{p-1}^t}$ of either $e_n$ or $\es_n$. We have $b_0=1$. Because of the congruences $e_n \equiv \es_n \equiv 1\pmod{p^2}$ from Lemma \ref{elem_cong_eis_p2} combined with Proposition \ref{prop:congr_in_q-exp_implies_congr_for Katz_exp}, we have $v_p(b_t) \ge 2$ for $t\ge 1$. We see that
$$
e_n,\es_n \in M_0\left( \Z_p,\ge \rho \right)
$$
whenever $\rho$ is picked such that either $\rho t \le \frac{pt}{p+1} - 1$ or $\rho t \le 2$ for all $t\ge 1$. If we put
$$
\rho = \frac{2}{3} \cdot \frac{p}{p+1} ,
$$
we will have $\rho t \le \frac{pt}{p+1} - 1$ when $t\ge 4$, and $\rho t \le 2$ when $t\le 3$. The second statement of the theorem follows.
\end{proof}

\subsection{Proof of Theorem \ref{thm:V(E)/E_oc}} We now prove Theorem \ref{thm:V(E)/E_oc}. We first notice the following immediate corollary to Theorem \ref{thm:es_n_oc}.

\begin{cor}\label{cor:en_esn_1-units} Let $u$ with $0 \le u < \frac{2}{3}\cdot \frac{p}{p+1}$ be given. Then for any sufficiently large finite extension $K/\Q_p$ with ring of integers $O$ we have that $e_n$ and $\es_n$ both are $1$-units in $M_0(O,r)$ whenever $r\in O$ with $v_p(r) \le u$.
\end{cor}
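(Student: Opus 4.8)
The plan is to deduce Corollary \ref{cor:en_esn_1-units} directly from Theorem \ref{thm:es_n_oc} together with the congruence $e_n \equiv \es_n \equiv 1 \pmod{p^2}$ from Lemma \ref{elem_cong_eis_p2}. By Theorem \ref{thm:es_n_oc} we have $e_n, \es_n \in M_0(\Z_p, \ge \frac{2}{3}\cdot\frac{p}{p+1})$, and the content of the corollary is merely to package this membership, combined with the mod $p^2$ congruence, as a $1$-unit statement in an appropriately large coefficient ring.

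First I would fix $u$ with $0 \le u < \frac{2}{3}\cdot\frac{p}{p+1}$ and choose a finite extension $K/\Q_p$ with ring of integers $O$ large enough to contain an element $r$ of valuation exactly $u$ (or, if $u$ is irrational or one prefers, slightly larger than $u$ but still below $\frac{2}{3}\cdot\frac{p}{p+1}$); the point of ``sufficiently large'' is exactly to have such an $r$ available, and for any $r' \in O$ with $v_p(r') \le u$ one then argues with this fixed $r$. By Proposition \ref{prop:overconv_greater_than_or_equal_rho}, the membership $e_n \in M_0(\Z_p, \ge \frac{2}{3}\cdot\frac{p}{p+1})$ implies $e_n \in M_0(O, r')$ for every such $r'$, since $0 \le v_p(r') \le u < \frac{2}{3}\cdot\frac{p}{p+1}$; the same holds for $\es_n$.

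Next I would write $e_n = 1 + (e_n - 1)$ and observe that $e_n - 1 \in M_0(O, r')$ as well (it is the difference of $e_n$ and the constant $1$, both in that ring). The congruence $e_n \equiv 1 \pmod{p^2}$ in $q$-expansions, via Proposition \ref{prop:congr_in_q-exp_implies_congr_for Katz_exp}, shows that every Katz coefficient $b_t$ of $e_n - 1$ satisfies $v_p(b_t) \ge 2$; in particular $e_n - 1$ lies in $p^2 M_0(O, r')$, or at the very least in $p \cdot M_0(O,r')$, so we may write $e_n - 1 = a \cdot f$ with $f \in M_0(O, r')$ and $a \in O$ with $v_p(a) > 0$ (e.g. $a = p^2$). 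This is exactly the definition of a $1$-unit in $M_0(O, r')$ as recalled after the Coleman--Wan theorem. The identical argument applies to $\es_n$ using $\es_n \equiv 1 \pmod{p^2}$, completing the proof.

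There is no real obstacle here — the corollary is essentially a translation of the second part of Theorem \ref{thm:es_n_oc} into the language of $1$-units, with the only mild point being the bookkeeping of ``sufficiently large $K$'' so that an element of the prescribed valuation exists and Proposition \ref{prop:overconv_greater_than_or_equal_rho} can be invoked. One should just be careful that the $1$-unit assertion is required uniformly for all $r'$ with $v_p(r') \le u$, but since $M_0(O, r) \subseteq M_0(O, r')$ is not the direction we need, one instead invokes Proposition \ref{prop:overconv_greater_than_or_equal_rho} separately for each $r'$ (all of which have valuation strictly below $\frac{2}{3}\cdot\frac{p}{p+1}$), so no single containment between the various $M_0(O,r')$ is needed.
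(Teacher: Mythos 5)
There is a genuine gap at the central step. From $e_n \equiv 1 \pmod{p^2}$ and Proposition \ref{prop:congr_in_q-exp_implies_congr_for Katz_exp} you correctly conclude that the Katz coefficients $b_t$ ($t\ge 1$) of $e_n-1$ satisfy $v_p(b_t)\ge 2$, but the inference ``hence $e_n-1\in p^2M_0(O,r')$, or at least $p\,M_0(O,r')$'' is false in general. Membership of $(e_n-1)/p^m$ in $M_0(O,r')$ requires $v_p(b_t)\ge m+t\,v_p(r')$ for every $t$: dividing by $p^m$ raises the required lower bound by $m$ uniformly in $t$, whereas the information available is only $v_p(b_t)\ge\max\left(2,\rho t\right)$ with $\rho=\frac23\cdot\frac{p}{p+1}$. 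When $v_p(r')$ is close to $\rho$ this fails at intermediate indices: e.g.\ for $p=5$, $\rho=\frac59$, $v_p(r')=\frac12$, taking $m=1$ one needs $v_p(b_3)\ge\frac52$ while only $v_p(b_3)\ge 2$ is known (and $m=2$ already fails at $t=1$). So the proposed factorization $e_n-1=a\cdot f$ with $a=p^2$ (or $a=p$) and $f\in M_0(O,r')$ is not justified.

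The paper's proof avoids this by not using the congruence at all: it takes $a\in O$ of \emph{small} positive valuation, $0<v_p(a)\le\frac12(\rho-u)$, and sets $f:=a^{-1}(e_n-1)$; then $v_p(b_t/a)-t\,v_p(r)\ge(\rho-u)\left(t-\frac12\right)\ge 0$ for all $t\ge1$ (and tends to infinity), so $f\in M_0(O,r)$ and $e_n=1+af$ is a $1$-unit. This is also the true reason the statement is only claimed for ``sufficiently large'' $O$: one needs the auxiliary element $a$ of arbitrarily small positive valuation, not (as you suggest) an element of valuation exactly $u$ --- the corollary quantifies over all $r$ with $v_p(r)\le u$ and never needs such an $r$ to exist. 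Your reduction to Theorem \ref{thm:es_n_oc} and Proposition \ref{prop:overconv_greater_than_or_equal_rho} up to that point is fine; only the final ``divide by a power of $p$'' step needs to be replaced by ``divide by a small-valuation $a$''.
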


\begin{proof} Put $\rho := \frac{2}{3}\cdot \frac{p}{p+1}$. The proof of the corollary is the same for $e_n$ and $\es_n$, so let us just consider $e_n$. As $e_n \in M_0\left( \Z_p,\ge \frac{2}{3} \cdot \frac{p}{p+1} \right)$ by Theorem \ref{thm:es_n_oc}, the Katz expansion of $e_n$ has form
$$
e_n = 1 + \sum_{i\ge 1} \frac{b_i}{E_{p-1}^i}
$$
where the $b_i \in B_i(\Z_p)$ satisfy $v_p(b_i) \ge \rho \cdot i$ for all $i$.

Choose $K$ and $O$ sufficiently large so that we have an element $a\in O$ with $0 < v_p(a) \le \frac{1}{2} \cdot (\rho - u)$. Define $b_i' := a^{-1} b_i$ for $i\ge 1$. If then $r\in O$ with $v_p(r) \le u$ we find
$$
v_p(b_i') - iv_p(r) \ge (\rho - u)(i-\frac{1}{2})
$$
for $i\ge 1$ which shows that $v_p(b_i') - iv_p(r) \ge 0$ for $i\ge 1$ as well as $v_p(b_i') - iv_p(r) \rightarrow \infty$ for $i\rightarrow \infty$. Thus,
$$
f := \sum_{i\ge 1} \frac{b_i'}{E_{p-1}^i}
$$
is seen to be the Katz expansion of an element of $M_0(O,r)$, and as we have $e_n = 1 + a\cdot f$ with $v_p(a) > 0$, we are done.
\end{proof}

\begin{proof}[Proof of Theorem \ref{thm:V(E)/E_oc}] Let $k\ge 4$ be an integer divisible by $p-1$ and put $n:=\frac{k}{p-1}$.

By Corollary \ref{cor:en_esn_1-units} we see that there exists a sequence of finite extensions $K_t/\Q_p$ with rings of integers $O_t$ as well as elements $s_t\in O_t$ such that $e_n$ is a $1$-unit in $M_0(O_t,s_t)$ for each $t$, and such that $v_p(s_t)$ converges to $\frac{2}{3} \cdot \frac{p}{p+1}$ from below. Enlarging the $K_t$ if necessary we may assume that each $s_t$ is a $p$th power in $O_t$, say $s_t = r_t^p$. We then have:
\begin{itemize}
\item $e_n$ is a $1$-unit in $M_0(O_t,r_t^p)$ for each $t$,
\item the values $v_p(r_t)$ converge to $\frac{2}{3} \cdot \frac{1}{p+1}$ from below.
\end{itemize}

Obviously, we may further assume that $(O_t,r_t)=(\Z_p,1)$ for one $t$.

By the well-known property of the Frobenius operator $V$ that it maps $M_0(O_t,r_t^p)$ to $M_0(O_t,r_t)$, cf.\ \cite[Theorem 3]{katz_padic}, it follows that $V(e_n)$ is a $1$-unit in $M_0(O_t,r_t)$ for each $t$. By the definition of $e_n$ (Definition \ref{def:en_esn}), we have the identity
$$
\frac{V(E_k)}{E_k} = \frac{V(e_n)}{e_n} \cdot \left( \frac{V(E_{p-1})}{E_{p-1}} \right)^n ,\leqno{(\ast)}
$$
and we then see that the Coleman--Wan theorem on $V(E_{p-1})/E_{p-1}$ implies that $V(E_k)/E_k$ is a $1$-unit in $M_0(O_t,r_t)$ for each $t$.

As we had $(O_t,r_t)=(\Z_p,1)$ for one $t$, we now also know that $V(E_k)/E_k \in M_0(\Z_p,1)$.

If we combine the information above with Proposition \ref{prop:overconv_greater_than_or_equal_rho} we can now conclude that
$$
\frac{V(E_k)}{E_k} \in M_0\left(\Z_p, \ge \frac{2}{3} \cdot \frac{1}{p+1} \right).
$$

On the other hand, by Theorem \ref{thm:es_n_oc} we have $pe_n \in M_0\left(\Z_p,\ge \frac{p}{p+1}\right)$. By Proposition \ref{prop:overconv_greater_than_or_equal_rho} we can choose a sequence of finite extensions $K_t/\Q_p$ with rings of integers $O_t$ and elements $r_t\in O_t$ such that $pe_n\in M_0(O_t,r_t^p)$, and where $v_p(r_t^p)$ converges to $\frac{p}{p+1}$ from below so that $v_p(r_t)$ converges to $\frac{1}{p+1}$ from below.

As above, the property of the Frobenius operator then implies that $pV(e_n) \in M_0(O_t,r_t)$ for each $t$. But as $v_p(r_t)<\frac{1}{p+1} < \frac{2}{3} \cdot \frac{p}{p+1}$, Corollary \ref{cor:en_esn_1-units} gives us that $e_n$ is a $1$-unit in $M_0(O_t,r_t)$.

We conclude that $p\cdot \frac{V(e_n)}{e_n} \in M_0(O_t,r_t)$ for each $t$, and then the identity $(\ast)$ above, combined again with the Coleman--Wan theorem, implies
$$
p\cdot \frac{V(E_k)}{E_k} \in M_0(O_t,r_t)
$$
for each $t$. As we know from before that $V(E_k)/E_k \in M_0(\Z_p,1)$, the first statement of Theorem \ref{thm:V(E)/E_oc} now follows from Proposition \ref{prop:overconv_greater_than_or_equal_rho}.
\end{proof}

\begin{remark}\label{rem:V(E_k)/E_k_1_unit} As the above proof shows, the function $V(E_k)/E_k$ ($k\ge 4$, divisible by $p-1$) is a $1$-unit in $M_0(O,r)$ if $v_p(r) < \frac{2}{3} \cdot \frac{1}{p+1}$ and $O$ is sufficiently large.
\end{remark}

We are now ready to prove Theorem \ref{thm:es}, but prefer to prove Theorems \ref{thm:es} and \ref{thm:e_n_oc_conditional} together as the arguments are similar. However, some additional preparations are necessary for the proof of Theorem \ref{thm:e_n_oc_conditional}. These will be done in the following two subsections.

\subsection{Some abstract algebra}\label{subsec:algebra} Let $R$ be an $\F_{p}$-algebra. We define a shift operator $D$ on the $R$-module of sequences in $R$: if $(s_n)_{n \geq 0}$ is a sequence of elements in $R$, then
$$
D(s) := (s_{n+1})_{n\geq 0}.
$$

Thus, the sequences of elements in $R$ that satisfy linear recurrences are precisely the sequences $s = (s_n)_{n\geq 0}$ for which there exists a monic polynomial $P \in R[X]$ such that
$$
P(D)(s) = 0.
$$

\begin{lem}[Deep recurrences]\label{deep_rec} Let $s = (s_n)_{n\geq 0}$ be a sequence of elements in $R$ satisfying a linear recurrence
$$
s_n = \sum_{i=1}^r A_{i} s_{n-i}
$$
for some $r\in\N$ and all $n \ge r$.

Then for any $t \in \Z_{\geq 0}$ and all $n \in \Z_{\geq rp^t}$, the sequence $s$ also satisfies the linear recurrence
$$
s_n = \sum_{i=1}^r A_{i}^{p^t} s_{n-ip^t}.
$$
\end{lem}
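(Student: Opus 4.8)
The key observation is that over an $\F_p$-algebra $R$ the Frobenius $x\mapsto x^p$ is a ring homomorphism, and consequently commutes with the shift operator $D$ in a twisted way: if we write $\phi$ for the map on sequences sending $(u_n)_{n\ge 0}$ to $(u_n^p)_{n\ge 0}$, then $\phi\circ D = D^p\circ\phi$ (both sides send $(u_n)$ to $(u_{n+1}^p)$), and more generally $\phi$ is additive and multiplicative on sequences. So the plan is: rephrase the hypothesis as $P(D)(s)=0$ for the monic polynomial $P(X)=X^r-\sum_{i=1}^r A_iX^{r-i}$, apply $\phi$, and read off the conclusion.

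First I would set $t=1$ and handle the inductive step afterwards. Consider the shifted sequence $s^{(j)} := D^j(s) = (s_{n+j})_{n\ge 0}$. The recurrence says that for every $j\ge 0$,
$$
s^{(r+j)} = \sum_{i=1}^r A_i\, s^{(r+j-i)}
$$
as an identity of sequences in $R$ (this is just the hypothesis re-indexed; note it holds for all $n\ge 0$ in each shifted sequence because the original holds for all $n\ge r$). Now apply $\phi$ to this identity. Since $\phi$ is a ring homomorphism on the sequence ring and $\phi(s^{(m)}) = \phi(D^m(s)) = D^{mp}(\phi(s))$, we obtain
$$
D^{(r+j)p}(\phi(s)) = \sum_{i=1}^r A_i^{p}\, D^{(r+j-i)p}(\phi(s)).
$$
Reading off the $n$-th entry: $(\phi(s))_{n+(r+j)p} = \sum_{i=1}^r A_i^p (\phi(s))_{n+(r-i+j)p}$, i.e. $s_{m}^p = \sum_{i=1}^r A_i^p s_{m-ip}^p$ whenever $m = n+(r+j)p$ with $n,j\ge 0$, that is, for all $m\ge rp$. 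But $s_m^p = s_m^p$ is not quite what we want — we want $s_m = \sum A_i^p s_{m-ip}$ without the $p$-th powers. The point is that the displayed identity is literally $(s_m)^p = \big(\sum_{i=1}^r A_i^p s_{m-ip}\big)^{?}$; more carefully, $\phi$ applied to the sequence identity $s^{(r+j)} = \sum_i A_i s^{(r+j-i)}$ gives $(s^{(r+j)})^{\phi} = \sum_i A_i^p (s^{(r+j-i)})^{\phi}$ where exponent $\phi$ means entrywise $p$-th power — so entrywise this says $s_{n+r+j}^p = \sum_i A_i^p s_{n+r+j-i}^p$. That still has $p$-th powers everywhere. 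I would instead argue directly at the level of polynomial operators: from $P(D)(s)=0$, raising to the $p$-th power inside the $\F_p$-algebra of operators (which is commutative in $D$, and where coefficients get Frobenius-twisted) gives $P^{(p)}(D^p)(s^{\phi})$... — this needs care, and here is where I would be most careful.

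The clean route, which I believe is the intended one, is: work with the $R$-module $V$ of sequences, note $P(D)(s)=0$, and apply the additive-multiplicative map $\phi\colon V\to V$. Because $R$ has characteristic $p$, for any polynomial $Q(X)=\sum c_\ell X^\ell\in R[X]$ and any sequence $u$ we have $\phi(Q(D)(u)) = \sum c_\ell^p\, \phi(D^\ell(u)) = \sum c_\ell^p\, D^{\ell p}(\phi(u)) = Q^{(p)}(D^p)(\phi(u))$, where $Q^{(p)} := \sum c_\ell^p X^\ell$. Hence $0 = \phi(P(D)(s)) = P^{(p)}(D^p)(\phi(s))$. Now $P^{(p)}(X) = X^r - \sum_{i=1}^r A_i^p X^{r-i}$, so $P^{(p)}(D^p) = D^{rp} - \sum_{i=1}^r A_i^p D^{(r-i)p}$, and applying this to $\phi(s)=(s_n^p)$ and reading the $n$-th entry gives $s_{n+rp}^p = \sum_{i=1}^r A_i^p s_{n+(r-i)p}^p$ for all $n\ge 0$. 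Finally, since $\phi$ is injective on... no — $\phi$ need not be injective on a general $\F_p$-algebra. So to conclude $s_{n+rp} = \sum_i A_i^p s_{n+(r-i)p}$ I would instead apply $\phi$ to the original recurrence only to deduce the recurrence for $\phi(s)$, and then observe that the statement to be proved is exactly about the sequence $s$ itself, not $\phi(s)$ — meaning the correct formulation forces us to apply $\phi$ to a \emph{different} starting recurrence. Concretely: the hypothesis $P(D)(s)=0$ already gives, for the operator $Q(D) := D^{r} - \sum A_i D^{r-i}$, that $Q(D)$ kills $s$; then $Q(D)^{p}$, expanded in the commutative $R[D]$ and using $\mathrm{char}=p$ to kill cross terms, equals $D^{rp} - \sum_i A_i^p D^{(r-i)p}$ as an operator, and $Q(D)^p(s) = Q(D)^{p-1}(Q(D)(s)) = 0$. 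That gives $\big(D^{rp}-\sum_i A_i^p D^{(r-i)p}\big)(s) = 0$, i.e. $s_{n+rp} = \sum_{i=1}^r A_i^p s_{n+(r-i)p}$ for all $n\ge 0$, which is the $t=1$ case with range $m\ge rp$.

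The main obstacle is precisely identifying $Q(D)^p = D^{rp} - \sum_{i=1}^r A_i^p D^{(r-i)p}$ in $R[D]$: this is the multinomial expansion of $\big(D^r - \sum_{i=1}^r A_i D^{r-i}\big)^p$ in a commutative ring of characteristic $p$, where the Frobenius identity $(x+y)^p = x^p+y^p$ (applied repeatedly, using that $D$ is central) collapses everything to the sum of $p$-th powers of the terms. Once that identity is in hand, the $t=1$ case is immediate from $Q(D)^p(s)=0$, and the general $t$ follows by an obvious induction: the sequence $s$ satisfies $s_m = \sum_{i=1}^r A_i^{p^t} s_{m-ip^t}$ for $m\ge rp^t$, so it satisfies a linear recurrence of "depth" $rp^t$ with coefficients $A_i^{p^t}$ placed at positions $ip^t$ (and zeros elsewhere); applying the $t=1$ case to \emph{this} recurrence — or simply iterating the operator identity $Q(D)^{p^t} = D^{rp^t} - \sum_i A_i^{p^t} D^{(r-i)p^t}$ — yields the statement for $t+1$ with the expected range $rp^{t+1}$. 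I would write the induction as the iteration $Q(D)^{p^{t}} = \big(Q(D)^{p^{t-1}}\big)^p$ and apply the characteristic-$p$ Frobenius expansion once more at each stage, which makes the range bookkeeping transparent: $\deg\big(Q(D)^{p^t}\big) = rp^t$, so $Q(D)^{p^t}(s)=0$ reads off as a valid recurrence for all $m\ge rp^t$, as claimed.
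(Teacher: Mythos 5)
Your final argument --- discard the entrywise-Frobenius detour, observe $P(D)(s)=0$ with $P(D)=D^r-\sum_i A_i D^{r-i}$, hence $P(D)^{p^t}(s)=0$, and collapse $P(D)^{p^t}$ to $D^{rp^t}-\sum_i A_i^{p^t}D^{(r-i)p^t}$ via the Frobenius identity in the commutative characteristic-$p$ ring $R[D]$ --- is exactly the paper's proof, and it is correct. The exploratory false starts you flag yourself are harmless; only the last paragraph is needed.
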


\begin{proof} Let
$$
P(D) = D^r - A_{1} D^{r-1} - \ldots - A_{r-1} D - A_{r}.
$$

Then $P(D)(s) = 0$, and therefore
$$
P(D)^{p^t} (s) = 0.
$$

Since $R$ is an $\F_{p}$-algebra, we have
$$
P(D)^{p^t} = D^{rp^t} - A_{1}^{p^t} D^{(r-1)p^t} - \ldots - A_{r-1}^{p^t} D^{p^t} - A_{r}^{p^t} ,
$$
and the desired recurrence follows.
\end{proof}

We remind the reader of the definition of the ``$p$-adic weight'' $\delta_p(n)$ of an integer $n \in \Z_{\geq 0}$: if $n$ has $p$-adic expansion $n = \sum_{i\geq 0} a_i p^i$ with $a_i \in \{0, 1, \ldots, p-1\}$, we put $\delta_p(n) := \sum_{i\geq 0} a_i$.

\begin{prop}\label{rec_mod_p} Let $A, B$ be formal variables. Consider the sequence $(s_n)_{n\geq 0}$ in $\F_{p}[A,B]$ given by
$$
s_0 = 1, \quad s_n = 0 \quad \mbox{ for } 1 \leq n \leq p,
$$
$$
s_{n} = A s_{n-p} + Bs_{n-p-1} \quad \mbox{ for } n \geq p+1.
$$

For any $n \in \N$, if $\delta_p(n(p-1)) = p-1$ then $s_n = 0$.
\end{prop}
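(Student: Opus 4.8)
The plan is to convert the recurrence into a family of ``deep'' recurrences via Lemma~\ref{deep_rec}, to describe explicitly which integers $n$ satisfy $\delta_p(n(p-1)) = p-1$, and then to run a strong induction on $n$. Writing the relation $s_n = A s_{n-p} + B s_{n-p-1}$ (valid for $n \ge p+1$) in the shape $s_n = \sum_{i=1}^{p+1} A_i s_{n-i}$ with $A_p = A$, $A_{p+1} = B$, and all other $A_i = 0$, Lemma~\ref{deep_rec} gives, for every $t \ge 0$ and every $n \ge (p+1)p^t$,
$$
s_n = A^{p^t} s_{n-p^{t+1}} + B^{p^t} s_{n-(p+1)p^t} . \leqno{(\star_t)}
$$

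Next I would record the following characterization: for $n \ge 1$, one has $\delta_p(n(p-1)) = p-1$ if and only if, writing $n = p^\nu n'$ with $p \nmid n'$ and $n' = \sum_{\ell \ge 0} a_\ell p^\ell$ in base $p$, the digits satisfy $a_0 > a_1 \ge a_2 \ge a_3 \ge \cdots$; call such an $n$ a \emph{staircase}. By Legendre's formula $v_p(m!) = (m - \delta_p(m))/(p-1)$ together with $\delta_p(np) = \delta_p(n)$ one gets $\delta_p(n(p-1)) = (p-1)\,v_p\binom{np}{n}$, and by Kummer's theorem $v_p\binom{np}{n}$ is the number of carries when adding $n$ and $n(p-1)$ in base $p$, equivalently the number of borrows in the base-$p$ subtraction $np - n$. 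Inspecting that subtraction, the trailing zeros of $n$ produce no borrow, exactly one (forced) borrow occurs at position $\nu = v_p(n)$, and no later borrow occurs precisely when $a_0 > a_1$ and $a_1 \ge a_2 \ge \cdots$; this yields the characterization, and by hypothesis we may assume $n$ is a staircase.

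The proof is then a strong induction on $n$. For $1 \le n \le p$ we have $s_n = 0$ by definition, so let $n \ge p+1$ be a staircase, $n = p^\nu n'$ with $n' = \sum_{\ell=0}^k a_\ell p^\ell$ and $a_k \ne 0$. If $n' \ne 1$, set $t := k + \nu - 1$; one checks $t \ge 0$ and $n \ge (p+1)p^t$, so $(\star_t)$ applies. Subtracting $p^{t+1}$ from $n$ lowers its top digit by one, and subtracting $(p+1)p^t$ lowers its top two digits by one --- when $k = 0$ the ``second digit from the top'' is the zero digit at position $\nu-1$, and there $a_0 \ge 2$ since $n' \ne 1$ --- and in both cases the staircase inequalities survive (using $a_{\ell-1} \ge a_\ell$), so $n - p^{t+1}$ and $n - (p+1)p^t$ are again staircases, positive, and $< n$. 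If $n' = 1$, so $n = p^\nu$ with $\nu \ge 2$, I would instead apply $(\star_{\nu-2})$: then $n - p^{\nu-1} = (p-1)p^{\nu-1}$ and $n - (p+1)p^{\nu-2} = (p^2-p-1)p^{\nu-2}$ are positive, $< n$, and staircases (normalized digit $p-1$, resp.\ digits $p-1, p-2$). In every case the induction hypothesis --- or, for an argument $\le p$, the base case --- kills both terms on the right of the relevant $(\star)$, whence $s_n = 0$. It is essential that neither reduced argument is $0$: $n - (p+1)p^t = 0$ would force $n = (p+1)p^t$, whose normalized digits $1,1$ are not a staircase, contradicting the hypothesis on $n$; and the switch to $t = \nu-2$ when $n' = 1$ is exactly what avoids $n - p^{t+1} = 0$.

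The main obstacle is the bookkeeping in the inductive step: checking that the two reduced indices are again staircases (and positive) is a short but genuine case analysis, and one must isolate the exceptional case $n = p^{v_p(n)}$, where the naive choice of $t$ fails because then $n < (p+1)p^{v_p(n)-1}$. Pinning down the characterization of $\delta_p(n(p-1)) = p-1$ cleanly is the other point that needs care. (As a cross-check one can instead extract $s_n$ from the generating function $\sum_n s_n x^n = (1-Ax^p)/(1-Ax^p-Bx^{p+1})$ and verify the vanishing of the relevant binomial coefficients by Lucas's theorem, but the induction above is the more economical route.)
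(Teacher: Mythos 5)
Your proof is correct, and its skeleton is the same as the paper's: a strong induction on $n$ in which Lemma~\ref{deep_rec} is used to descend from $s_n$ to $s_{n-p^{t+1}}$ and $s_{n-(p+1)p^t}$ for a suitable $t$ (indeed your choice of $t$, including the exceptional case $n=p^{\nu}$, coincides with the paper's). The difference lies entirely in the bookkeeping used to see that the hypothesis $\delta_p(\,\cdot\,(p-1))=p-1$ descends. You first characterize such $n$ explicitly (via Legendre and Kummer) as ``staircases'' in the base-$p$ digits of $n$, and then verify by a digit-by-digit case analysis that both reduced indices are again staircases; this works, but it costs you the extra characterization step, the case split at $n'=1$, and the (true but glossed-over) verification that $n\ge (p+1)p^t$, which itself needs the staircase inequalities when the leading digit of $n'$ equals $1$. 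The paper instead works directly with the base-$p$ representation of $n(p-1)$: the hypothesis says precisely that $n(p-1)=\sum_{i=1}^{p-1}p^{a_i}$ with $a_1\le\cdots\le a_{p-1}$, and with $t=a_{p-1}-2$ the two reduced indices $n'$, $n''$ satisfy that $n'(p-1)$ and $n''(p-1)$ are the same sum with the top exponent lowered by $1$, respectively $2$ --- still sums of $p-1$ powers of $p$ (which cannot carry), hence still of $p$-adic weight $p-1$, and visibly positive. This makes the descent a one-line computation and eliminates both Kummer's theorem and the exceptional case. Your staircase description of the set $\{n : \delta_p(n(p-1))=p-1\}$ is a nice explicit by-product not present in the paper, but it is not needed for the proposition.
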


\begin{proof} We proceed by induction on $n$. Clearly, $\delta_p(0) \not = p-1$. If $1 \leq n \leq p$, then the $p$-adic expansion of $n(p-1)$ is
$$
n(p-1) = (p-n) + (n-1)p
$$
and hence $\delta_p(n(p-1)) = p-1$. By definition, $s_n = 0$.

Assume $n \geq p + 1$ is such that $\delta_p(n(p-1)) = p-1$  and that the statement is true up to $n-1$. As $\delta_p(n(p-1)) = p-1$, we can write
$$
n(p-1) = \sum_{i=1}^{p-1} p^{a_i}
$$
where $0\leq a_1 \leq \ldots \leq a_{p-1}$ are nonnegative integers, not necessarily distinct. If we had $a_{p-1} \le 1$ we would deduce $n(p-1) \le p(p-1)$. But since $n\ge p+1$ we have in fact $n(p-1) > p(p-1)$. We must therefore have $a_{p-1} \ge 2$. Let
$$
n' := n - p^{a_{p-1}-1},
$$
$$
n'' := n - (p+1)p^{a_{p-1}-2}
$$

Clearly, $0\le n' < n$ and $0\le n'' < n$. Moreover,
$$
n'(p-1) = n(p-1) - p^{a_{p-1}} + p^{a_{p-1}-1} = \left(\sum_{i=1}^{p-2}p^{a_i}\right) + p^{a_{p-1}-1},
$$
$$
n''(p-1) = n(p-1) - p^{a_{p-1}} + p^{a_{p-1}-2} = \left(\sum_{i=1}^{p-2}p^{a_i}\right) + p^{a_{p-1}-2},
$$
and so $\delta_p(n'(p-1)) = \delta_p(n''(p-1)) = p-1$. Thus, $s(n') = s(n'') = 0$ by the induction hypothesis. Using Lemma \ref{deep_rec} we then obtain
\begin{eqnarray*}
s_n &=& A^{p^{a_{p-1}-2}}s_{n-p\cdot p^{a_{p-1}-2}} + B^{p^{a_{p-1}-2}}s_{n-(p+1)p^{a_{p-1}-2}} \\
&=& A^{p^{a_{p-1}-2}}s_{n'} + B^{p^{a_{p-1}-2}}s_{n''} = 0 .
\end{eqnarray*}
\end{proof}

Consider the algebra $\Q_{p}[y_1, \ldots, y_{p+1}]$ where $y_1, \ldots, y_{p+1}$ are formal variables. Set $x_0 := 1$ and
$$
x_n := \frac{1}{n}\sum_{i=1}^n (-1)^{i-1}x_{n-i}y_i
$$
for $n=1, \ldots, p+1$. For integers $n \ge p+2$, define
$$
y_{n} := \sum_{i=1}^{p+1} (-1)^{i-1}x_i y_{n-i}.
$$

Define a $\Q_{p}$-algebra homomorphism
$$
\Phi: \Q_{p}[y_1, \ldots, y_{p+1}] \rightarrow  \Q_{p}[t_1, \ldots, t_{p+1}]
$$
by
$$
y_n \mapsto pt_n \quad \mbox{ for } 1 \leq n \leq p,
$$
and
$$
y_{p+1} \mapsto t_{p+1}.
$$

\begin{lem}\label{phi_integrality} We have $\Phi(x_n) \in \Z_p[t_1,\ldots,t_{p+1}]$ for $1\le n\le p+1$, and consequently
$$
\Phi(y_n) \in \Z_{p}[t_1, \ldots, t_{p+1}].
$$
for all $n\in\N$.
\end{lem}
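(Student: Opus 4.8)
The plan is to prove the assertion about the $x_n$ by a single induction on $n$, exploiting the one structural feature of $\Phi$ that matters: it multiplies each of $y_1,\dots,y_p$ by $p$ but leaves $y_{p+1}$ untouched. The ``consequently'' statement will then drop out from a second, entirely routine induction using the recurrence that defines $y_n$ for $n\ge p+2$.

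First I would record the trivial facts $\Phi(x_0)=1$, $\Phi(y_i)=pt_i$ for $1\le i\le p$, and $\Phi(y_{p+1})=t_{p+1}$. Applying $\Phi$ to the defining (Newton-type) relation for $x_n$ yields
$$\Phi(x_n)=\frac1n\sum_{i=1}^{n}(-1)^{i-1}\Phi(x_{n-i})\,\Phi(y_i),$$
and I would treat three ranges of $n$, assuming inductively that $\Phi(x_m)\in\Z_p[t_1,\dots,t_{p+1}]$ for all $0\le m<n$. If $1\le n\le p-1$, every $i$ in the sum is $\le p$, so $\Phi(x_n)=\tfrac{p}{n}\sum_{i=1}^n(-1)^{i-1}t_i\,\Phi(x_{n-i})$; since $\gcd(n,p)=1$ we have $\tfrac{p}{n}\in p\Z_p\subseteq\Z_p$, so $\Phi(x_n)\in\Z_p[t_1,\dots,t_{p+1}]$ (indeed in $p\,\Z_p[t_1,\dots,t_{p+1}]$ for $n\ge1$). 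If $n=p$, again all $i\le p$, so $\Phi(x_p)=\tfrac1p\sum_{i=1}^{p}(-1)^{i-1}(pt_i)\,\Phi(x_{p-i})=\sum_{i=1}^{p}(-1)^{i-1}t_i\,\Phi(x_{p-i})$, where the factor $p$ furnished by $\Phi(y_i)$ exactly cancels the $\tfrac1p$, and integrality follows from the previous range together with $\Phi(x_0)=1$. If $n=p+1$, the term $i=p+1$ contributes $\tfrac{(-1)^{p}}{p+1}t_{p+1}\Phi(x_0)=\tfrac{(-1)^p}{p+1}t_{p+1}$ and the terms $1\le i\le p$ contribute $\tfrac{(-1)^{i-1}}{p+1}(pt_i)\Phi(x_{p+1-i})$; since $v_p(p+1)=0$ we have $\tfrac1{p+1}\in\Z_p$, and all remaining factors are integral by the cases already handled, so $\Phi(x_{p+1})\in\Z_p[t_1,\dots,t_{p+1}]$.

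For the final assertion I would induct on $n\in\N$: for $1\le n\le p$, $\Phi(y_n)=pt_n$; for $n=p+1$, $\Phi(y_{p+1})=t_{p+1}$; and for $n\ge p+2$ the definition $y_n=\sum_{i=1}^{p+1}(-1)^{i-1}x_i y_{n-i}$ gives $\Phi(y_n)=\sum_{i=1}^{p+1}(-1)^{i-1}\Phi(x_i)\Phi(y_{n-i})$, where each $\Phi(x_i)\in\Z_p[t_1,\dots,t_{p+1}]$ by the first part and each $\Phi(y_{n-i})\in\Z_p[t_1,\dots,t_{p+1}]$ by the inductive hypothesis, since $1\le n-i\le n-1$ as $1\le i\le p+1\le n-1$.

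There is no genuine obstacle here; the computation is mechanical. The only point worth flagging — and precisely the reason $\Phi$ carries the factor $p$ on $y_1,\dots,y_p$ but not on $y_{p+1}$ — is the bookkeeping at the two borderline values $n=p$ and $n=p+1$: at $n=p$ the $\tfrac1p$ from Newton's recursion is compensated by the $p$'s in $\Phi(y_1),\dots,\Phi(y_p)$, while at $n=p+1$ one instead meets $\tfrac1{p+1}$, which is harmless because $p+1$ is a $p$-adic unit. One simply has to make sure these two cases are organised in the correct order within the induction.
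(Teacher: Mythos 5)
Your proof is correct and follows essentially the same route as the paper's: for $n$ with $1\le n\le p-1$ or $n=p+1$ one uses that $n$ is a $p$-adic unit, for $n=p$ the factor $\tfrac1p$ in Newton's recursion is cancelled by the $p$'s in $\Phi(y_1),\dots,\Phi(y_p)$, and the claim for $\Phi(y_n)$, $n\ge p+2$, then follows by induction from the defining recurrence. The paper merely states this more tersely; your write-up adds no new idea but also omits nothing.
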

\begin{proof} It is enough to check that $\Phi(x_n) \in \Z_{p}[t_1, \ldots, t_{p+1}]$ for all $1 \leq n \leq p+1$. This is clear from the definition of $x_n$ if $1 \le n \le p-1$ since $n$ is then prime to $p$. We also check
$$
\Phi(x_p) = \frac{1}{p}\sum_{i=1}^p(-1)^{i-1}\Phi(y_i) \Phi(x_{p-i}) = \sum_{i=1}^p(-1)^{i-1} t_i \Phi(x_{p-i}) \in \Z_{p}[t_1, \ldots, t_{p+1}].
$$

Thus $\Phi(x_{p+1}) \in \Z_{p}[t_1, \ldots, t_{p+1}]$ as well.
\end{proof}

Denote by $\overline{\quad\cdot\quad}$ the mod $p$ reduction map from $\Z_{p}[t_1, \ldots, t_{p+1}]$.

\begin{lem}\label{phi_mod_p} For $1 \leq n \leq p+1$, we have
$$
\overline{\Phi(x_n)} = \begin{cases}0 \indent \mbox{if } 1 \leq n \leq p-1,\\ \overline{t_p} \indent \mbox{if } n = p, \\ -\overline{t_{p+1}} \indent \mbox{if } n = p+1.
 \end{cases}
$$

Consequently, we have for all $n \ge p+2$,
$$
\overline{\Phi(y_n)} = \overline{t_p}\cdot\overline{\Phi(y_{n-p})} + \overline{t_{p+1}}\cdot\overline{\Phi(y_{n-(p+1)})}.
$$
\end{lem}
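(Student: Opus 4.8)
The plan is to establish the displayed formula for $\overline{\Phi(x_n)}$ directly from the recursion defining the $x_n$, and then to read off the claimed three-term recurrence for $\overline{\Phi(y_n)}$ by applying $\Phi$ to the defining relation $y_n = \sum_{i=1}^{p+1}(-1)^{i-1}x_i y_{n-i}$ ($n\ge p+2$) and reducing modulo $p$. Throughout one uses that $\Phi(y_i) = pt_i$ for $1\le i\le p$, that $\Phi(y_{p+1}) = t_{p+1}$, that $x_0 = 1$, and Lemma \ref{phi_integrality}, which guarantees that all the $\Phi(x_m)$ in sight already lie in $\Z_p[t_1,\ldots,t_{p+1}]$ so that reduction mod $p$ makes sense.

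First I would treat $1\le n\le p-1$. From $n x_n = \sum_{i=1}^n (-1)^{i-1} x_{n-i} y_i$ and $\Phi(y_i) = pt_i$ (all indices $i$ here are $\le p-1 < p$) one gets $n\,\Phi(x_n) = p\sum_{i=1}^n (-1)^{i-1}\Phi(x_{n-i})t_i$. Since $n$ is prime to $p$ and each $\Phi(x_{n-i})\in\Z_p[t_1,\ldots,t_{p+1}]$ by Lemma \ref{phi_integrality}, this forces $\Phi(x_n)\in p\,\Z_p[t_1,\ldots,t_{p+1}]$, i.e.\ $\overline{\Phi(x_n)}=0$. Next, for $n=p$ the factor $1/p$ exactly cancels the $p$ coming from $\Phi(y_i)$, leaving $\Phi(x_p) = \sum_{i=1}^p (-1)^{i-1}\Phi(x_{p-i})t_i$; reducing mod $p$ and using the previous step ($\overline{\Phi(x_{p-i})}=0$ for $1\le i\le p-1$) together with $x_0=1$ and $(-1)^{p-1}=1$ (here $p$ is odd), only the $i=p$ term survives and gives $\overline{\Phi(x_p)} = \overline{t_p}$. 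Finally, for $n=p+1$ the scalar $p+1$ is a $p$-adic unit congruent to $1$ mod $p$, so $\overline{\Phi(x_{p+1})} = \sum_{i=1}^{p+1}(-1)^{i-1}\overline{\Phi(x_{p+1-i})}\,\overline{\Phi(y_i)}$; since $\overline{\Phi(y_i)}=\overline{pt_i}=0$ for $1\le i\le p$, only the $i=p+1$ term remains, yielding $(-1)^p\,\overline{\Phi(x_0)}\,\overline{t_{p+1}} = -\overline{t_{p+1}}$, again using that $p$ is odd.

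For the ``consequently'' clause I would apply $\Phi$ to $y_n = \sum_{i=1}^{p+1}(-1)^{i-1}x_i y_{n-i}$ for $n\ge p+2$, obtaining $\Phi(y_n) = \sum_{i=1}^{p+1}(-1)^{i-1}\Phi(x_i)\Phi(y_{n-i})$, and reduce mod $p$. By the formula just proved, $\overline{\Phi(x_i)}=0$ for $1\le i\le p-1$, so only $i=p$ and $i=p+1$ contribute: the $i=p$ term is $(-1)^{p-1}\,\overline{t_p}\cdot\overline{\Phi(y_{n-p})} = \overline{t_p}\cdot\overline{\Phi(y_{n-p})}$, and the $i=p+1$ term is $(-1)^{p}\,(-\overline{t_{p+1}})\cdot\overline{\Phi(y_{n-p-1})} = \overline{t_{p+1}}\cdot\overline{\Phi(y_{n-p-1})}$, both sign simplifications using that $p$ is odd. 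This is exactly the asserted recurrence.

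The proof involves no real obstacle; the only thing that needs care is the bookkeeping of the alternating signs $(-1)^{i-1}$ against the parity of $p$, and the observation that the scalar denominators $n$ (for $n\le p-1$) and $p+1$ are $p$-adic units while the denominator $p$ in the formula for $x_p$ is precisely cancelled by the $p$ in $\Phi(y_i)$.
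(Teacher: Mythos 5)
Your proposal is correct and is exactly the ``straightforward calculation'' that the paper leaves to the reader: a direct induction on the defining recursion for $x_n$, using that $n$ (for $n\le p-1$) and $p+1$ are $p$-adic units, that the denominator $p$ in $x_p$ cancels the $p$ in $\Phi(y_i)=pt_i$, and that $p$ is odd for the sign bookkeeping. Nothing is missing.
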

\begin{proof} A straightforward calculation.
\end{proof}

\begin{prop}\label{symm_poly_prop} Let $n \in \N$. If $\delta_p(n(p-1)) = p-1$, then
$$
\Phi(y_n) \in p\Z_{p}[t_1, \ldots, t_{p+1}].
$$
\end{prop}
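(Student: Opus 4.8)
The plan is to combine the mod $p$ recurrence from Lemma \ref{phi_mod_p} with the vanishing result in Proposition \ref{rec_mod_p}. First I would observe that the sequence $(\overline{\Phi(y_n)})_{n\geq 0}$ in $\F_p[t_1,\ldots,t_{p+1}]$ — where we set $\overline{\Phi(y_0)} := 1$, consistent with $x_0 = 1$ — almost satisfies exactly the hypotheses of Proposition \ref{rec_mod_p} with the substitution $A \mapsto \overline{t_p}$, $B \mapsto \overline{t_{p+1}}$. Indeed, Lemma \ref{phi_mod_p} gives the recurrence $\overline{\Phi(y_n)} = \overline{t_p}\cdot\overline{\Phi(y_{n-p})} + \overline{t_{p+1}}\cdot\overline{\Phi(y_{n-(p+1)})}$ for $n \geq p+2$, which matches the recurrence $s_n = As_{n-p} + Bs_{n-p-1}$ of that proposition for $n \geq p+1$. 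So the only thing to check is that the initial conditions agree: I need $\overline{\Phi(y_0)} = 1$ and $\overline{\Phi(y_n)} = 0$ for $1 \leq n \leq p$, and also that the recurrence kicks in correctly at $n = p+1$.

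The initial conditions for $1 \leq n \leq p$ require a small separate check. For $1 \leq n \leq p-1$, the variable $y_n$ maps to $pt_n$ under $\Phi$, so $\overline{\Phi(y_n)} = 0$ immediately. For $n = p$, again $y_p \mapsto pt_p$, so $\overline{\Phi(y_p)} = 0$. This handles $1 \leq n \leq p$. At $n = p+1$, the definition gives $y_{p+1} = \sum_{i=1}^{p+1}(-1)^{i-1}x_i y_{p+1-i}$, and I would use Lemma \ref{phi_mod_p} together with the fact that $\overline{\Phi(y_j)} = 0$ for $1 \leq j \leq p$ and $\overline{\Phi(x_i)} = 0$ for $1 \leq i \leq p-1$ to compute that the only surviving terms come from $i = p$ (giving $\overline{t_p}\cdot\overline{\Phi(y_1)} = 0$) and $i = p+1$ (giving $-\overline{t_{p+1}}\cdot\overline{\Phi(y_0)}\cdot(-1)^{p} = \overline{t_{p+1}}\cdot(-1)^{p+1}$, wait — I should be careful with signs here). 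The cleaner route is to note that $\overline{\Phi(y_{p+1})} = \overline{\Phi(t_{p+1})} = \overline{t_{p+1}}$ directly from the definition of $\Phi$, which indeed is consistent with the Proposition \ref{rec_mod_p} recurrence evaluated at $n = p+1$: $s_{p+1} = As_1 + Bs_0 = B$, matching $\overline{t_{p+1}}$. Good — so the identification is $\overline{\Phi(y_n)} = \overline{s_n}|_{A = \overline{t_p}, B = \overline{t_{p+1}}}$ for all $n \geq 0$.

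Once this identification is established, the conclusion is immediate: if $\delta_p(n(p-1)) = p-1$ then Proposition \ref{rec_mod_p} gives $s_n = 0$ in $\F_p[A,B]$, hence $\overline{\Phi(y_n)} = 0$ in $\F_p[t_1,\ldots,t_{p+1}]$, which is precisely the statement that $\Phi(y_n) \in p\Z_p[t_1,\ldots,t_{p+1}]$ (using Lemma \ref{phi_integrality} to know $\Phi(y_n)$ lies in $\Z_p[t_1,\ldots,t_{p+1}]$ in the first place, so that reduction mod $p$ makes sense). I do not expect any genuine obstacle here — the entire proof is the bookkeeping of matching up two recurrences with matching initial data. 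The one place demanding care is verifying the base cases $1 \leq n \leq p+1$ and confirming the recurrence indices line up (the shift between "$n \geq p+1$" in Proposition \ref{rec_mod_p} and "$n \geq p+2$" in Lemma \ref{phi_mod_p} is absorbed by checking $n = p+1$ by hand, as above). So the main — and only — step worth spelling out is the explicit verification that the map $n \mapsto \overline{\Phi(y_n)}$ realizes the sequence $(s_n)$ of Proposition \ref{rec_mod_p} under $A \mapsto \overline{t_p}$, $B \mapsto \overline{t_{p+1}}$.
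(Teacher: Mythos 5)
Your proof is correct and follows the same route as the paper: identify $\overline{\Phi(y_n)}$ with the sequence $s_n$ of Proposition \ref{rec_mod_p} under $A\mapsto\overline{t_p}$, $B\mapsto\overline{t_{p+1}}$ (using Lemma \ref{phi_mod_p} and the definition of $\Phi$ for the base cases and the case $n=p+1$), then invoke that proposition together with Lemma \ref{phi_integrality}. The paper states the identification $s_n=\overline{\Phi(y_n)}$ without spelling out the base-case checks, which you carry out explicitly; otherwise the arguments coincide.
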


\begin{proof} Define a sequence in $\F_{p}[\overline{t_p}, \overline{t_{p+1}}]$ by
$$
s_0 = 1, \quad s_n = 0 \quad \mbox{ for } 1 \leq n \leq p,
$$
$$
s_{n} = \overline{t_p} s_{n-p} + \overline{t_{p+1}}s_{n-(p+1)} \quad \mbox{ for } n \geq p+1.
$$

Using the definition of $\Phi$, of the $y_n$, as well as Lemma \ref{phi_mod_p}, we find that $s_n = \overline{\Phi(y_n)}$ for all $n \in \N$. Applying Proposition \ref{rec_mod_p} we then find $\overline{\Phi(y_n)} = 0$ if $n \in \N$ is such that $\delta_p(n(p-1)) = p-1$.
\end{proof}

\subsection{Integrality of \texorpdfstring{$\T_{p,n}(1)$}{Tnp(1)}}\label{subsec:integrality_of_en}

\begin{prop}\label{Tp_oc} Assume that
$$
\T_{p,n}(1) \in M_0\left(\Z_p, \ge \frac{p}{p+1}\right)
$$
for all $n \in \{1, 2, \ldots, p\}$.

Then
$$
\T_{p,n}(1) \in M_0\left(\Z_p, \ge \frac{p}{p+1}\right)
$$
for all $n \in \N$ such that $\delta_p(n(p-1)) = p-1$.
\end{prop}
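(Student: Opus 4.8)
The plan is to recognize the sequence $\bigl(p\,\T_{p,n}(1)\bigr)_{n\ge1}$ as the sequence of power sums of $p+1$ quantities attached to the $p+1$ subgroups of order $p$, and then to feed this into the abstract formalism of §\ref{subsec:algebra}. The first step is an identity: working on the ordinary locus and applying the modular description of the Hecke operator $T_p$ on classical forms (the formula $T_\ell f(\sh{E},\omega)=\ell^{k-1}\sum_{C}f(\sh{E}/C,\breve{\pi}_C^{\ast}\omega)$, summed over the $\ell+1$ subgroups of order $\ell$, used for $\ell\ne p$ in the proof of Lemma \ref{integrality_lemma}(ii) but still valid at $\ell=p$ as an identity of classical forms) to $f=E_{p-1}^n$ and dividing by $E_{p-1}^n$, one obtains
$$
\T_{p,n}(1)(\sh{E},\omega)=p^{\,n(p-1)-1}\sum_{i=0}^{p}\left(\frac{E_{p-1}(\sh{E}/C_i,\breve{\pi}_i^{\ast}\omega)}{E_{p-1}(\sh{E},\omega)}\right)^{\!n}\qquad(n\ge1),
$$
the sum over \emph{all} $p+1$ subgroups $C_i\subset\sh{E}$ of order $p$, the canonical one included. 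Setting $\gamma_i:=p^{\,p-1}E_{p-1}(\sh{E}/C_i,\breve{\pi}_i^{\ast}\omega)/E_{p-1}(\sh{E},\omega)$ this becomes $p\,\T_{p,n}(1)=\sum_{i=0}^{p}\gamma_i^{\,n}$. The $\gamma_i$ are not individually of level $1$, but their elementary symmetric functions $\sigma_j:=e_j(\gamma_0,\dots,\gamma_p)$ are $p$-adic modular functions of level $1$ (the $C_i$ being permuted among themselves), so $\gamma_0,\dots,\gamma_p$ are the roots of a monic degree-$(p+1)$ polynomial $X^{p+1}-\sigma_1X^{p}+\dots+(-1)^{p+1}\sigma_{p+1}$ over the ring of such functions.

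Next I would transport this to the abstract setting. Since $\bigl(p\,\T_{p,n}(1)\bigr)_n$ is the power-sum sequence of the $\gamma_i$, Newton's identities — which are exactly the recurrences defining the $x_n$ ($n\le p+1$) and $y_n$ ($n\ge p+2$) in §\ref{subsec:algebra} — apply. Define the $\Q_p$-algebra homomorphism $\Psi\colon\Q_p[t_1,\dots,t_{p+1}]\to M_0(K,1)$ by $t_j\mapsto\T_{p,j}(1)$ for $1\le j\le p$ and $t_{p+1}\mapsto p\,\T_{p,p+1}(1)$. Using Lemma \ref{phi_integrality} (so that $\Phi(x_j)\in\Z_p[t_1,\dots,t_{p+1}]$) one then checks, by induction on $n$, that $\Psi(\Phi(x_j))=\sigma_j$ for $1\le j\le p+1$ and $\Psi(\Phi(y_n))=p\,\T_{p,n}(1)$ for all $n\ge1$: for $n\le p$ this is immediate from $\Phi(y_n)=pt_n$, for $n=p+1$ from $\Phi(y_{p+1})=t_{p+1}$, and for $n\ge p+2$ it follows from the recurrence $y_n=\sum_{i=1}^{p+1}(-1)^{i-1}x_iy_{n-i}$ together with Newton's recurrence $p_n=\sum_{i=1}^{p+1}(-1)^{i-1}\sigma_ip_{n-i}$ for the power sums $p_n=p\,\T_{p,n}(1)$.

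To conclude, I would invoke integrality. By hypothesis $\T_{p,j}(1)\in M_0(\Z_p,\ge\frac{p}{p+1})$ for $1\le j\le p$, and $p\,\T_{p,p+1}(1)\in M_0(\Z_p,\ge\frac{p}{p+1})$ holds unconditionally since $\T_{p,p+1}(1)\in\frac1p M_0(\Z_p,\ge\frac{p}{p+1})$ by Proposition \ref{comparing_Tp_en} and Theorem \ref{thm:es_n_oc}; as $M_0(\Z_p,\ge\frac{p}{p+1})$ is closed under multiplication, this forces $\Psi(\Z_p[t_1,\dots,t_{p+1}])\subseteq M_0(\Z_p,\ge\frac{p}{p+1})$. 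Now if $n\in\N$ satisfies $\delta_p(n(p-1))=p-1$, Proposition \ref{symm_poly_prop} gives $\Phi(y_n)\in p\,\Z_p[t_1,\dots,t_{p+1}]$, hence $p\,\T_{p,n}(1)=\Psi(\Phi(y_n))\in p\,M_0(\Z_p,\ge\frac{p}{p+1})$, and dividing by $p$ in $M_0(K,1)$ gives $\T_{p,n}(1)\in M_0(\Z_p,\ge\frac{p}{p+1})$, as desired.

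The hard part will be the very first step: establishing the power-sum identity with the correct power of $p$ and, above all, justifying rigorously that the $\sigma_j$ are bona fide $p$-adic modular functions of level $1$ — the subtlety being that, unlike in Lemma \ref{integrality_lemma}(ii) where $\ell\ne p$ forced the isogenies to be étale, one of the $p+1$ isogenies here is the canonical-subgroup quotient, so the corresponding $\gamma_i$ (namely $p^{\,p-1}V(E_{p-1})/E_{p-1}$) is not a unit. This requires working throughout on the ordinary locus, where all the $\gamma_i$ are defined, and a direct verification on $q$-expansions (near the cusp one has $C_{\mathrm{can}}=\mu_p$ and the remaining subgroups are $\langle\zeta_p^{\,j}q^{1/p}\rangle$, which pins down both the identity and the normalization). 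Once that identity is secured, Steps 2 and 3 are essentially bookkeeping with the dictionary $x_j\leftrightarrow\sigma_j$, $y_n\leftrightarrow p\,\T_{p,n}(1)$ for which §\ref{subsec:algebra} is tailor-made.
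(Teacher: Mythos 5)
Your proposal is correct and follows essentially the same strategy as the paper: write $p\,\T_{p,n}(1)$ as the $n$-th power sum of $p+1$ fixed modular functions, run Newton's identities, and push everything through the formal machinery of \S\ref{subsec:algebra} via exactly the specialization you describe ($t_j \mapsto \T_{p,j}(1)$ for $j \le p$, $t_{p+1}\mapsto p\,\T_{p,p+1}(1)$), concluding with Proposition \ref{symm_poly_prop}. The one point of divergence is the step you flag as the hard part: the paper never touches the moduli-theoretic $T_p$ (and hence never confronts the non-\'etale canonical isogeny), but instead uses the classical identity $p\,T_p(f)(z)=p^{k}f(pz)+\sum_{j=0}^{p-1}f\bigl((z+j)/p\bigr)$ for level-$1$ forms, which gives $p\,T_p(E_{p-1}^n)=\sum_{j=0}^{p}f_j^{\,n}$ with $f_j(z)=E_{p-1}((z+j)/p)$ and $f_p(z)=p^{p-1}E_{p-1}(pz)$ as an identity of \emph{classical} level-$1$ forms; embedding into $M_{n(p-1)}(\Z_p,\ge\frac{p}{p+1})$ and dividing by $E_{p-1}^n$ yields your power-sum identity (your $\gamma_i$ are the $g_j=f_j/E_{p-1}$) with no appeal to test objects, so the subtlety you worry about simply does not arise. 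The unconditional integrality $p\,\T_{p,n}(1)\in M_0(\Z_p,\ge\frac{p}{p+1})$ is then drawn from Lemmas \ref{integrality_lemma} and \ref{suppintlemma}, which is equivalent to your citation of Proposition \ref{comparing_Tp_en} together with Theorem \ref{thm:es_n_oc}.
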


\begin{proof} Recall the definition of $\T_{p,n}(1)$:
$$
\T_{p,n}(1) = \frac{U(E_{p-1}^n) + p^{n(p-1)-1}V(E_{p-1}^n)}{E_{p-1}^n} .
$$

Consider first the classical Hecke operator $T_p$ acting on forms $f$ of weight $k$ and level $1$: for such $f$ we have
$$
p T_p(f)(z) = p^k f(pz) + \sum_{j=0}^{p-1} f\left( \frac{z+j}{p} \right) ,
$$
and so
$$
p T_p(E_{p-1}^n) = \sum_{j=0}^{p} f_j^n \in M_{n(p-1)}(\Z_p)
$$
for $n\in\N$, if we define $f_j(z) := E_{p-1}((z+j)/p)$ for $j=0,\ldots,p-1$, and $f_p(z) := p^{p-1} E_{p-1}(pz)$. We have a natural embedding $M_{n(p-1)}(\Z_p) \hookrightarrow M_{n(p-1)}(\Z_p,\ge \frac{p}{p+1})$ (\cite[Theorem 3.2]{katz_padic} combined with the definition of $M_{n(p-1)}(\Z_p,\ge \frac{p}{p+1})$; see also the remark after the proof.) The action of
$$
pT_p = pU + p^{n(p-1)}V
$$
commutes with this embedding, and so we see that
$$
p\T_{p,n}(1) = \sum_{j=0}^{p} g_j^n
$$
with $g_j := f_j/E_{p-1}$, $j=0,\ldots,p$. Lemmas \ref{integrality_lemma} and \ref{suppintlemma} imply that
$$
p\T_{p,n}(1) = \sum_{j=0}^{p} g_j^n =: \upsilon_n \in M_0\left( \Z_p,\ge \frac{p}{p+1}\right) .
$$

Put $R:=M_0\left( \Z_p,\ge \frac{p}{p+1}\right)$, an integral domain. Our hypothesis implies that $\upsilon_n \in pR$ for $n=1,\ldots,p$, and we are done if we can show $\upsilon_n \in pR$ whenever $n\ge p+2$ with $\delta_p(n(p-1)) = p-1$. To ease notation, for $n=1,\ldots,p$ denote by $\frac{1}{p} \upsilon_n$ the uniquely determined elements $\tau_n\in R$ such that $\upsilon_n = p\tau_n$.

Put $\xi_0 := 1$ and
$$
\xi_n := \frac{1}{n} \sum_{i=1}^{n} (-1)^{i-1} \xi_{n-i} \upsilon_i \in \Q_p[\upsilon_1,\ldots,\upsilon_{p+1}]
$$
for $n=1,\ldots,p+1$. Then Newton's identities relating sums of powers and symmetric polynomials imply that the $\xi_n$ are the elementary symmetric polynomials in $g_0,\ldots,g_p$, and that for $n\ge p+2$,
$$
\upsilon_n = \sum_{i=1}^{p+1} (-1)^{i-1} \xi_i \upsilon_{n-i} .
$$

As in the setup before Lemma \ref{phi_integrality}, let $y_1,\ldots,y_{p+1}$ and $t_1,\ldots,t_{p+1}$ be formal variables, and let $\Phi$ be the $\Q_p$-algebra homomorphism
$$
\Q_p[y_1,\ldots,y_{p+1}] \rightarrow \Q_p[t_1,\ldots,t_{p+1}]
$$
given by $y_n \mapsto pt_n$ for $n=1,\ldots,p$, and $y_{p+1} \mapsto t_{p+1}$. Define also $x_0 := 1$ and $x_n := \frac{1}{n}\sum_{i=1}^n (-1)^{i-1}x_{n-i}y_i$ for $n=1, \ldots, p+1$ as well as $y_n := \sum_{i=1}^{p+1} (-1)^{i-1}x_i y_{n-i}$ for $n\ge p+2$.

We have a commutative diagram of $\Q_p$-algebra homomorphisms
$$
\xymatrix{ \Q_p[y_1,\ldots,y_{p+1}] \ar@{->}[r]^{\Phi} \ar@{->}[d]_{\phi} & \Q_p[t_1,\ldots,t_{p+1}] \ar@{->}[d]^{\psi} \\
\Q_p[\upsilon_1,\ldots,\upsilon_{p+1}] \ar@{->}[r]^{Id} & \Q_p[\upsilon_1,\ldots,\upsilon_{p+1}]
}
$$
where $\phi(y_n) := \upsilon_n$, $n=1,\ldots,p+1$, and $\psi(t_n) := \frac{1}{p} \upsilon_n$ for $n=1,\ldots,p$, $\psi(t_{p+1}) := \upsilon_{p+1}$.

We then see that $\phi(x_n) = \xi_n$, $n=1,\ldots p+1$, and then $\phi(y_n) = \upsilon_n$ for all $n$. Now Proposition \ref{symm_poly_prop} implies that if $n\ge p+2$ with $\delta_p(n(p-1)) = p-1$ then
$$
\upsilon_n = \phi(y_n) = (\psi \circ \Phi)(y_n) \in p\Z_p\left[ \frac{1}{p} \upsilon_1,\ldots, \frac{1}{p} \upsilon_p,\upsilon_{p+1}\right] \subseteq pR ,
$$
and we are done.
\end{proof}

\begin{remark} Strictly speaking, in order to use \cite[Theorem 3.2]{katz_padic} in the above argument, we should technically increase the level to, say, $N=3$. We can do this without a problem throughout the entire argument as the conclusion is ultimately a statement about the valuations of the coefficients in the Katz expansion of $\T_{p,n}(1)$. Proving that statement at a (technically) increased level is sufficient.
\end{remark}

\subsection{Proofs of Theorems \ref{thm:es} and \ref{thm:e_n_oc_conditional}}\label{subsec:es} Let us briefly recall the definition and some properties of $p$-adic Eisenstein series. We use \cite[Section 1.6]{serre_zeta} as our basic reference and will also (largely) follow the notation there.

For $d\in\Z_p^{\times}$ let $\langle d\rangle$ denote the $1$-unit part of $d$, and let $\tau$ denote the composition of reduction modulo $p$ and the Teichm\"uller character. We then have $d = \langle d\rangle \tau(d)$ for $d\in\Z_p^{\times}$. We shall identify the group of $p$-adic characters of $\Z_p^{\times}$ with $\Z_p \times \Z/(p-1)\Z$ where $\kappa = (s,i) \in \Z_p \times \Z/(p-1)\Z$ is identified with the character given by
$$
\kappa(d) = \langle d\rangle^s \tau(d)^i .
$$

We shall specialize the discussion to the cases where $i=0$ and $s\in\N$ which are what we need in the following. For such a character $\kappa = (s,0)$ we have the (non-normalized) $p$-adic Eisenstein series $G_{\kappa}^{\ast}$ with $q$-expansion
$$
G_{\kappa}^{\ast}(q) = a_0 + \sum_{n=1}^{\infty} \sigma_{\kappa-1}^{\ast}(n) q^n
$$
where
$$
\sigma_{\kappa-1}^{\ast}(n) = \sum_{\substack{d\mid n \\ p\nmid d}} \kappa(d) d^{-1} = \sum_{\substack{d\mid n \\ p\nmid d}} d^{s-1} \tau(d)^{-s}
$$
and for the constant term $a_0$ we have
$$
a_0 = \frac{1}{2} \lim_{m\rightarrow \infty} \zeta(1-k_m)
$$
where $(k_m)$ is a sequence of even integers $k_m \ge 4$ such that $k_m \equiv 0 \pmod{p-1}$, $k_m\rightarrow \infty$ in $\R$, and $k_m \rightarrow s$ in $\Z_p$. The Eisenstein series $G_{\kappa}^{\ast}$ is then the limit in $q$-expansions of the classical (non-normalized) Eisenstein series $G_{k_m}$ with $q$-expansions
$$
G_{k_m}(q) = -\frac{B_{k_m}}{2k_m} + \sum_{n=1}^{\infty} \sigma_{k_m-1}(n) q^n
$$
where $\sigma_{k_m-1}(n) = \sum_{d\mid n} n^{k_m-1}$ as usual.

The constant term $a_0$ evaluates to
$$
a_0 = -\frac{1}{2s} \cdot B_{s,\tau^{-s}},
$$
cf.\ e.g.\ \cite[Theorem 5.11]{washington}. As we have
$$
B_{s,\tau^{-s}} \equiv -\frac{1}{p} \pmod{\Z_p}  ,
$$
e.g., \cite[Exercise 7.6]{washington}, we obtain the normalized Eisenstein series $E_{(s,0)}^{\ast} := \frac{1}{a_0} G_{\kappa}^{\ast}$ with $p$-integral $q$-expansion
$$
E_{(s,0)}^{\ast}(q) = 1 - \frac{2s}{B_{s,\tau^{-s}}} \sum_{n=1}^{\infty} \left( \sum_{\substack{d\mid n \\ p\nmid d}} d^{s-1} \tau(d)^{-s} \right) q^n
$$
which is the $p$-adic limit of the equally $p$-adically integral $q$-expansions
$$
E_{k_m}(q) = 1 - \frac{2k_m}{B_{k_m}} \sum_{n=1}^{\infty} \sigma_{k_m-1}(n) q^n
$$
of the normalized, classical Eisenstein series $E_{k_m}$.

\begin{proof}[Proofs of Theorems \ref{thm:es} and \ref{thm:e_n_oc_conditional}] Consider first Theorem \ref{thm:es}. Let $s\in\N$. Choosing a sequence $(k_m)$ of even integers $\ge 4$ as above we have $E_{k_m} \rightarrow \Es_{(s,0)}$ $p$-adically in $q$-expansions, and so we see that we also have
$$
\frac{V(E_{k_m})}{E_{k_m}} \rightarrow \frac{V(\Es_{(s,0)})}{\Es_{(s,0)}}
$$
$p$-adically in $q$-expansions. As the $k_m$ are divisible by $p-1$, Theorem \ref{thm:V(E)/E_oc} implies
$$
\frac{V(E_{k_m})}{E_{k_m}} \in \frac{1}{p} M_0\left( \Z_p,\ge \frac{1}{p+1} \right) \bigcap M_0\left( \Z_p, \ge \frac{2}{3} \cdot \frac{1}{p+1} \right)
$$
for all $m$, and the claims of Theorem \ref{thm:es} follow from Proposition \ref{prop:conv_in_basis_closedness} (ii).

Let us then turn to Theorem \ref{thm:e_n_oc_conditional}. The first statement, about $e_n$, follows by combining Proposition \ref{comparing_Tp_en} with Proposition \ref{Tp_oc}: if we have $e_n \in M_0\left(\Z_p , \geq \frac{p}{p+1}\right)$ for $n=1,\ldots,p$, Proposition \ref{comparing_Tp_en} implies $\T_{p,n}(1) \in M_0\left(\Z_p , \geq \frac{p}{p+1}\right)$ for $n=1,\ldots,p$; but then Proposition \ref{Tp_oc} implies $\T_{p,n}(1) \in M_0\left(\Z_p , \geq \frac{p}{p+1}\right)$ for all $n$ such that $\delta_p(n(p-1)) = p-1$, and then Proposition \ref{comparing_Tp_en} implies $e_n \in M_0\left(\Z_p , \geq \frac{p}{p+1}\right)$ for all such $n$.

The second statement of Theorem \ref{thm:e_n_oc_conditional}, about $V(E_k)/E_k$ for integers $k\ge 4$ with $\delta_p(k) = p-1$ now follows from the statement about $e_n$: as $\delta_p(k) = p-1$ we have $k\equiv 0 \pmod{p-1}$. Putting $n:=k/(p-1)$ we then have $\delta_p(n(p-1)) = p-1$ and so $e_n \in M_0\left(\Z_p , \ge \frac{p}{p+1}\right)$ by the first statement. We can now argue exactly as in the second part of the proof of Theorem \ref{thm:V(E)/E_oc} above, replacing $pe_n$ by $e_n$ everywhere in the argument. That argument will then show
$$
\frac{V(E_k)}{E_k} \in M_0\left(\Z_p , \ge \frac{1}{p+1}\right) .
$$

Turning now to the final statement of Theorem \ref{thm:e_n_oc_conditional}, about $V(\Es_{(s,0)})/\Es_{(s,0)}$, let $s\in\N$ with $\delta_p(s) < p-1$. Choose $t\in\N$ such that $s<p^t$ and consider the sequence
$$
k_m = s + (p-1-\delta_p(s)) \cdot p^{m+t} .
$$

We see that $(k_m)$ is a sequence of even integers $\ge 4$ with $\delta_p(k_m) = p-1$ for all $m$ and so we now know that
$$
\frac{V(E_{k_m})}{E_{k_m}} \in M_0\left(\Z_p , \geq \frac{1}{p+1}\right)
$$
for all $m$. But, $k_m\rightarrow \infty$ in $\R$, and $k_m\rightarrow s$ in $\Z_p$, and so we find that
$$
\frac{V(\Es_{(s,0)})}{\Es_{(s,0)}} \in M_0\left(\Z_p , \ge \frac{1}{p+1}\right)
$$
by the same reasoning as in the proof of Theorem \ref{thm:es}, i.e., by referring back to Proposition \ref{prop:conv_in_basis_closedness} (ii).
\end{proof}

\section{Computations}\label{sec:comp} To verify Condition \ref{condition:e_n_for_small_n} for a specific prime we must study the overconvergence of the modular function $e_n := E_{n(p-1)}/E_{p-1}^n$ for $n=2,\ldots,p$. Specifically, we must verify that $e_n \in M_0(\Z_p,\ge p/(p+1))$ for these $n$. To verify this for a given $n$, it suffices to compute the (finite) Katz expansion of $e_n$ (Proposition \ref{prop:katz_exp_classical_form}) and then check that we have $v_p(b_i) \ge \frac{p}{p+1} \cdot i$ for the coefficients $b_i$ of the expansion. To compute the expansion, we must of course first get a basis for the $\Z_p$-modules $B_i$ of our chosen splittings
$$
M_{i(p-1)}(\Z_p) = E_{p-1} \cdot M_{(i-1)(p-1)}(\Z_p) \oplus B_i(\Z_p) .
$$

At the beginning of section \ref{sec:topology_overconv_mod_fcts}, in order to be able to refer explicitly back to Katz' original definition of these expansions at level $1$ (\cite[Proposition 2.8.1]{katz_padic}), we required the splittings to have arisen by taking invariants of splittings at a higher, auxiliary level. However, an elementary consideration shows that if the coefficients $b_i$ of the expansion of $e_n$ satisfy $v_p(b_i) \ge \frac{p}{p+1} \cdot i$ for all $i$, then if we choose another system of splittings, say with modules $B_i'(\Z_p)$, then $e_n$ has an expansion w.r.t.\ the new system of splittings where the  coefficients satisfy the same lower bounds.

Thus, for the computational verification of Condition \ref{condition:e_n_for_small_n} for a specific $p$, we are free to choose any system of splittings and in particular we may choose one that is suitable for efficient computation.

We now describe the choice of modules $B_i(\Z_p)$ that we have used. We have followed the idea of Lauder \cite{lauder_comp} of utilizing the fact that we have (upper triangular) ``Miller bases'' of spaces of classical modular forms on $\SL_2(\Z)$:

Let $k$ be a non-negative, even integer and put:
$$
d_k := \left\lfloor \frac{k}{12}\right\rfloor + \begin{cases} 1   \indent \mbox{ if } k \not \equiv 2 \pmod{12} \\
0 \indent \mbox{ if } k \equiv 2 \pmod{12},\end{cases}
$$
so that $d_k$ is the dimension of the classical space of modular forms of weight $k$ on $\SL_2(\Z)$. Put also:
$$
\epsilon(k) := \begin{cases} 0 \indent \mbox{ if } k \equiv 0 \pmod{4} \\
1 \indent \mbox{ if } k \equiv 2 \pmod{4}.\end{cases}
$$

One checks that for $j=0,\ldots,d_k-1$ the numbers $a = \frac{k - 12j - 6\epsilon(k)}{4}$ are non-negative integers and that for each of these $j$ the modular form $\Delta^j E_4^a E_6^{\epsilon(k)} \in M_k(\Z)$ has $q$-expansion starting with $q^j$. Thus, these forms form a $\Z$-basis for the free $\Z$-module $M_k(\Z)$.

Specialize now to weights divisible by $p-1$. For a fixed $i\ge 0$ and $j=0,\ldots d_{i(p-1)}-1$ let
$$
g_{i,j} := \Delta^j E_4^a E_6^{\epsilon(i(p-1))}
$$
again with
$$
a = \frac{i(p-1) - 12j - 6\epsilon(i(p-1))}{4} .
$$

Put $B_0(\Z_p) := \Z_p = M_0(\Z_p)$ and denote for $i \ge 1$ by $B_i(\Z_p)$ the $\Z_p$-submodule of $M_{i(p-1)}(\Z_p)$ spanned by
$$
\mathcal{B}_i= \{g_{i,j} \mid~ d_{(i-1)(p-1)} \le j \leq d_{i(p-1)}-1  \}.
$$

Again by the properties of the $q$-expansions of the $g_{i,j}$ it is clear that $\mathcal{B}_i$ is an $\Z_p$-basis for $B_i(\Z_p)$.

\begin{lem} For each $i \in \N$, we have a direct sum decomposition
$$
M_{i(p-1)}(\Z_p) = E_{p-1} \cdot M_{(i-1)(p-1)}(\Z_p) \oplus B_i(\Z_p) .
$$
\end{lem}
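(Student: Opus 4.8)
The statement is a standard consequence of the triangular shape of the Miller basis together with the fact that $E_{p-1}$ has constant term $1$; no passage to an auxiliary level is needed. First I would record the basic facts: for every even $k\ge 0$ the module $M_k(\Z_p)$ embeds into $\Z_p[[q]]$, the ring $\Z_p[[q]]$ is an integral domain, and $E_{p-1}=1+(\text{higher order terms})$, so in particular $E_{p-1}\neq 0$. Hence multiplication by $E_{p-1}$ is an injective $\Z_p$-linear map $M_{(i-1)(p-1)}(\Z_p)\hookrightarrow M_{i(p-1)}(\Z_p)$. Comparing $\Z_p$-ranks, this forces $d':=d_{(i-1)(p-1)}\le d:=d_{i(p-1)}$, so that the index range $d'\le j\le d-1$ defining $\mathcal{B}_i$ is meaningful and $B_i(\Z_p)$ is free of rank $d-d'$ (the degenerate case $d'=d$, in which $B_i(\Z_p)=0$, is then harmless).

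Next I would consider the $\Z_p$-linear map
$$
\Phi\colon M_{(i-1)(p-1)}(\Z_p)\oplus B_i(\Z_p)\longrightarrow M_{i(p-1)}(\Z_p),\qquad (g,h)\longmapsto E_{p-1}g+h .
$$
Since multiplication by $E_{p-1}$ is injective, the asserted direct sum decomposition is precisely the assertion that $\Phi$ is an isomorphism, so this is what I would prove. I would order a $\Z_p$-basis of the source as $g_{i-1,0},\dots,g_{i-1,d'-1},g_{i,d'},\dots,g_{i,d-1}$, take the target basis $g_{i,0},\dots,g_{i,d-1}$, and show that the matrix of $\Phi$ in these bases is upper triangular with $1$'s on the diagonal, hence of determinant $1$.

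The only input needed is the property recorded just before the statement, namely $g_{i,n}\equiv q^n\pmod{q^{n+1}}$ (and likewise for the $g_{i-1,n}$). From this, each image vector $\Phi(g_{i-1,n})=E_{p-1}g_{i-1,n}$ lies in $M_{i(p-1)}(\Z_p)$ and satisfies $E_{p-1}g_{i-1,n}\equiv q^n\pmod{q^{n+1}}$ (using that $E_{p-1}$ has constant term $1$), while $\Phi(g_{i,n})=g_{i,n}$ trivially satisfies the same congruence; so for $n=0,\dots,d-1$ the $n$-th source basis vector maps to some $f_n\in M_{i(p-1)}(\Z_p)$ with $f_n\equiv q^n\pmod{q^{n+1}}$. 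It then remains to observe that any $f\in M_{i(p-1)}(\Z_p)$ with $f\equiv q^n\pmod{q^{n+1}}$ has the form $f=g_{i,n}+\sum_{m>n}e_m g_{i,m}$ with $e_m\in\Z_p$: writing $f=\sum_m e_m g_{i,m}$ with $e_m\in\Z_p$ (possible since the $g_{i,m}$ form a basis) and successively comparing the coefficients of $q^0,q^1,\dots$, the triangular shape forces $e_m=0$ for $m<n$ and $e_n=1$. Applying this to $f_0,\dots,f_{d-1}$ shows the matrix of $\Phi$ is unipotent upper triangular, so $\Phi\in\GL_d(\Z_p)$ and the decomposition follows. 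There is no genuine obstacle here; the two points that deserve (minor) care are the injectivity of multiplication by $E_{p-1}$, which is what makes the two sides of the claimed decomposition have matching $\Z_p$-rank, and the bookkeeping in the triangular change of basis.
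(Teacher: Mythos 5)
Your proof is correct and is essentially the paper's argument: both rest on the triangular $q$-expansion shape of the Miller basis $g_{i,j}\equiv q^j\pmod{q^{j+1}}$ together with the fact that the $q$-expansion of $E_{p-1}$ starts with $1$. The paper phrases it as successively subtracting multiples of the $E_{p-1}g_{i-1,j}$ from a given $f$ until the remainder vanishes to order $d_{(i-1)(p-1)}$ and hence lies in $B_i(\Z_p)$, which is exactly the unipotent change of basis you write down explicitly.
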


\begin{proof} Let $f \in M_{i(p-1)}(\Z_p)$. Contemplating again the properties of the $q$-expan\-sions of the $g_{i-1,j}$ (and the fact that the $q$-expansion of $E_{p-1}$ starts with $1$), we see that there are $c_j\in\Z_p$ for $j=0,\ldots,d_{(i-1)(p-1)}-1$ such that the $q$-expansion of $f-h$ where
$$
h = \sum_{j=0}^{d_{(i-1)(p-1)}-1} c_j E_{p-1} g_{i-1,j}
$$
starts with $q^t$ for some $t\ge d_{(i-1)(p-1)}$.

But then $h\in E_{p-1} \cdot M_{(i-1)(p-1)}(\Z_p)$ and $f-h\in B_i(\Z_p)$. Thus,
$$
M_{i(p-1)}(\Z_p) = E_{p-1} \cdot M_{(i-1)(p-1)}(\Z_p) + B_i(\Z_p) ,
$$
and the sum is seen to be direct again via consideration of $q$-expansions.
\end{proof}

Using these modules $B_i(\Z_p)$ we have verified Condition \ref{condition:e_n_for_small_n} for $5\le p\le 97$.

We used SageMath \cite{sage} for the actual computations. The code used is publicly available, cf.\ \cite{sage_code}.

\subsection{Two numerical examples}\label{subsec:numerical_examples} An inspection of the proofs of the results of this paper will show that the decisive theorem is Theorem \ref{thm:es_n_oc} in the sense that the other theorems ultimately derive from that. Of the two statements of Theorem \ref{thm:es_n_oc}, the first, i.e., that $e_n$ (and $\es_n$) is in $\frac{1}{p} M_0(\Z_p,\ge \frac{p}{p+1})$, is the most precise as the second derives from that (in combination with Lemma \ref{elem_cong_eis_p2}.) We now give an example showing that the factor $\frac{1}{p}$ can not in general be removed:

Let $p=5$ and consider the Eisenstein series $E_{24}$. A computation in PARI/GP, \cite{PARI2}, shows that we have
$$
E_{24} = E_4^6 + c_1 E_4^3 \Delta + c_2 \Delta^2
$$
with
$$
c_1 = - \frac{340364160000}{236364091} ,\quad c_2 = \frac{30710845440000}{236364091} .
$$

Thus, the Katz expansion of $e_{24}$ is
$$
e_{24} = 1 + \frac{b_3}{E_4^3} + \frac{b_6}{E_4^6}
$$
with $b_3 := c_1 \Delta \in B_3(\Z_5)$, $b_6 := c_2 \Delta^2 \in B_6(\Z_5)$. We find $v_5(b_3) = v_5(b_6) = 4$, and so a numerical illustration of Theorem \ref{thm:es_n_oc}: we have $5\cdot e_{24} \in M_0(\Z_5, \ge \frac{5}{6})$. However, the first two terms in the above sum are in $M_0(\Z_5, \ge \frac{5}{6})$ while the third is not; it follows that $e_{24} \not\in M_0(\Z_5, \ge \frac{5}{6})$.

Keeping $p=5$ and considering now $V(E_{24})/E_{24}$ we can compute the expansion of this function in terms of powers of the function
$$
t(z) := \left( \frac{\eta(5z)}{\eta(z)} \right)^6
$$
with $\eta$ the Dedekind $\eta$-function. The function $t$ is a hauptmodul for the group $\Gamma_0(5)$ the $q$-expansion of which begins with $q$. It is computationally simple to compute the beginning of the expansion
$$
\frac{V(E_{24})}{E_{24}} = \sum_{i=0}^{\infty} a_i t^i
$$
where the $a_i$ will be in $\Z_5$. We find the $5$-adic valuations of $a_0,\ldots,a_{10}$ to be this vector: $(0,1,1,3,3,4,4,5,5,6,4)$. By \cite[Corollary 2.2]{loeffler} we know that if we had $V(E_{24})/E_{24} \in M_0(\Z_p,\ge 1/(p+1))$ then we would have $v_5(a_i) \ge i/2$ for all $i$. Since $v_5(a_{10}) = 4$, we conclude that $V(E_{24})/E_{24} \not\in M_0(\Z_p,\ge 1/(p+1))$. Hence the Coleman--Wan theorem on rate of overconvergence of $V(E_{p-1})/E_{p-1}$ does not extend to the same statement about $V(E_k)/E_k$ for all weights $k$ divisible by $p-1$.

However, the beginning of the sequence $v_5(a_i)$ illustrates the first statement of Theorem \ref{thm:V(E)/E_oc}, i.e., that we have $p\cdot V(E_k)/E_k \in M_0(\Z_p,\ge 1/(p+1))$ in general for weights $k\ge 4$ divisible by $p-1$.

\section{Sample applications}\label{sec:literature} As described in the introduction, establishing explicit rates of overconvergence for the $p$-adic modular function $V(\Es_{(1,0)})/\Es_{(1,0)}$, or, perhaps other members of the ``Eisenstein family'', is a crucial tool for studying finer details of the Coleman-Mazur eigencurve.

One may ask why it would matter that this function or other functions can be established to have an explicit rate of overconvergence in contrast with just knowing that it has a positive, but unknown rate. A first answer to this can be found with the quest to find good, explicit lower bounds for the Newton polygon of the $U$ operator on forms of weight $k$. As an example of this one can consider section 5 of Wan's paper \cite{wan} where an explicit lower bound valid for all even weights is worked out for the case that $p\equiv 1\pmod{12}$ and the tame level is $1$. An inspection of the argument will show that this explicit lower bound really depends on the input from the Coleman--Wan theorem, i.e., the information about rate of overconvergence of the function $\frac{V(E_{p-1})}{E_{p-1}}$. For primes $p$ that satisfy Condition \ref{condition:e_n_for_small_n} of the introduction, so that we know by Theorem \ref{thm:e_n_oc_conditional} that
$$
V(\Es_{(1,0)})/\Es_{(1,0)} \in M_0\left( \Z_p,\ge \frac{1}{p+1} \right) ,
$$
it is possible to improve that lower bound. Taking as an example again primes $p\equiv 1\pmod{12}$, one finds for instance that the valuation of the $n$th coefficient of the characteristic series of $U$ acting on forms of level $1$ and even weight $k$ is bounded from below by
$$
\frac{6}{p+1} \cdot (n-1)^2 - n .
$$

For the primes $p=5,7,13$ (that satisfy Condition \ref{condition:e_n_for_small_n}) it is possible to improve these lower bounds even further via a more delicate analysis along the lines of the arguments of \cite{cst} (the method here depends on $X_0(p)$ having genus $0$): for each of these three primes one can improve the lower bound to
$$
\frac{6}{p+1} \cdot n^2 - \left( 1- \frac{6}{p+1} \right) \cdot n - 1,
$$
with the arguments again conditional on the above information on $V(\Es_{(1,0)})/\Es_{(1,0)}$.

Among other things, the thesis \cite{destefano} used this lower bound together with some additional argumentation to prove a number of conditional results such as for example the following.

\begin{theorem} (\cite[Theorem 4.30]{destefano}) Let $p=5$. Given the result on $V(E^{\ast}_{(1,0)})/E^{\ast}_{(1,0)}$ of Theorem \ref{thm:e_n_oc_conditional}, the following holds. There is a $p$-adic analytic family of modular forms
$$
f_k(q) = q + \sum_{n=2}^{\infty} a_n(k) q^n
$$
that, when specialized to integers $k_0\ge 4$ with
$$
k_0\equiv 2 \pmod{4}, ~~ v_5(k_0-10) \le 1, ~~\mbox{and} ~~ v_5(k_0-14)\le 1 ,
$$
gives us a classical, cuspidal eigenform of weight $k_0$ on $\Gamma_0(5)$ and slope
$$
2 + v_5((k_0-10)(k_0-14))
$$
which is the smallest possible slope among such eigenforms of weight $k_0$. Furthermore, $f_{k_0}$ is the unique normalized eigenform of weight $k_0$ on $\Gamma_0(5)$ with this slope.

If $k_0,k_1 \ge 2$ are classical weights like that, we have
\begin{eqnarray*}
&& v_5(a_n(k_0) - a_n(k_1)) \ge v_5(k_0-k_1) + 1 \\
&+& \min \{ 0, 1-v_5((k_0-10)(k_1-10)) \} + \min \{ 0, 1-v_5((k_0-14)(k_1-14)) \} .
\end{eqnarray*}
\end{theorem}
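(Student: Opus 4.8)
The plan is to deduce the statement from Coleman's theory of $p$-adic families of overconvergent modular forms, feeding in the explicit rate of overconvergence of $\mathcal{E} := V(\Es_{(1,0)})/\Es_{(1,0)}$ that is the content of the conditional part of Theorem~\ref{thm:e_n_oc_conditional} (valid for $p=5$ by Proposition~\ref{prop:comp}). Recall from the introduction and from \cite{coleman_banach} that via ``Coleman's trick'' the action of $U$ on forms of weight $k$ is conjugate to the twisted operator $U_{(k)}(\cdot) = U(\mathcal{E}^{k}\cdot)$ on overconvergent modular functions of tame level $1$. Since $\mathcal{E}\in M_0(\Z_5,\ge \frac{1}{6})$, the operators $U_{(\kappa)}$ depend rigid-analytically on the weight character $\kappa$ running over the component of weight space containing the integers $k\equiv 2\pmod{4}$, and one obtains a two-variable Fredholm determinant $P(\kappa,T) = \det(1 - T\,U_{(\kappa)})$ whose coefficients are rigid-analytic functions of $\kappa$ with values in $\Z_5$ and whose slopes at an integer weight $k$ recover, below Coleman's classicality bound, the slopes of $U_5$ on $S_k(\Gamma_0(5))$. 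The first step is to set this up carefully on the $k\equiv 2\pmod{4}$ component.

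Second, I would use the genus-$0$ structure of $X_0(5)$. With the hauptmodul $t(z) = (\eta(5z)/\eta(z))^6$, an overconvergent modular function of growth $\ge \frac{1}{6}$ is an honest power series in $t$ over $\Z_5$ (cf.\ \cite[Corollary 2.2]{loeffler}), so the matrix of $U_{(\kappa)}$ in the basis $\{t^i\}$ can be bounded entrywise along the lines of \cite{cst}; this is exactly what produces the improved Newton-polygon lower bound $\frac{6}{p+1}n^2 - \bigl(1 - \frac{6}{p+1}\bigr)n - 1$, which for $p=5$ reads $n^2 - 1$. The decisive step is then to upgrade this lower bound to an exact determination of the first slope: one computes the first few coefficients of $P(\kappa,T)$ as functions of $\kappa$ and shows that the leading nonconstant coefficient has $5$-adic valuation exactly $2 + v_5((\kappa-10)(\kappa-14))$, so that the Newton polygon of $P(k_0,T)$ has a genuine vertex of slope $2 + v_5((k_0-10)(k_0-14))$ with the next slope strictly larger. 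This single computation yields simultaneously: the existence of a family $f_\kappa$ through the corresponding slope-$(2+v_5((k_0-10)(k_0-14)))$ eigenform at each admissible $k_0$; minimality of that slope among weight-$k_0$ eigenforms on $\Gamma_0(5)$; and uniqueness of the normalized eigenform, since a vertex of horizontal length one forces a one-dimensional generalized $U_5$-eigenspace.

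Third, for the congruence I would use that the $q$-expansion coefficients $a_n(\kappa)$ of $f_\kappa$ are rigid-analytic functions of $\kappa$ on the disk on which the small-slope factor of $P(\kappa,T)$ extracted by Weierstrass preparation stays of horizontal length one; on that disk the family is \'etale over weight space and $a_n(\kappa)\in\Z_5$. A Taylor-expansion estimate then gives $v_5(a_n(k_0) - a_n(k_1)) \ge v_5(k_0-k_1) + 1$, the ``$+1$'' measuring the slope of the relevant factor. The correction terms $\min\{0, 1 - v_5((k_0-10)(k_1-10))\}$ and $\min\{0, 1 - v_5((k_0-14)(k_1-14))\}$ then appear because the radius of analyticity shrinks --- equivalently, the small-slope component of the eigencurve ramifies over weight space --- precisely as $k_0$ or $k_1$ approaches $10$ or $14$ $5$-adically; I would organise this by splitting into cases according to which of $v_5(k_0-10), v_5(k_1-10)$ (and likewise for $14$) is the larger, extracting in each case the relevant local factor of $P$ via a Newton-polygon argument and then applying a standard interpolation bound to that factor.

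The hard part is the second step, passing from the Newton-polygon lower bound to the exact first slope. The bound $n^2-1$ only shows that the first slope at a generic weight is $\ge 2$; to see that it equals $2$ --- and equals $2 + v_5((k_0-10)(k_0-14))$ at the special weights --- one must compute the precise $5$-adic valuation of the leading coefficient of $P(\kappa,T)$ as a function of $\kappa$, which is where the exact value $\frac{1}{p+1}$ of the rate of overconvergence (not merely its positivity) is indispensable and where the genus-$0$ matrix computation must be carried out with care. Identifying why exactly the residues $10$ and $14$ modulo $20$ (and their refinements modulo powers of $5$) are the ones at which the first slope jumps is the crux of the argument.
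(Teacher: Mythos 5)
The paper does not actually prove this statement: it is quoted verbatim from De Stefano's thesis (\cite[Theorem 4.30]{destefano}), and the paper's sole contribution here is to discharge the hypothesis --- the thesis proved the theorem conditionally on knowing $V(E^{\ast}_{(1,0)})/E^{\ast}_{(1,0)} \in M_0(\Z_5,\ge \frac{1}{6})$, and the paper's Theorem \ref{thm:e_n_oc_conditional} together with Proposition \ref{prop:comp} supplies exactly that input for $p=5$. So there is no internal proof to compare yours against; the relevant question is whether your sketch would reconstruct the thesis's argument.

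Your outline is consistent with what the paper indicates about that argument: the explicit overconvergence rate $\frac{1}{p+1}$ feeds, via the genus-$0$ hauptmodul method of \cite{cst} and the spectral expansion bound of \cite{loeffler}, into the Newton polygon lower bound $\frac{6}{p+1}n^2 - (1-\frac{6}{p+1})n - 1$, which for $p=5$ is $n^2-1$; and you correctly identify that the exact rate (not mere positivity) is what makes the bound quantitative. However, as a proof the proposal has a genuine gap, which you yourself flag: everything that is specific to the statement --- that the first slope at weight $k_0\equiv 2\pmod 4$ is \emph{exactly} $2+v_5((k_0-10)(k_0-14))$, that the corresponding Newton polygon vertex has horizontal length one (giving uniqueness), and the precise shape of the congruence for $a_n(k_0)-a_n(k_1)$ including the two $\min\{0,\,1-v_5(\cdot)\}$ correction terms --- is described as the outcome of computations you do not carry out. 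The lower bound $n^2-1$ alone gives only ``first slope $\ge 2$'' generically and says nothing about why the slope jumps precisely at weights congruent to $10$ or $14$ modulo higher powers of $5$. Establishing that requires the explicit matrix computation of $U_{(\kappa)}$ in the $t$-basis and a careful two-variable Newton polygon/Weierstrass analysis, which is the content of the cited thesis and is not reproduced by the outline. As it stands, the proposal is a correct roadmap but not a proof.
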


The thesis \cite{destefano} contains several other such results, also for $p=7$, and other congruence classes of weights modulo $p-1$. Because of Theorem \ref{thm:e_n_oc_conditional} (for $p=5,7$) these statements are now unconditional theorems.

As we mentioned in the introduction, explicit information on overconvergence rates of the types of modular functions considered in this paper seems to have been available only for the primes $2$ and $3$:

For $p=2$, a central result and tool of the paper \cite{buzzard_kilford_2-adic} by Buzzard and Kilford is the statement that, as formulated in our language, we have
$$
\frac{E^{\ast}_k}{V(E^{\ast}_k)} \in M_0\left(\Z_2, \ge \frac{1}{4}\right) ,
$$
for natural numbers $k$ divisible by $4$. Cf.\ \cite[Corollary 9(iii)]{buzzard_kilford_2-adic}.

Similarly, for $p=3$, a central result and tool of Roe's paper \cite{roe_3-adic} can be reformulated in our language as saying that
$$
\frac{E^{\ast}_k}{V(E^{\ast}_k)} \in M_0\left(\Z_3, \ge \frac{1}{6}\right) ,
$$
for even natural numbers $k$.

In \cite{coleman_eisenstein}, Coleman uses these statements to prove for the primes $p=2,3$ the general conjecture that he formulates in that paper concerning analytic continuation of the Eisenstein family.

Further applications of the results of this paper, possibly including further remarks on Coleman's conjecture on the ``Eisenstein family'' (cf.\ \cite{coleman_eisenstein}) will be reported on elsewhere.


\begin{thebibliography}{}
\bibitem{buzzard_kilford_2-adic}
K.\ Buzzard, L.\ J.\ P.\ Kilford: ``The $2$-adic eigencurve at the boundary of weight space'', Compositio Math.\ \textbf{141} (2005) 605--619.

\bibitem{coleman_classical}
R.\ F.\ Coleman: ``Classical and overconvergent modular forms'', Invent.\ Math. \textbf{124} (1996), 215--241.

\bibitem{coleman_banach}
R.\ F.\ Coleman: ``$p$-adic Banach spaces and families of modular forms'', Invent.\ Math.\ \textbf{127} (1997), 417--479.

\bibitem{coleman_eisenstein}
R.\ F.\ Coleman: ``The Eisenstein family'', Proc.\ Amer.\ Math.\ Soc.\ \textbf{141} (2013), 2945--2950.

\bibitem{cst}
R.\ F.\ Coleman, G.\ Stevens, J.\ Teitelbaum: ``Numerical experiments on families of $p$-adic modular forms'', in: D.\ A.\ Buell, J.\ T.\ Teitelbaum (eds.): Computational perspectives on number theory. Proceedings of the conference in honor of A.\ O.\ L.\ Atkin, AMS/IP Studies in Advanced Mathematics \textbf{7} (1998), 143--158.

\bibitem{coleman_mazur_eigencurve}
R.\ F.\ Coleman, B.\ Mazur: ``The eigencurve'', in: Galois representations in arithmetic algebraic geometry (Durham, 1996), London Math.\ Soc.\ Lecture Note Ser.\ \textbf{254} (1998), 1--113.

\bibitem{deligne_rapoport}
P.\ Deligne, M.\ Rapoport: ``Les sch\'emas de modules de courbes elliptiques'', in: W.\ Kuyk, J.-P.\ Serre (eds.): Modular functions of one variable III, Lecture Notes in Math.\ \textbf{350} (1973), 143--316, Springer 1973.

\bibitem{destefano}
D.\ Destefano: ``Investigating slopes of overconvergent forms'', thesis, University of Copenhagen, 2017.

\bibitem{emerton_2-adic}
M.\ J.\ Emerton: ``$2$-adic modular forms of minimal slope'', Thesis, Harvard 1998.

\bibitem{gouvea_p-adic_modular_forms}
F.\ Q.\ Gouv\^{e}a: ``Arithmetic of $p$-adic modular forms'', Lecture Notes in Math.\ \textbf{1394}, Springer 1988.

\bibitem{katz_padic}
N.\ M.\ Katz: ``$p$-adic properties of modular schemes and modular forms'', in: W.\ Kuyk, J.-P.\ Serre: Modular functions of one variable III, Lecture Notes in Math.\ \textbf{350} (1973), 69--190.

\bibitem{lauder_comp}
A.\ G.\ B.\ Lauder: ``Computations with classical and $p$-adic modular forms'', LMS J.\ Comput.\ Math.\ \textbf{14} (2011) 214--231.

\bibitem{loeffler}
D.\ Loeffler: ``Spectral expansions of overconvergent modular functions'', Int.\ Math.\ Res.\ Not.\ IMRN (2007).

\bibitem{PARI2} The PARI~Group, PARI/GP version \texttt{2.11.4}, Univ.\ Bordeaux, 2020.

\url{http://pari.math.u-bordeaux.fr/}.

\bibitem{roe_3-adic}
D.\ Roe: ``The $3$-adic eigencurve at the boundary of weight space'', Int.\ J.\ Number Theory \textbf{10} (2014), 1791--1806.

\bibitem{sage}
SageMath, an open-source mathematics software system,

\url{https://www.sagemath.org/index.html}

\bibitem{sage_code}
SageMath code used in section \ref{sec:comp} is available at this url:

\url{https://sites.google.com/view/nrustom/other/eisenstein-series-and-overconvergence}

\bibitem{serre_zeta}
J.-P.\ Serre: ``Formes modulaires et fonctions z\^{e}ta $p$-adiques'', pp.\ 191--268 in W.\ Kuyk, J.-P.\ Serre: Modular functions of one variable III, Lecture Notes in Math.\ \textbf{350}, Springer 1973.

\bibitem{smithline_thesis}
L.\ M.\ Smithline: ``Slopes of $p$-adic modular forms'', Thesis, Berkeley 2000.


\bibitem{wan}
D.\ Wan: ``Dimension variation of classical and $p$-adic modular forms'', Invent.\ Math.\ \textbf{133} (1998), 449--463.

\bibitem{washington}
L.\ C.\ Washington: ``Introduction to cyclotomic fields'', Springer-Verlag 1982.
\end{thebibliography}
\end{document}